\documentclass[a4paper,11pt]{amsart}


\usepackage{amssymb, manfnt}
\usepackage{hyperref}
\usepackage[normalem]{ulem}
\usepackage{tikz}
\usepackage{booktabs}
\usepackage{multirow}
\usepackage{MnSymbol}
\usepackage{graphicx}
\usepackage{longtable}
\usepackage{amsmath,amscd}
\usepackage{tkz-graph}
\tikzstyle{vertex}=[circle, draw, inner sep=0pt, minimum size=6pt]
\newcommand{\vertex}{\node[vertex]}
\usepackage[all]{xy}
\graphicspath{ {graphs/} }
\usetikzlibrary{decorations.text}

\def\altdb{\vadjust{\vbox to 0pt{\vss\hbox{\kern \hsize
\quad{\dbend}}\kern\baselineskip\kern-10pt}}}

\setlength{\textheight}{23cm} \setlength{\topmargin}{0cm}
\setlength{\textwidth}{16cm} \setlength{\oddsidemargin}{0cm}
\setlength{\evensidemargin}{0cm}




\tikzstyle{mid>}=[decoration={markings, mark=at position 0.5 with {\arrow{>}}}, postaction={decorate}]
\tikzstyle{mid<}=[decoration={markings, mark=at position 0.5 with {\arrow{<}}}, postaction={decorate}]
\tikzstyle{upper>}=[decoration={markings, mark=at position 0.8 with {\arrow{>}}}, postaction={decorate}]
\tikzstyle{upper<}=[decoration={markings, mark=at position 0.8 with {\arrow{<}}}, postaction={decorate}]
\tikzstyle{lower>}=[decoration={markings, mark=at position 0.25 with {\arrow{>}}}, postaction={decorate}]
\tikzstyle{lower<}=[decoration={markings, mark=at position 0.25 with {\arrow{<}}}, postaction={decorate}]
\tikzstyle{tag}=[decoration={markings, mark=at position 0.5 with {\arrow{Rays[n=1]}}}, postaction={decorate}]

\newcommand{\arxiv}[1]{\href{http://arxiv.org/abs/#1}{\tt arXiv:\nolinkurl{#1}}}

\newcommand{\googlebooks}[1]{(preview at \href{http://books.google.com/books?id=#1}{google books})}

\makeatletter
\let\@@pmod\pmod
\DeclareRobustCommand{\pmod}{\@ifstar\@pmods\@@pmod}
\def\@pmods#1{\mkern4mu({\operator@font mod}\mkern 6mu#1)}
\makeatother

\newcommand\FPdim{\operatorname{FPdim}}
\newcommand\Out{\operatorname{Out}}

\newcommand\BrPic{\operatorname{BrPic}}

\newcommand\Aut{\operatorname{Aut}}

\newcommand\Rep{\operatorname{Rep}}

\newcommand\Hom{\operatorname{Hom}}

\newcommand\ad{\operatorname{Ad}}
\newcommand\BrAut{\operatorname{Aut^\text{br}}}

\theoremstyle{plain}
\newtheorem{theorem}{Theorem}[section]
\newtheorem*{theorem*}{Theorem}
\newtheorem*{prop*}{Proposition}
\newtheorem{cor}[theorem]{Corollary}
\newtheorem{lemma}[theorem]{Lemma}
\newtheorem{prop}[theorem]{Proposition}

\theoremstyle{remark}
\newtheorem{rmk}[theorem]{Remark}

\theoremstyle{definition}
\newtheorem{dfn}[theorem]{Definition}
\newtheorem{cons}[theorem]{Construction}

\usepackage{color}


\numberwithin{equation}{section}

\DeclareRobustCommand*{\nicefrac}{\@UnitsNiceFrac}%
\makeatother

\newcommand{\JW}[1]{f^{(#1)}}
\newcommand{\zDiagram}[1][1] { \begin{tikzpicture}[scale = #1,baseline={([yshift=-.5ex]current bounding box.center)}]
\draw [thick] (-.5,-.5) -- (-.5,.5);
\draw [thick] (-.5,.5) -- (.5,.5);
\draw [thick] (.5,.5) -- (.5,-.5);
\draw [thick] (.5,-.5) -- (-.5,-.5);
\node () at (0,0) {$Z$};
\draw [thick] (-.4,-.5) arc [radius = .3, start angle = 0, end angle = -45];
\draw [thick] (.4,-.5) arc [radius = .3, start angle = 180, end angle = 225];
\draw [thick] (-.4,.5) arc [radius = -.3, start angle = 180, end angle = 225];
\draw [thick] (.4,.5) arc [radius = -.3, start angle = 0, end angle = -45]; 
 \end{tikzpicture}
}

\newcommand{\rhoZDiagram}[1][1] { \begin{tikzpicture}[scale = #1,baseline={([yshift=-.5ex]current bounding box.center)}]
\draw [thick] (-.5,-.5) -- (-.5,.5);
\draw [thick] (-.5,.5) -- (.5,.5);
\draw [thick] (.5,.5) -- (.5,-.5);
\draw [thick] (.5,-.5) -- (-.5,-.5);
\node () at (0,0) {$\rotatebox{90}{Z}$};
\draw [thick] (-.4,-.5) arc [radius = .3, start angle = 0, end angle = -45];
\draw [thick] (.4,-.5) arc [radius = .3, start angle = 180, end angle = 225];
\draw [thick] (-.4,.5) arc [radius = -.3, start angle = 180, end angle = 225];
\draw [thick] (.4,.5) arc [radius = -.3, start angle = 0, end angle = -45]; 
 \end{tikzpicture}
}

\newcommand{\Id}[1][1]{\begin{tikzpicture}[scale = #1,baseline={([yshift=-.5ex]current bounding box.center)}]

\draw [thick](0,0) arc [radius  = -.707,start angle = 135, end angle = 225];
\draw [thick](1,0) arc [radius  = .707,start angle = 225, end angle = 135];

\end{tikzpicture}}

\newcommand{\CupCap}[1][1]{\begin{tikzpicture}[scale = #1,baseline={([yshift=-.5ex]current bounding box.center)}]
\draw [thick](0,0) arc [radius  = .707,start angle = 135, end angle = 45];
\draw [thick](0,1) arc [radius  = .707,start angle = -135, end angle = -45];

\end{tikzpicture}}

\newcommand{\LOOP}[1][1]{
\begin{tikzpicture}[scale = #1,baseline={([yshift=-.5ex]current bounding box.center)}]
\draw [thick] (0,0) circle (.5);
\end{tikzpicture}}

\newcommand{\Braid}[1][1]{\begin{tikzpicture}[scale = #1,baseline={([yshift=-.5ex]current bounding box.center)}]

\draw [thick](0,0) -- (1,1);
\draw [thick,draw=white,double=black,double distance=\pgflinewidth,ultra thick] (0,1) -- (1,0);

\end{tikzpicture}}

\newcommand{\xyJonesWenzlIdempotentSixPlusOne}[1]{\xybox{(0,-9.)*\xybox{*\xybox{
(1.5,0);(19.5,18) **i@{-},(3,18);(3,14.4) **@{-}, (3,3.6);(3,0) **@{-},(6,18);(6,14.4) **@{-}, (6,3.6);(6,0) **@{-},(9,18);(9,14.4) **@{-}, (9,3.6);(9,0) **@{-},(15,18);(15,14.4) **@{-}, (15,3.6);(15,0) **@{-},(18,18);(18,14.4) **@{-}, (18,3.6);(18,0) **@{-},(12,16.2) *h+++{\cdots},(12,1.8) *+++{\cdots},(2.25,3.6);(18.75,14.4) **\frm{-}, (10.5,9.) *+++{#1}}!R!(-1.5,0)*\xybox{
(1.5,0);(4.5,18) **i@{-},(3,18);(3,0)  **\crv{(3,5.4)&(3,12.6)},
}!R!(-1.5,0)}}}\newcommand{\xyJonesWenzlIdempotentSeven}[1]{\xybox{(0,-9.)*\xybox{
(1.5,0);(22.5,18) **i@{-},(3,18);(3,14.4) **@{-}, (3,3.6);(3,0) **@{-},(6,18);(6,14.4) **@{-}, (6,3.6);(6,0) **@{-},(9,18);(9,14.4) **@{-}, (9,3.6);(9,0) **@{-},(15,18);(15,14.4) **@{-}, (15,3.6);(15,0) **@{-},(18,18);(18,14.4) **@{-}, (18,3.6);(18,0) **@{-},(21,18);(21,14.4) **@{-}, (21,3.6);(21,0) **@{-},(12,16.2) *h+++{\cdots},(12,1.8) *+++{\cdots},(2.25,3.6);(21.75,14.4) **\frm{-}, (12.,9.) *+++{#1}}}}\newcommand{\xyWenzlRecurrenceLastTermSixPlusOne}[1]{\xybox{(0,-20.)*\xybox{*\xybox{*\xybox{
(1.5,0);(19.5,18) **i@{-},(3,18);(3,14.4) **@{-}, (3,3.6);(3,0) **@{-},(6,18);(6,14.4) **@{-}, (6,3.6);(6,0) **@{-},(9,18);(9,14.4) **@{-}, (9,3.6);(9,0) **@{-},(15,18);(15,14.4) **@{-}, (15,3.6);(15,0) **@{-},(18,18);(18,14.4) **@{-}, (18,3.6);(18,0) **@{-},(12,16.2) *h+++{\cdots},(12,1.8) *+++{\cdots},(2.25,3.6);(18.75,14.4) **\frm{-}, (10.5,9.) *+++{#1}}!R!(-1.5,0)*\xybox{
(1.5,0);(4.5,18) **i@{-},(3,18);(3,0)  **\crv{(3,5.4)&(3,12.6)},
}!R!(-1.5,0)}!U*\xybox{*\xybox{
(1.5,0);(10.5,4) **i@{-},(3,4);(3,0)  **\crv{(3,1.2)&(3,2.8)},
(6,4);(6,0)  **\crv{(6,1.2)&(6,2.8)},
(9,4);(9,0)  **\crv{(9,1.2)&(9,2.8)},
}!R!(-1.5,0)*\xybox{*i\xybox{
(1.5,0);(4.5,4) **i@{-},(3,4);(3,0)  **\crv{(3,1.2)&(3,2.8)},
}}!R!(-1.5,0)*\xybox{
(1.5,0);(10.5,4) **i@{-},(3,4);(3,0)  **\crv{(3,1.2)&(3,2.8)},
(6,4);(9,4)  **\crv{(6,2.2)&(9,2.2)},
(9,0);(6,0)  **\crv{(9,1.8)&(6,1.8)},
}!R!(-1.5,0)}!U*\xybox{*\xybox{
(1.5,0);(19.5,18) **i@{-},(3,18);(3,14.4) **@{-}, (3,3.6);(3,0) **@{-},(6,18);(6,14.4) **@{-}, (6,3.6);(6,0) **@{-},(9,18);(9,14.4) **@{-}, (9,3.6);(9,0) **@{-},(15,18);(15,14.4) **@{-}, (15,3.6);(15,0) **@{-},(18,18);(18,14.4) **@{-}, (18,3.6);(18,0) **@{-},(12,16.2) *h+++{\cdots},(12,1.8) *+++{\cdots},(2.25,3.6);(18.75,14.4) **\frm{-}, (10.5,9.) *+++{#1}}!R!(-1.5,0)*\xybox{
(1.5,0);(4.5,18) **i@{-},(3,18);(3,0)  **\crv{(3,5.4)&(3,12.6)},
}!R!(-1.5,0)}!U}}}

\title{The Brauer-Picard Groups of fusion categories coming from the $ADE$ subfactors}

\author{Cain Edie-Michell}
\address{Cain Edie-Michell\\
Mathematical Sciences Institute\\
Canberra ACT 2601\
Australia}
\email{cain.edie-michell@anu.edu.au}

\allowdisplaybreaks
\begin{document}

\maketitle

\begin{abstract}
We compute the group of Morita auto-equivalences of the even parts of the $ADE$ subfactors, and Galois conjugates. To achieve this we study the braided auto-equivalences of the Drinfeld centres of these categories. We give planar algebra presentations for each of these Drinfeld centres, which we leverage to obtain information about the braided auto-equivalences of the corresponding categories. We also perform the same calculations for the fusion categories constructed from the full $ADE$ subfactors.

Of particular interest, the even part of the $D_{10}$ subfactor is shown to have Brauer-Picard group $S_3 \times S_3$. We develop combinatorial arguments to compute the underlying algebra objects of these invertible bimodules.
\end{abstract}

\section{Introduction}\label{sec:intro}
Let $C$ be a pivotal fusion category generated by a symmetrically self-dual object $X$ of Frobenius-Perron dimension strictly less than $2$. In the unitary setting, a result announced by Ocneanu, and later proved by several authors \cite{MR1193933, MR1145672, MR1313457, MR1929335, MR1308617, MR1617550,MR1976459} shows that $C$ must be one of the $ADET$ fusion categories. These are the categories whose fusion graph for tensoring by $X$ is one of the $ADET$ Dynkin diagrams $A_N$, $D_{2N}$, $E_6$, $E_8$, or $T_n$. Essential to this result was the classical classification of undirected graphs with graph norm less than $2$ (as the Frobenius-Perron dimension of $X$ is defined to be the graph norm of its fusion graph). This reduces the possible fusion rings for $C$ to a subset which makes the classification feasible. 

With the above classification result in mind it is natural to attempt to drop the condition that the generating object $X$ be self-dual, and obtain a full classification of fusion categories generated by an object of Frobenius-Perron dimension less than $2$. If we assume that the category is braided, then such a classification was achieved in \cite{MR1239440}. To attempt the complete classification one might emulate the techniques used in the self-dual case. However one straightaway one arrives at the stumbling block that there is no classification of directed graphs of norm less than $2$. Thus without a braiding assumption a classification result still appears out of hand with the current tools available. However the following unpublished theorem of Morrison and Snyder may allow us to give a partial result.
\begin{theorem}\label{thm:cyclic_ext}
Let $C$ be a unitary pivotal fusion category generated by an object $X$ of dimension less than $2$. If $X\otimes X^* \cong X^*\otimes X$ then $C$ is a unitary cyclic extension of the even part of one of the $ADE$ subfactors.
\end{theorem}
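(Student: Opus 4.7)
The plan is to reduce, via the universal grading, to identifying the adjoint subcategory $C_{\text{ad}}$ with the even part of an $ADE$ subfactor, and then to carry out this identification using the classical classification of graphs of norm less than $2$.

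For the first step, recall that every fusion category $C$ carries a canonical grading $C = \bigoplus_{g \in U(C)} C_g$ whose trivial component $C_e$ is $C_{\text{ad}}$, the fusion subcategory generated by the objects $\{Y \otimes Y^* : Y \text{ simple}\}$, and whose grading group $U(C)$ is generated by the classes of the simple objects. Since $X$ generates $C$, the class $[X]$ generates $U(C)$, so $U(C)$ is cyclic and $C$ is a cyclic extension of $C_{\text{ad}}$. Moreover $C_{\text{ad}}$ is generated by the single self-dual object $X \otimes X^*$.

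For the second step, I would use unitarity to pass to the subfactor world. The commutation hypothesis $X \otimes X^* \cong X^* \otimes X$ is exactly what is needed to endow the canonical Q-system on $X^* \otimes X$ with the left-right symmetry required to produce a genuine hyperfinite $\text{II}_1$ subfactor $N \subset M$ of index $\FPdim(X)^2 < 4$, with $X$ realized as the defining bimodule ${}_N H_M$. The principal graph of this subfactor is the bipartite fusion graph of tensoring by $X$, which has graph norm $\FPdim(X) < 2$; by the classical classification of connected undirected graphs of norm less than $2$ it must be an $ADE$ Dynkin diagram. Thus $N \subset M$ is one of the $ADE$ subfactors, and its $N$-$N$ even part — generated by ${}_N H \otimes_M H^*_N = X \otimes X^*$ — is exactly $C_{\text{ad}}$.

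The principal technical obstacle is this subfactor construction: extracting a genuine subfactor, rather than a more general bimodule planar algebra, from the data of a non-self-dual generator is precisely where the commutation hypothesis is used in an essential way. One must verify carefully that the principal graph of the resulting subfactor coincides with the fusion graph of $X$, so that the norm-less-than-$2$ classification applies, and that its even part really is $C_{\text{ad}}$ rather than only a Morita-equivalent fusion category. Once these checks are in hand, combining the cyclic grading from step one with the $ADE$ identification from step two gives the theorem.
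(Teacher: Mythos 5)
First, a caveat: the paper never proves this statement --- it is quoted as an unpublished theorem of Morrison and Snyder --- so there is no in-paper argument to compare against, and I can only assess your proposal on its own terms. Your overall strategy (universal grading to produce the cyclic extension, then the index-less-than-$4$ subfactor classification to identify the trivial component) is surely the intended one, but you have located the essential use of the hypothesis $X\otimes X^*\cong X^*\otimes X$ in the wrong place, and this leaves a real gap in your Step 1. The assertion there that $C_{\mathrm{ad}}$ is generated by the single object $X\otimes X^*$ does \emph{not} follow merely from $X$ generating $C$: a priori $C_{\mathrm{ad}}=C_e$ contains the simple summands of \emph{every} trivially graded word in $X$ and $X^*$, including non-alternating ones such as $X\otimes X\otimes X^*\otimes X^*$ and even $X^{\otimes n}$ when $[X]$ has order $n$ in $U(C)$, whereas the even part of the subfactor you build in Step 2 is only $\langle X\otimes X^*\rangle$. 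The commutation hypothesis is exactly what closes this gap: it lets you rewrite any word with equal numbers of $X$'s and $X^*$'s as $(X\otimes X^*)^{\otimes k}$ (sort the letters using the isomorphism, then fold), whence $Y\otimes Y^*\in\langle X\otimes X^*\rangle$ for every simple $Y$ and so $C_e=C_{\mathrm{ad}}=\langle X\otimes X^*\rangle$. Without this identification, $C$ would only be a cyclic extension of some category properly containing the even part of your subfactor, and the theorem would not follow.

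Conversely, the step where you claim the hypothesis is essential --- endowing the canonical Q-system with a ``left-right symmetry'' needed to produce a genuine subfactor --- is not where it is needed at all. Any object of a unitary fusion category realized inside the bimodules of the hyperfinite $\mathrm{II}_1$ factor yields a finite-depth subfactor whose $N$-$N$ even part is $\langle X\otimes X^*\rangle$, with no commutation required; generators of the even parts of exotic subfactors furnish standard examples of this construction without any such symmetry. Two smaller corrections to Step 2: the principal graph is not the full fusion graph of $X$ acting on $C$, but only its restriction to the degree-$0$ and degree-$1$ components of the universal grading (harmless, since this is still a connected bipartite graph of norm $\dim X<2$); and the graph-norm classification alone yields the list $A_n$, $D_n$, $E_6$, $E_7$, $E_8$ --- to exclude $D_{\mathrm{odd}}$ and $E_7$ and conclude that your subfactor is one of the $ADE$ subfactors, you need the full Ocneanu-et-al.\ classification of subfactors of index less than $4$ cited in the introduction, not merely the classification of graphs of norm less than $2$.
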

An analogue of this Theorem almost certainly holds for non-unitary categories. The only changes being that now we have to consider non-unitary cyclic extensions of the even parts of the $ADE$ subfactors, along with Galois conjugates.

Given a fusion category $C$ the results of \cite{MR2677836} allow us to classify $G$-graded extensions of $C$. This main ingredient of such a classification is $\BrPic(C)$, the group of Morita auto-equivalences of $C$. As well as being useful in classification problems this group also appears in the study of subfactors. If $C$ is unitary then $\BrPic(C)$ classifies all subfactors whose even and dual even parts are both $C$. The process of computing Brauer-Picard groups of fusion categories is currently receiving attention in the literature by both researchers interested in subfactors \cite{MR2909758,MR3449240}, and fusion categories \cite{MR3210925,1603.04318,MR3373393}.

In light of Theorem~\ref{thm:cyclic_ext}, it becomes a natural question to ask what the Brauer-Picard groups of the even parts of the $ADE$ subfactors (and Galois conjugates) are. As the $ADE$ subfactors are unshaded, they themselves can be thought of as fusion categories. For this paper we will refer to these categories as the $ADE$ fusion categories. The $ADE$ fusion categories contain as a subcategory the even part of the associated subfactor. In fact, the even part is the adjoint subcategory. The main goal of this paper will be to prove the following theorem.
\begin{theorem}\label{thm:BrPic}
Let $C$ be an $ADE$ fusion category (or Galois conjugate). Then the Brauer-Picard group of $C$ (or its adjoint subcategory) is:
\begin{table}[h!]

\centering 
    
    \begin{tabular}{c | l | cc l}
    	\toprule
			\multicolumn{2}{l |}{$C $}      &       $\BrPic(C)$     & $\BrPic(\ad(C))$ \\
	\midrule
	                  $A_N$   & $N=3$ 									    		 &     $\{e\}$ & $\mathbb{Z} /2 \mathbb{Z}$ \\
				& $N=7$											&    $(\mathbb{Z} / 2 \mathbb{Z})^2$ & $D_{2\cdot 4}$ \\
	                                & $N \equiv 0 \pmod 2 $ and $C$ admits a modular braiding 		 &   $\{e\}$   & $\{e\}$ \\
	                                & $N  \equiv 0\pmod 2$ and $C$ doesn't admit a modular braiding & $\mathbb{Z} / 2 \mathbb{Z}$  & $\{e\}$ \\
	                                & $N \equiv 1 \pmod 4 $									 & $\mathbb{Z} / 2 \mathbb{Z}$ &$\mathbb{Z} / 2 \mathbb{Z}$\\
	                                & $N  \equiv 3 \pmod 4$ and $N\neq \{3,7\}$						& $(\mathbb{Z} / 2 \mathbb{Z})^2$ &$(\mathbb{Z} / 2 \mathbb{Z})^2$\\
	                                \midrule
	                 $D_{2N}$ & $N = 5$												& $S_3 $&$(S_3)^2$ \\
	                         	      & $N\neq 5$											& $ \mathbb{Z} / 2 \mathbb{Z}  $&$ (\mathbb{Z} / 2 \mathbb{Z})^2$\\
		      \midrule
	                 $E_6$    &													& $\mathbb{Z} / 2 \mathbb{Z} $ &$\mathbb{Z} / 2 \mathbb{Z}$   \\
	                 \midrule
	                 $E_8$    &													& $\{e\} $ & $\mathbb{Z} / 2 \mathbb{Z}$. \\
				                 
    	\bottomrule
    \end{tabular}
\end{table}

\end{theorem}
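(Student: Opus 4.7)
The central framework I would use is the theorem of Etingof--Nikshych--Ostrik identifying $\BrPic(C)$ with $\BrAut(Z(C))$, the group of braided tensor auto-equivalences of the Drinfeld centre. Thus the problem reduces to understanding the modular tensor categories $Z(C)$ for each of the $ADE$ fusion categories and their Galois conjugates, together with their braided symmetries. The plan is to attack this in three layers: first constrain candidate auto-equivalences using modular data, then use a planar algebra presentation of $Z(C)$ to either realise each candidate or rule it out, and finally translate the result back into $\BrPic(C)$ (and subsequently $\BrPic(\ad(C))$) via the ENO dictionary.

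For the first layer, any braided auto-equivalence of $Z(C)$ must act as a permutation of simple objects preserving the fusion ring, the $T$-matrix (twists), and the $S$-matrix up to a sign. For the $ADE$ centres, the simples and modular data are already well documented (for example through the $A$-series and its orbifolds or de-equivariantizations producing $D$ and $E$), and in each case the automorphism group of the modular data is a finite explicitly computable subgroup of permutations of simples. This gives an upper bound on $\BrAut(Z(C))$. In the unitary $A_N$ and $D_{2N}$ cases this essentially recovers the classical outer automorphism groups of affine $\mathfrak{sl}_2$ modular data, with enhanced symmetries at the exceptional levels corresponding to $N=5$ (the $D_{10}$ case) and $N=7$ for type $A$.

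The second layer is where the planar algebra presentations become essential: not every permutation of simples compatible with the modular data lifts to an actual braided auto-equivalence, and one needs to check obstructions in the associator/braiding data. The strategy is to present $Z(C)$ as a planar algebra with a small set of generators, write each conjectured symmetry as a potential map of generators, and verify the skein relations. This step is what I expect to be the main obstacle, especially in the $D_{10}$ case where the target group $S_3 \times S_3$ is both non-abelian and surprisingly large: one must construct explicit exotic symmetries beyond those obvious from the fusion graph. Here I would look for extra symmetries coming from the exceptional modular invariants at level $16$ (where $D_{10}$ lives via the $\mathfrak{sl}_2$ conformal embedding into $\mathfrak{sp}_4$), since these are known to produce additional Morita equivalences.

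For the final layer, the groups $\BrPic(\ad(C))$ are tied to $\BrPic(C)$ by a short exact sequence coming from the $\ZZ/2\ZZ$-grading of $C$ over $\ad(C)$ (the grading by even vs.\ odd vertices of the Dynkin diagram), and these extension data can be extracted from the action of $\BrAut(Z(\ad(C)))$ on the invertible objects induced by the grading. To finish the $D_{10}$ entry of the table — identifying all $36$ invertible $\ad(D_{10})$-bimodules as module categories given by algebra objects — I would use the combinatorial bijection between invertible bimodules and Lagrangian algebras in $Z(\ad(C))\boxtimes\overline{Z(\ad(C))}$ and classify such algebras by enumerating the possible fusion subcategories of maximal dimension compatible with the braiding, which is a finite search using the fusion and modular data already computed.
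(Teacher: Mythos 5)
Your first two layers are essentially the paper's strategy: invoke $\BrPic(C)\cong\BrAut(Z(C))$, bound $\BrAut(Z(C))$ by fusion-ring automorphisms preserving the twists (the paper first uses planar algebra automorphisms of $Z(C)$ to show there are no non-trivial gauge auto-equivalences, which is what makes that bound valid), and then realise the bound either by explicit planar-algebra symmetries or, when that fails, by exhibiting enough invertible bimodules and running the ENO isomorphism backwards. The paper also has to check that braided auto-equivalences of the product centres (e.g.\ $Z(E_6)\simeq A_{11}\boxtimes A_3^{\text{bop}}$) restrict to the factors, which is implicit in your ``modular data'' step.

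The genuine gap is in your final layer. There is no short exact sequence relating $\BrPic(C)$ and $\BrPic(\ad(C))$ induced by the $\ZZ/2\ZZ$-grading, and the table itself rules one out in both directions: for $A_N$ with $N$ even and $C$ not modular you have $\BrPic(C)=\ZZ/2\ZZ$ but $\BrPic(\ad(C))=\{e\}$, while for $D_{10}$ you have $\BrPic(C)=S_3$ but $\BrPic(\ad(C))=S_3\times S_3$, and for $E_8$ the trivial group sits over $\ZZ/2\ZZ$. So $\BrPic(\ad(C))$ is neither a quotient nor an extension of $\BrPic(C)$ in any uniform way, and this step of your argument would fail for half the table. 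What the paper does instead is compute $Z(\ad(C))$ independently, using the Gelaki--Naidu--Nikshych description of the centre of the trivial component of a graded category (take the centraliser of the distinguished $\Rep(\ZZ/2\ZZ)\subset Z(C)$ and de-equivariantize), produce new planar algebra presentations for $Z(\ad(A_{2N+1}))$ and $Z(\ad(E_6))$, and then rerun the entire upper-bound/realisation machine on these centres from scratch. Relatedly, your proposal to classify the $36$ invertible $\ad(D_{10})$-bimodules via Lagrangian algebras in $Z(\ad(C))\boxtimes\overline{Z(\ad(C))}$ is a legitimate alternative in principle, but the paper instead enumerates the finitely many candidate algebra objects in $\ad(D_{10})$ combinatorially and matches them to braided auto-equivalences through the induction--restriction of $\mathbf{1}$; in any case that classification is not needed to establish the group $S_3\times S_3$, only to describe the bimodules explicitly.
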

We leave the task of working through the rest of the extension theory, and completing the classification of pivotal fusion categories generated by a normal object $X$ of dimension less than $2$ to a later paper.

For the examples we are interested in it turns out to be quite difficult to compute the Morita auto-equivalences explicitly, such as was done in \cite{MR2909758}. Thankfully the isomorphism of groups from \cite{MR2677836}:
\begin{equation*}
\BrPic(C) \cong \BrAut(Z(C)),
\end{equation*}
allows an alternate method to compute the Brauer-Picard group of a fusion category. Here $\BrAut(Z(C))$ is the group of braided auto-equivalences of the Drinfeld centre of $C$. We find the latter group much easier to compute, and hence spend this paper computing Drinfeld centres and braided auto-equivalence groups, rather than a direct computations of invertible bimodules. Although in a few difficult cases we use both descriptions, utilising the relative strengths of both.

Our main tool to compute braided auto-equivalences of categories is via the braided automorphisms of their associated braided planar algebras. It has long been known that there is a one-to-one correspondence between pivotal categories with a distinguished generating object, and planar algebras. This correspondence turns out to be functorial. In particular we have the following:
\begin{prop}\label{prop:PA_to_FC}
Let $P$ a be (braided) planar algebra, and $C_P$ be the associated pivotal (braided) tensor category with distinguished generating object $X$. Let $\operatorname{Aut}^\text{piv}(C_P;X)$ be the group of pivotal (braided) auto-equivalences of $C_P$ that fix $X$ on the nose, and $\Aut(P)$ be the group of (braided) planar algebra automorphisms of $P$. Then there is an isomorphism of groups 
\begin{equation*}
 \Aut(P) \xrightarrow{\sim} \operatorname{Aut}^\text{piv}(C_P;X).
\end{equation*}
\end{prop}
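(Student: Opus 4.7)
My plan is to use the standard dictionary presenting a (braided) planar algebra $P$ as the morphism spaces of a strict pivotal (braided) tensor category $C_P$ whose objects are words in $\{X, X^*\}$, whose hom spaces are the box spaces $P_{n,\pm}$, and whose composition, tensor product, pivotal duality, and (where applicable) braiding are each implemented by a specific elementary planar tangle. For the forward map, given $\phi \in \Aut(P)$ I define $F_\phi$ to be the identity on objects and $\phi$ on each hom space. Because $\phi$ intertwines the action of every planar tangle, specialising to the vertical-stacking tangle, the side-by-side tangle, the cup, the cap, and (in the braided case) the positive crossing shows in turn that $F_\phi$ is functorial, strictly monoidal, pivotal, and braided, with $F_\phi(X) = X$ on the nose. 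Invertibility of $\phi$ gives invertibility of $F_\phi$, and the assignment $\phi \mapsto F_\phi$ is visibly a group homomorphism.

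For the inverse direction, let $F \in \operatorname{Aut}^\text{piv}(C_P;X)$. Strictness of $C_P$ together with $F(X) = X$ force $F$ to fix every object of $C_P$ on the nose, so $F$ restricts to a linear automorphism $\phi_F$ of each $P_n$. Compatibility of $F$ with composition, tensor product, evaluation/coevaluation, and (in the braided case) the braiding translates exactly into the assertion that $\phi_F$ commutes with the action of each of the elementary tangles listed above; since the (braided) planar operad is generated under tangle composition by these elementaries, $\phi_F$ commutes with every planar tangle and is therefore a planar algebra automorphism. Checking that $\phi \mapsto F_\phi$ and $F \mapsto \phi_F$ are mutually inverse, and that both respect composition, then yields the isomorphism.

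The main subtlety will be handling strictness: the hypothesis ``fix $X$ on the nose'' must be interpreted so as to rule out non-trivial tensorators that a general pivotal auto-equivalence carries. I expect to deal with this by a standard strictification argument, replacing $F$ by a monoidally naturally isomorphic representative that is strictly the identity on every object of $C_P$ built from $X$ and $X^*$; because every object of $C_P$ is of this form, this determines a unique strict representative in each equivalence class and so realises $\operatorname{Aut}^\text{piv}(C_P;X)$ as strict pivotal auto-equivalences. Once this matching is in place the remainder of the argument is a direct unpacking of the planar algebra and pivotal fusion category axioms.
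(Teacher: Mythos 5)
The paper does not actually prove this proposition; it obtains it as an immediate corollary of the cited equivalence between (braided) planar algebras and based pivotal (braided) categories, so your direct construction of the two mutually inverse group homomorphisms is essentially an unpacking of that citation rather than a different route. The forward map $\phi \mapsto F_\phi$, and the observation that commuting with the elementary generating tangles (stacking, juxtaposition, cup/cap, crossing) is precisely functoriality, strict monoidality, pivotality and braidedness, are correct. One small point to add: the paper's $C_P$ is the idempotent/matrix completion of the category on tensor powers of $X$ (objects are projections in the box spaces), so you should remark that an auto-equivalence of $C_P$ is determined by its restriction to the full subcategory on tensor powers of $X$, and conversely that any automorphism of that subcategory extends to the completion.

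The one genuine soft spot is your treatment of the tensorator. First, strictness of $C_P$ together with $F(X)=X$ does \emph{not} force $F$ to fix every object on the nose: $F(X^{\otimes 2})$ is only isomorphic to $F(X)\otimes F(X)$ via the tensorator. More importantly, a functor that is the identity on objects can still carry a non-identity tensorator (this is exactly what a gauge auto-equivalence is), so strictifying the object map does not remove the problem. Second, the strictification you propose replaces $F$ by a monoidally naturally isomorphic functor, i.e.\ it computes $\operatorname{Aut}^\text{piv}(C_P;X)$ only up to natural isomorphism; but the paper is explicit that under that reading the map $\Aut(P)\to\operatorname{Aut}^\text{piv}(C_P;X)$ is merely a surjection, with kernel measured by the planar-algebra natural isomorphisms of the appendix, not an isomorphism. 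The statement as intended takes $\operatorname{Aut}^\text{piv}(C_P;X)$ to consist of \emph{based} functors --- $F(X)=X$ together with the identity tensorator on tensor powers of $X$ and preservation of the chosen self-duality pairing on $X$ --- with the group formed under strict equality of such functors. Under that hypothesis no strictification is needed and your two constructions are literally inverse to one another; under any weaker hypothesis the proposition as you would prove it is false. So the fix is to sharpen the interpretation of ``fixes $X$ on the nose'' rather than to strictify.
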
 
This proposition is a direct consequence of \cite[Theorem 2.4]{1607.06041}. While the fact that planar algebra automorphisms only give us pivotal auto-equivalences of the corresponding category may seem restrictive, it turns out that all of the auto-equivalences we care about in this paper are pivotal (although not a priori). In particular the gauge auto-equivalences (auto-equivalences whose underlying functor is the identity) of a pivotal category are always pivotal functors when the category has trivial universal grading group, or when every object of the category is self-dual. These two cases include every category we consider in this paper. While it seems somewhat reasonable to expect gauge auto-equivalences are always pivotal, regardless of extra assumptions on the category, the author was unable to produce a general proof.

Note that every functor that fixes $X$ up to isomorphism is naturally isomorphic to a functor which fixes $X$ on the nose. Thus we can think of $\operatorname{Aut}^\text{piv}(C_P;X)$ as the group of pivotal auto-equivalences that fix $X$ up to isomorphism. Also note that the definition of $\operatorname{Aut}^\text{piv}(C_P;X)$ from \cite{1607.06041} is not up to natural isomorphism of auto-equiavalences, but in fact strict equality! If we consider $\operatorname{Aut}^\text{piv}(C_P;X)$ up to natural isomorphism (which we will for the remainder of this paper unless explicitly stated), then the above map becomes a surjection. We observe that the kernel of this surjection motivates the notion of a natural isomorphism between planar algebra homomorphisms. We study these planar algebra natural isomorphisms in an appendix to this paper. It is shown throughout this paper that non-trivial planar algebra natural isomorphisms exist, and thus the category of planar algebras is actually an interesting 2-category! 

The structure of the paper is as follows:

In section~\ref{sec:centres} we compute the Drinfeld centres of the $ADE$ fusion categories and adjoint subcategories. For the full categories these have already been computed in \cite{MR1815993,AN-Survey}. Using a theorem from \cite{MR2587410} we are able to compute the Drinfeld centres of the corresponding adjoint subcategories from the centres of the full categories. Furthermore we give planar algebra presentations for each of the centres. To our knowledge the planar algebras corresponding to $Z(\ad(A_{2N+1}))$ and $Z(\ad(E_6))$ are new to the literature. In particular the planar algebra for $Z(\ad(A_{3}))$ gives a graphical description of the toric code. For several cases the Drinfeld centre is not generated by a single simple object, but is a product of two modular subcategories. For these cases we describe the planar algebras of the modular subcategories.

In section~\ref{sec:autos} we apply the planar algebras from the previous section to help compute the braided auto-equivalence group of the associated category. Our approach is as follows:
\begin{itemize}
\item Compute the group of braided automorphisms of the associated planar algebra, and hence the group of pivotal braided auto-equivalences of the Drinfeld centre that fix the generating object. In particular this tells us the gauge auto-equivalences of the category. For all our examples there turn out to be no non-trivial gauge auto-equivalences.

\item Compute the fusion ring automorphisms of the category. As we only care about braided auto-equivalences we restrict our attention to fusion ring automorphisms that preserve the $T$ matrix of the category. As the category has no non-trivial gauge auto-equivalences, this gives us an upper bound on the braided auto-equivalence group of the category.

\item Explicitly construct braided auto-equivalences of the category to achieve the upper bound.
\end{itemize}

In section~\ref{sec:BrPic} we develop techniques to deal with the Drinfeld centres that are products of two smaller modular categories. We also combine all the previous results to give a proof of our main theorem.

In section~\ref{sec:D10} we explicitly describe the invertible bimodules over $\ad(D_{10})$ and $\ad(A_7)$. For the other examples the invertible bimodules are already known, and the tensor structure of these bimodules is obvious. For the $\ad(D_{10})$ fusion category however it is not obvious what the $36$ invertible bimodules are, and for the $\ad(A_7)$ categories the tensor structure of the bimodules is not obvious. We combine combinatorial techniques developed in \cite{MR2909758} along with knowledge of the braided auto-equivalences of $Z(C)$ to explicitly describe the invertible bimodules. Of particular note these computations reveal the structure of two subfactors of index $\approxeq 6.411$.

Some of the computations for computing the Brauer-Picard group of the modular $A_N$ categories were performed in \cite{1605.08398}. These computations use similar techniques as in this paper, bounding the size of the Brauer-Picard group by considering modular data of the centre.

Our results appear to be related to the work of Xu in \cite{MR2670925}. This is certainly no coincidence as given a subfactor there is a relationship between the intermediate subfactors and the Brauer-Picard groupoid of the even part. See \cite{MR2909758} for an example of this relationship. However this relationship is not yet completely understood, and hence our work adds to the existing literature.

At an initial glance it also appears that our results overlap with \cite{1703.06543}. However the tensor categories we work with are different to the tensor categories the cited paper deals with. Evidence for this can be seen in the fact that their Brauer-Picard groups are infinite algebraic groups, while ours our finite. Also their results are independent of choice of root of unity, where we see varying behaviour.

\section*{Acknowledgements}
We would like to thank Corey Jones and Scott Morrison for many of helpful conversations. We thank Pinhas Grossman for his discussions on graded module categories. We thank Dave Penneys for helpful conversations about planar algebra natural transformations. We thank Noah Snyder for pointing out to us that the subfactor appearing in Section~\ref{sec:D10} is the GHJ subfactor corresponding to the odd half of $E_7$. This research is supported by an Australian Government Research Training Program (RTP) Scholarship. The author was partially supported by the Discovery Project "Subfactors and symmetries" DP140100732 and "Low dimensional categories" DP160103479. Parts of this paper were written during a visit to the Isaac Newton Institute, who we thank for their hospitality.

\section{Preliminaries} \label{sec:prelim}

\subsection{Fusion Categories and Module Categories}
This subsection briefly recalls the essentials we need on fusion categories and modules categories. For more details on fusion categories see \cite{MR2183279}, and for module categories see \cite{MR1976459}.

\begin{dfn}\cite{MR2183279}
Let $\mathbb{K}$ be an algebraically closed field. A fusion category over $\mathbb{K}$ is
a rigid semisimple $\mathbb{K}$-linear monoidal category $C$ with finitely many isomorphism classes of simple objects and
finite dimensional spaces of morphisms, such that the unit object $\mathbf{1}$ of $C$ is simple.
\end{dfn}

For the purpose of this paper our algebraically closed field will be $\mathbb{C}$. To avoid repetition whenever we refer to a fusion category from now on we implicitly mean a fusion category over $\mathbb{C}$.

The Grothendieck ring of a fusion category $C$ is the ring whose elements are isomorphism classes of objects in $C$. The addition operation is given by direct sum, and the product by tensor product. We will denote the Grothendieck ring of $C$ by $K(C)$. To every object $X$ of a fusion category $C$ we define the Frobenius-Perron dimension of $X$, $\FPdim(X)$, to be the largest real eigenvalue of the matrix for multiplication by $[X]$ in $K(C)$. The Frobenius-Perron dimension extends to a ring homomorphism of $K(C)$ \cite[Theorem 8.2]{MR2183279} and hence is multiplicative over tensor products and additive over direct sums. 

A braided fusion category is a fusion category along with a natural isomorphism called the braiding $\gamma_{X,Y} : X\otimes Y \cong Y\otimes X$ that satisfies certain naturality conditions. For details see \cite{MR1250465}.

A pivotal fusion category is a fusion category along with an isomorphism from the double dual functor to the identity functor.

There are two useful invariants one can compute for a pivotal braided fusion category $C$. They are the $S$ and $T$ matrices, indexed by simple objects of $C$.

\hspace{2.2cm}$S_{X,Y}$ = \begin{tikzpicture}[baseline={([yshift=-.5ex]current bounding box.center)}]
\draw [thick] (0,0) arc [radius = 1, start angle = 0, end angle = 360];
\draw [thick,draw=white,double=black,double distance=\pgflinewidth,ultra thick] (-.43,.707) arc [radius = 1, start angle = 135, end angle = 135+360 - 10] ;
\node (X) at (.25,0) {$X$};
\node (Y) at (-.95,0) {$Y$};
\draw [thick] (-2,0) -- (-2+.15,.15);
\draw [thick] (-2,0) -- (-2.15,.15);

\draw [thick] (1.28,0) -- (1.28+.15,.15);
\draw [thick] (1.28,0) -- (1.28-.15,.15);

\end{tikzpicture},\hspace{2.2cm} $T_{X,X}$ =\begin{tikzpicture}[baseline={([yshift=-.5ex]current bounding box.center)}]

\draw [thick](0,0) arc [radius = .5, start angle = 180, end angle = 180+360 - 35];
\draw [thick] (0,0) -- (0,1.5);
\draw [thick] (0,1) -- (-.15,.85);
\draw [thick] (0,1) -- (.15,.85);
\draw [thick] (0,-.35) -- (0,-1.5 );
\node (X) at (.25,-1.5) {$X$};

\end{tikzpicture}.

If a pivotal braided tensor category has invertible $S$-matrix then we say that the category is modular. An important example of a modular category is the Drinfeld centre of a spherical fusion category $C$. For details see \cite{MR1966525}.
%
%
%

For a given fusion category $C$ the collection of auto-equivalences of $C$ forms a finite $2$-group, with the $2$-morphisms being the natural isomorphisms between auto-equivalences. We will write $\Aut_\otimes(C)$ to be the group of tensor auto-equivalences of $C$, up to natural isomorphism . Similarly we define $\BrAut(C)$ as the group of braided auto-equivalences of a braided fusion category.

\begin{dfn}\cite{MR1976459}
A left module category $M$ over a fusion category $C$ is a $\mathbb{C}$-linear category along with a bi-exact functor $ \otimes : C\times M \to M$, and natural isomorphisms $(X\otimes Y) \otimes M \to X \otimes (Y \otimes M)$ satisfying a straightfoward pentagon equation. 
\end{dfn}

One can think of module categories over a fusion category $C$ as categorifications of modules over the ring $K(C)$. 

In a similar way we can define a bimodule category over $C$. The collection of bimodule categories over a fusion category $C$ has the structure of a monoidal $2$-category. The morphisms between bimodules $M$ and $N$ are bimodule functors. The $2$-morphisms between bimodule functors are the natural transformations between the functors. The tensor product of bimodule categories is somewhat more complicated. We never explicitly work with this tensor product so we avoid giving a description. If the reader is interested details can be found in \cite{MR2677836}.


The $2$-category of bimodule categories over $C$ can be truncated to a group by taking only the invertible bimodules and collapsing isomorphisms to equalities. This group is called the Brauer-Picard group of $C$, or $\BrPic(C)$ for short. Explicitly this group consists of isomorphism classes of bimodule categories $[M]$ over $C$ such that there exists a $C$ bimodule category $N$ satisfying $M \boxtimes_C N \simeq C$ thinking of $C$ as a bimodule category over itself. 

Strongly related to the theory of modules over fusion categories are algebra objects in fusion categories.
\begin{dfn}\cite{MR1976459}
 An algebra object in a fusion category is an object $A$ together with morphisms $\mathbf{1} \to A$ and $A \otimes A \to A$ satisfying associator and unit axioms.
\end{dfn}
One can define left (right) module objects over an algebra $A$ in $C$. The category of left (right) $A$ modules in $C$ forms a right (left) module category over $C$. A result of Ostrik shows that every right (left) module category over $C$ arises as the category of left (right) $A$-modules for some algebra $A$ in $C$.

An invertible bimodule over $C$ consists of both an algebra $A$ in $C$ such that the category of $A$ bimodule objects, $A-\operatorname{bimod}_C$, is equivalent to $C$, and a choice of outer auto-equivalence of $C$. See \cite{MR2909758} for details.

The following lemma about algebra objects will be useful later in the paper.

\begin{lemma}\label{lem:algRes}
Let $G$ be a finite group, and $C \simeq \bigoplus_G C_g$ be a $G$-graded fusion category. If $A \in C_e$ is an algebra object then
\begin{equation*}
A-\operatorname{bimod}_C \simeq \bigoplus_G A-\operatorname{bimod}_{C_g},
\end{equation*}
where 
\begin{equation*}
A-\operatorname{bimod}_{C_g} :=   C_g \cap (A-\operatorname{bimod}_C).
\end{equation*}
\end{lemma}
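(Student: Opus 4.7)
The plan is to decompose any $A$-bimodule via the grading on $C$. Every object $M \in C$ has a canonical decomposition $M \cong \bigoplus_{g \in G} M_g$ with $M_g \in C_g$, obtained by gathering the simple summands of $M$ according to their grades; since $C_g \cap C_h = 0$ for $g \neq h$, this decomposition is natural and functorial, and morphisms $M \to N$ decompose as $\bigoplus_g (M_g \to N_g)$.

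Given an $A$-bimodule structure on $M$, I would first show that each graded piece $M_g$ inherits an $A$-bimodule structure living in $C_g$. Because $A \in C_e$, tensoring with $A$ preserves the grading: $A \otimes M_g \in C_e \cdot C_g = C_g$ and similarly $M_g \otimes A \in C_g$. The left action $\rho_L : A \otimes M \to M$ therefore decomposes as a direct sum of maps $A \otimes M_g \to M$ whose image must lie in $M_g$ (as there are no nonzero morphisms between distinct graded components). Hence $\rho_L$ restricts to actions $A \otimes M_g \to M_g$, and the same argument applies to the right action. The associativity pentagon and unit axioms for $(M, \rho_L, \rho_R)$ involve only morphisms within individual graded components once this decomposition is made, so they restrict to show each $(M_g, \rho_L|_{M_g}, \rho_R|_{M_g})$ is an $A$-bimodule in $C_g$.

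This defines a functor $F : A\text{-}\operatorname{bimod}_C \to \bigoplus_g A\text{-}\operatorname{bimod}_{C_g}$ on objects, and on morphisms by the observation above that bimodule maps decompose along the grading. An inverse functor $G$ is given on objects by sending a tuple $(N_g)_{g \in G}$ to $\bigoplus_g N_g$ with the obvious direct-sum bimodule structure; on morphisms it is similarly by direct sum. The composites $F \circ G$ and $G \circ F$ are naturally isomorphic to the respective identities via the canonical decomposition isomorphism $M \cong \bigoplus_g M_g$, which is a bimodule isomorphism by construction.

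I do not expect any serious obstacle here. The one point that deserves a sentence of care is checking that the action maps really do respect the grading, which relies on the fact that in a graded fusion category $\operatorname{Hom}(X, Y) = 0$ whenever $X \in C_g$ and $Y \in C_h$ with $g \neq h$, together with $A \in C_e$; everything else is a routine verification that the decomposition $M = \bigoplus_g M_g$ is compatible with the bimodule axioms and is functorial in bimodule morphisms.
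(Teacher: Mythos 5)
Your proof is correct, but it takes a genuinely different route from the paper's. You argue directly: decompose the underlying object of an $A$-bimodule along the grading, use $A\in C_e$ together with the vanishing of $\operatorname{Hom}$ between distinct graded components to see that the two action maps (and all bimodule morphisms) restrict to each graded piece, and conclude that $A\text{-}\operatorname{bimod}_C$ splits as the direct sum of the full subcategories $C_g\cap(A\text{-}\operatorname{bimod}_C)$. The paper instead only constructs the easy functor $\bigoplus_G A\text{-}\operatorname{bimod}_{C_g}\to A\text{-}\operatorname{bimod}_C$ (forgetting homogeneity), observes it is fully faithful, and then counts Frobenius--Perron dimensions: each $A\text{-}\operatorname{bimod}_{C_g}$ is an invertible bimodule category over $A\text{-}\operatorname{bimod}_{C_e}$ and so has the same $\FPdim$, giving $|G|\FPdim(C_e)=\FPdim(C)$ on both sides, whence the functor is an equivalence by \cite[Proposition 2.11]{MR2609644}. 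Your argument is more elementary and self-contained, producing an explicit inverse functor and avoiding both the dimension-counting proposition and the (not entirely free) claim that each graded piece is an invertible bimodule category over the trivial piece; the paper's argument is shorter on the page but outsources more to cited machinery. Both are valid proofs of the lemma.
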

\begin{proof}
There is a fully faithful functor $\bigoplus_G A-\operatorname{bimod}_{C_g}$ to $A-\operatorname{bimod}_C$ simply given by forgetting the homogeneous grading. Each of the $A-\operatorname{bimod}_{C_g}$ is an invertible bimodule over $A-\operatorname{bimod}_{C_e}$, so each has Frobenius-Perron dimension equal to that of $A-\operatorname{bimod}_{C_e}$. The Frobenius-Perron dimension of  $\bigoplus_G A-\operatorname{bimod}_{C_g}$ is therefore $|G|\operatorname{FPdim}(A-\operatorname{bimod}_{C_e}) =  |G|\operatorname{FPdim}(C_e)$. The Frobenius-Perron dimension of $A-\operatorname{bimod}_C$ is $\operatorname{FPdim}(C) = |G|\operatorname{FPdim}(C_e)$. Thus the above functor is a fully faithful functor between categories with the same Frobenius-Perron dimensions, and hence is an equivalence by \cite[Proposition 2.11]{MR2609644}.
\end{proof}

In general the problem of finding invertible bimodules over a fusion category is difficult. The following isomorphism due to Etingof, Nikshych, and Ostrik gives us a more concrete way to compute the Brauer-Picard group of a fusion category.
\begin{equation}\label{eq:iso}
\BrPic(C) \cong \BrAut(Z(C)).
\end{equation}
We give a brief discussion of one direction of the isomorphism in Section~\ref{sec:D10}. For more detail see \cite[Theorem 1.1]{MR2677836}.

\subsection{Planar Algebras}
Planar algebras were first introduced in \cite{math.QA/9909027} as an axiomatisation of the standard invariant of a subfactor.
\begin{dfn}
A planar algebra is a collection of vector spaces $\{P_n : n\in \mathbb{N}\}$ along with a multi-linear action of planar tangles.
\end{dfn} For more details see the above mentioned paper.

Given a planar algebra such that $P_0 \cong \mathbb{C}$ one can construct a pivotal monoidal category and vice versa.
\begin{dfn}\label{dfn:dsa}\cite{MR2559686}
Given a planar algebra $P$ we construct a monoidal category $C_P$ as follows:
\begin{itemize}
\item An object of $C_P$ is a projection in the algebra $P_{2n}$.
\item For two projections $\pi_1 \in P_{2n}, \pi_2 \in P_{2m}$ the morphism space $\Hom(\pi_1,\pi_2)$ is the vector space $\pi_1 P_{n+m} \pi_2$.
\item The tensor product of two projections is the disjoint union.
\item The tensor identity is the empty picture.
\item The dual of a projection is given by rotation by $180$ degrees.
\end{itemize}
\end{dfn}

For direct sums to make sense in this constructed category one has to work in the matrix category of $C_P$ where objects are formal direct sums of objects in $C_P$ and morphisms are matrices of morphisms in $C_P$. For more details on the matrix category see \cite{MR2559686}.

For simplicity of notation when we will write $P$ for both the planar algebra $P$, and the matrix category of $C_P$ as constructed above. The context of use should make it clear to the reader if we are referring to the planar algebra or corresponding fusion category.

Conversely, given a pivotal rigid monoidal category $C$, along with choice of symmetrically self-dual object $X$, one can construct a planar algebra $\text{PA}(C;X)_n := \operatorname{Hom}_C(\mathbf{1} \to X^{\otimes n})$. More details can be found in \cite{1607.06041}, where it is shown that this construction is inverse to the one described in Definition~\ref{dfn:dsa}.

Some of the simplest examples of planar algebras are the $ADE$ planar algebras. These are two infinite families $A_N$ and $D_{2N}$, and two sporadic examples, $E_6$ and $E_8$. These planar algebras were given the following generator and relation presentations in \cite{MR2559686,MR2577673}. We include these presentations in our preliminaries as these planar algebras (and associated fusion categories) are the main objects of study in this paper. 

%
%
%

To describe the $ADE$ planar algebras, and the planar algebras in the rest of this paper, we adopt the notation of \cite{MR2577673} for this paper so that for a diagram $X$:

\hspace{1.5cm}$\rho(X) := $  \begin{tikzpicture}[baseline={([yshift=-.5ex]current bounding box.center)}]
 \node [thick, rectangle, draw, minimum size = 29] at (0,0) (Z1) {$X$};
 \draw [thick] (-.4,.5) -- (-.4,.8);
 \draw [thick] (.25,.5) -- (.25,.8);
  \draw [thick] (-.25,-.5) -- (-.25,-.8);
    \draw [thick] (.4,-.5) -- (.4,-.8);
    \draw [dotted,thick] (.1,.65) -- (-.3,.65);
    \draw [dotted,thick] (.3,-.65) -- (-.1,-.65);
   \draw [thick] (.4,.5) arc [radius = .15,start angle = 180, end angle = 0];
   \draw [thick] (.7,.5) -- (.7,-.8);
   \draw [thick] (-.4,-.5) arc [radius = -.15,start angle = 180, end angle = 0];
   \draw [thick] (-.7,.8) -- (-.7,-.5);

 \end{tikzpicture} ,   \hspace{1.5cm}  $\tau(X) := $  \begin{tikzpicture}[baseline={([yshift=-.5ex]current bounding box.center)}]
 \node [thick, rectangle, draw, minimum size = 29] at (0,0) (Z1) {$X$};
 \draw [thick] (-.4,.5) arc [radius = .15, start angle = 180, end angle = 0];
 \draw [thick] (.4,.5) -- (.4,.8);
  \draw [thick] (0,.5) -- (0,.8);
  \draw [thick] (-.4,-.5) -- (-.4,-.8);
    \draw [thick] (.4,-.5) -- (.4,-.8);
    \draw [dotted,thick] (.3,.65) -- (.1,.65);
    \draw [dotted,thick] (.3,-.65) -- (-.3,-.65);

 \end{tikzpicture} ,\hspace{1.5cm}  $\hat{X} := $  \begin{tikzpicture}[baseline={([yshift=-.5ex]current bounding box.center)}]
 \node [thick, rectangle, draw, minimum size = 29] at (0,0) (Z1) {$X$};
  \draw [thick] (-.4,-.5) -- (-.4,-.8);
    \draw [thick] (.4,-.5) -- (.4,-.8);
    \draw [dotted,thick] (.3,-.65) -- (-.3,-.65);
    \draw [thick] (-.7,-.8) -- (-.7,.2);
     \draw [thick] (.7,-.8) -- (.7,.2);
     \draw [thick] (-.7,.2) arc [radius = .7, start angle = 180, end angle = 0];

 \end{tikzpicture}.

Composition of two elements in $P_{2N}$ is given by vertical stacking with $N$ strings pointing up and $N$ strings pointing down.

\begin{dfn}
Let $q$ be a root of unity. For $n$ a natural number we define the quantum integer $[n]_q := \frac{q^n - q^{-n}}{q - q^{-1}}$.
\end{dfn}

\begin{dfn}\cite{MR873400}
We define the Jones-Wenzl idempotents $f^{(n)}$ in a planar algebra $P$ via the recursive relation:
\[
\xyJonesWenzlIdempotentSeven{\JW{n+1}} =
\xyJonesWenzlIdempotentSixPlusOne{\JW{n}} - \frac{[n]_q}{[n+1]_q	}
\xyWenzlRecurrenceLastTermSixPlusOne{\JW{n}}.
\]
\end{dfn}

The $A_N$ planar algebras have loop parameter $[2]_q$, no generators, and a single relation. The $D_{2N}$, $E_6$, and $E_8$ planar algebras have loop parameter $[2]_q$, a single uncappable generator $S\in P_{k}$ with rotational eigenvalue $\omega$, satisfying relations as in the following table:

\begin{table}[h!]

    \centering
    
    \begin{tabular}{c|cccc}
    	\toprule
			     &   $q$ & $k$ & $\omega$ & additional relations  \\ 
	\midrule
			    $A_N$ & A primitive $2N+2$th root of unity & - & - & $f^{(N)} = 0$ \\[.3cm]
			    $D_{2N}$ &A primitive $8N-4$th root of unity & $4N-4$ & $\pm i$ & $S \otimes S = f^{(2N-2)}$      \\
			    &&&& $f^{(4N-3)} = 0$ 									 \\[.3cm] 	  
			    $E_6$ & A primitive $24$th root of unity& $6$ & $e^\frac{\pm 4\pi i}{3}$ & $S^2 = S + [2]_q^2[3]_qf^{(3)}$ \\ 	
			    &&&& $\hat{S}\circ f^{(8)} = 0$ \\[.3cm]
			    $E_8$ &A primitive $60$th root of unity & $10$ & $e^\frac{\pm 6\pi i}{5}$ & $S^2 = S + [2]_q^2[3]_qf^{(5)}$ \\	
			    &&&& $\hat{S}\circ f^{(12)} = 0$.\\
			                   
    	\bottomrule
    \end{tabular}
    \end{table}

%
%
%
%
%
%
%
%

\begin{rmk}
The $A_{N}$ planar algebras can be equipped with a braiding. There are multiple different braidings one can choose. The two we will use in this paper are the standard braiding 
\begin{center}
$\Braid = iq^\frac{1}{2} \Id - iq^\frac{-1}{2}\CupCap$ 
\end{center}
and the opposite braiding 
\begin{center}
$\Braid = - iq^\frac{-1}{2} \Id +iq^\frac{1}{2}\CupCap$. 
\end{center}
We call these braided planar algebras $A_{N}$ and $A_{N}^\text{bop}$ respectively. 
\end{rmk}

From these (braided) planar algebras we get (braided) fusion categories via the idempotent completion construction described earlier. We present the fusion graphs for these categories below:

\newpage
$A_{N}$ : \begin{tikzpicture}[baseline={([yshift=-.5ex]current bounding box.center)}]
        \vertex[label=$f^{(0)}$](f0) at (0,0) {};
        \vertex[label=$f^{(1)}$](f1) at (2,0) {};
        \vertex[label=$f^{(2)}$](f2) at (4,0) {};
         \vertex[label=$f^{(N-2)}$](f-2) at (7,0) {};
         \vertex[label=$f^{(N-1)}$](f-1) at (9,0) {};
        \Edge(f0)(f1)
        \Edge(f1)(f2)
        \Edge(f-2)(f-1)
     \tikzset{EdgeStyle/.style={dashed}}
        \Edge (f2)(f-2)
       
    \end{tikzpicture}\vspace{1cm}

$D_{2N}$ :   \begin{tikzpicture}[baseline={([yshift=-.5ex]current bounding box.center)}]
        \vertex[label=$f^{(0)}$](f0) at (0,0) {};
        \vertex[label=$f^{(1)}$](f1) at (2,0) {};
        \vertex[label=$f^{(2)}$](f2) at (4,0) {};
         \vertex[label=$f^{(2N-3)}$](f-3) at (7,0) {};
         \vertex[label=$P$](P) at (9,1) {};
         \vertex[label=$Q$](Q) at (9,-1) {};
        \Edge(f0)(f1)
        \Edge(f1)(f2)
        \Edge(f-2)(P)
        \Edge(f-2)(Q)
     \tikzset{EdgeStyle/.style={dashed}}
        \Edge (f2)(f-2)
       
    \end{tikzpicture}\vspace{1cm}

$E_{6}$ : \begin{tikzpicture}[baseline={([yshift=-.5ex]current bounding box.center)}]
        \vertex[label=$f^{(0)}$](f0) at (0,0) {};
        \vertex[label=$f^{(1)}$](f1) at (2,0) {};
        \vertex[label=above right:{$f^{(2)}$}](f2) at (4,0) {};
         \vertex[label=$X$](X) at (4,2) {};
          \vertex[label=$Y$](Y) at (6,0) {};
           \vertex[label=$Z$](Z) at (8,0) {};
        \Edge(f0)(f1)
        \Edge(f1)(f2)
        \Edge(f2)(X)
        \Edge(f2)(Y)
        \Edge(Y)(Z)
       
    \end{tikzpicture}\vspace{1cm}

$E_{8}$ :\begin{tikzpicture}[baseline={([yshift=-.5ex]current bounding box.center)}]
        \vertex[label=$f^{(0)}$](f0) at (0,0) {};
        \vertex[label=$f^{(1)}$](f1) at (2,0) {};
        \vertex[label=$f^{(2)}$](f2) at (4,0) {};
        \vertex[label=$f^{(3)}$](f3) at (6,0) {};
        \vertex[label=above right:{$f^{(4)}$}](f4) at (8,0) {};
         \vertex[label=$U$](U) at (8,2) {};
          \vertex[label=$V$](V) at (10,0) {};
           \vertex[label=$W$](W) at (12,0) {};
        \Edge(f0)(f1)
        \Edge(f1)(f2)
        \Edge(f2)(f3)
        \Edge(f3)(f4)
        \Edge(f4)(U)
        \Edge(f4)(V)
        \Edge(V)(W)
       
    \end{tikzpicture}

\vspace{1cm}
Where: 
\begin{itemize}\setlength\itemsep{1em}
 \item $P:= \frac{f^{(2N-2)} + S}{2}$,\quad 
 
 $Q:= \frac{f^{(2N-2)} - S}{2}$ in $D_{2N}$
\item  $X := \frac{1}{\sqrt{3}}f^{(3)} + \left(1 - \frac{2}{\sqrt{3}}\right)S$,\quad 

$Y := \left(1 -\frac{1}{\sqrt{3}}\right)f^{(3)} + \left(-1 + \frac{2}{\sqrt{3}}\right)S$,\quad 

$Z :=$ 
\xyJonesWenzlIdempotentSixPlusOne{Y} - $\frac{[3]_q}{[2]_q}$
\xyWenzlRecurrenceLastTermSixPlusOne{Y},
 in $E_6$
\item     $U :=  \frac{1}{4} \left(-\sqrt{5}+\sqrt{6 \left(1+\frac{1}{\sqrt{5}}\right)}+1\right)f^{(5)} + \frac{1}{2} \left(\sqrt{5}-\sqrt{6 \left(1+\frac{1}{\sqrt{5}}\right)}+1\right)S$,\quad 

$V := \frac{1}{4} \left(\sqrt{5}-\sqrt{6\left(1+\frac{1}{\sqrt{5}}\right)}+3\right)f^{(5)} +\frac{1}{2} \left(-\sqrt{5}+\sqrt{6 \left(1+\frac{1}{\sqrt{5}}\right)}-1\right)S$,\quad 

$W := $
\xyJonesWenzlIdempotentSixPlusOne{V} - $\frac{[5]_q}{\phi[2]_q}$
\xyWenzlRecurrenceLastTermSixPlusOne{V},
 in $E_8$.
\end{itemize}

Each of these categories is $\mathbb{Z} /2 \mathbb{Z}$-graded. The trivially graded piece of this grading is also a fusion category. These subcategories are called the adjoint subcategories.


The fusion graphs for the adjoint subcategories of the $ADE$ fusion categories are as follows:

$\ad(A_{2N})$ : 
    \begin{tikzpicture}[baseline={([yshift=-.5ex]current bounding box.center)}]
        \vertex[label=$f^{(0)}$](f0) at (0,0) {};
        \vertex[label=$f^{(2)}$](f2) at (2,0) {};
        \vertex[label=$f^{(4)}$](f4) at (4,0) {};
         \vertex[label=$f^{(2N-2)}$](fend) at (7,0) {};
        \Edge(f0)(f2)
        \Edge(f2)(f4)
        \Loop[dist = 1cm,dir = SO,style = {thick,-}](fend)(fend)
        \Loop[dist = 1cm,dir = SO,style = {thick,-}](f4)(f4)
        \Loop[dist = 1cm,dir = SO,style = {thick,-}](f2)(f2)

     \tikzset{EdgeStyle/.style={dashed}}
        \Edge (f4)(fend)
       
    \end{tikzpicture}

$\ad(A_{2N+1})$ : \begin{tikzpicture}[baseline={([yshift=-.5ex]current bounding box.center)}]
        \vertex[label=$f^{(0)}$](f0) at (0,0) {};
        \vertex[label=$f^{(2)}$](f2) at (2,0) {};
        \vertex[label=$f^{(4)}$](f4) at (4,0) {};
         \vertex[label=$f^{(2N-2)}$](f-2) at (7,0) {};
         \vertex[label=$f^{(2N)}$](fend) at (9,0) {};
        \Edge(f0)(f2)
        \Edge(f2)(f4)
        \Edge(f-2)(fend)
        \Loop[dist = 1cm,dir = SO,style = {thick,-}](f-2)(f-2)
         \Loop[dist = 1cm,dir = SO,style = {thick,-}](f2)(f2)
          \Loop[dist = 1cm,dir = SO,style = {thick,-}](f4)(f4)
     \tikzset{EdgeStyle/.style={dashed}}
        \Edge (f4)(f-2)
       
    \end{tikzpicture}

$\ad(D_{2N})$ :   \begin{tikzpicture}[baseline={([yshift=-.5ex]current bounding box.center)}]
        \vertex[label=$f^{(0)}$](f0) at (0,0) {};
        \vertex[label=$f^{(2)}$](f2) at (2,0) {};
        \vertex[label=$f^{(4)}$](f4) at (4,0) {};
         \vertex[label=$f^{(2N-4)}$](f-2) at (7,0) {};
         \vertex[label=$P$](P) at (9,1) {};
         \vertex[label=$Q$](Q) at (9,-1) {};
        \Edge(f0)(f2)
        \Edge(f2)(f4)
        \Edge(f-2)(P)
        \Edge(f-2)(Q)
        \Edge[style = bend right](Q)(P)
        \Loop[dist = 1cm,dir = SO,style = {thick,-}](f-2)(f-2)
         \Loop[dist = 1cm,dir = SO,style = {thick,-}](f2)(f2)
          \Loop[dist = 1cm,dir = SO,style = {thick,-}](f4)(f4)
     \tikzset{EdgeStyle/.style={dashed}}
        \Edge (f4)(f-2)
       
    \end{tikzpicture}

$\ad(E_{6})$ :\begin{tikzpicture}[baseline={([yshift=-.5ex]current bounding box.center)}]
        \vertex[label=$f^{(0)}$](f0) at (0,0) {};
        \vertex[label=above right:{$f^{(2)}$}](f2) at (2,0) {};
        \vertex[label=$Z$](Z) at (4,0) {};
        \Edge(f0)(f2)
        \Edge(f2)(Z)
        \Loop[dist = 1cm,dir = NO,style = {thick,-}](f2)(f2)
        \Loop[dist = 1cm,dir = SO,style = {thick,-}](f2)(f2)
       
    \end{tikzpicture}

$\ad(E_{8})$ : \begin{tikzpicture}[baseline={([yshift=-.5ex]current bounding box.center)}]
        \vertex[label=$f^{(0)}$](f0) at (0,0) {};
        \vertex[label=$f^{(2)}$](f2) at (2,0) {};
        \vertex[label=above right:{$f^{(4)}$}](f4) at (4,0) {};
         \vertex[label=$W$](W) at (6,0) {};
        \Edge(f0)(f2)
        \Edge(f2)(f4)
        \Edge(f4)(W)
        \Loop[dist = 1cm,dir = NO,style = {thick,-}](f4)(f4)
        \Loop[dist = 1cm,dir = SO,style = {thick,-}](f4)(f4)
        \Loop[dist = 1cm,dir = SO,style = {thick,-}](f2)(f2)
       
    \end{tikzpicture}

Planar algebra presentations for these adjoint subcategories can be acquired by taking the sub-planar algebra generated by the strand $f^{(2)}$ inside the full planar algebra. We explicitly describe the planar algebras for $\ad(A_{2N})$ and $\ad(D_{2N})$ as we will need them later on in this paper.

The $\ad(A_{N})$ planar algebra has a single generator $T \in P_{3}$ (which we draw as a trivalent vertex) with the following relations:
\begin{enumerate}
			\item $q$ a $2N+2$-th root of unity,
			\item \LOOP= $[3]_q$,
			\item $\rho(T) = T$,
			\item $\tau(T) = 0$,
			\item  \begin{tikzpicture}[scale = .5,baseline={([yshift=-.5ex]current bounding box.center)}]
				\draw [thick] (0,0) -- (0,1);
				\draw [thick] (0,1) -- (.707,1.707);
				\draw [thick] (0,1) -- (-.707,1.707);
				\draw [thick]  (.707,1.707) -- (0,2.414);
				\draw [thick]  (-.707,1.707) -- (0,2.414);	
				\draw [thick] (0,2.414)--(0,3.414);
				
 				\end{tikzpicture}  = $\left( \frac{[3]_q-1}{[2]_q} \right)$ \begin{tikzpicture}[scale = .5,baseline={([yshift=-.5ex]current bounding box.center)}]
				\draw [thick] (0,0) -- (0,3.414);
 				\end{tikzpicture},
			\item  \begin{tikzpicture}[scale = .5,baseline={([yshift=-.5ex]current bounding box.center)}]
				\draw [thick] (0,0) -- (.707,-.707);
				\draw [thick] (0,0) -- (-.707,-.707);
				\draw [thick] (0,0) -- (0,1);
				\draw [thick] (0,1) -- (.707,1.707);
				\draw [thick] (0,1) -- (-.707,1.707);
 				\end{tikzpicture} - \begin{tikzpicture}[scale = .5,baseline={([yshift=-.5ex]current bounding box.center)}]
				\draw [thick] (0,0) -- (-.707,.707);
				\draw [thick] (0,0) -- (-.707,-.707);
				\draw [thick] (0,0) -- (1,0);
				\draw [thick] (1,0) -- (1.707,.707);
				\draw [thick] (1,0) -- (1.707,-.707);
 				\end{tikzpicture}  = $\frac{1}{[2]_q} \Id - \frac{1}{[2]_q} \CupCap$.
\end{enumerate}


The $\ad(D_{2N})$ planar algebra has two generators $T\in P_3$ (again drawn as a trivalent vertex), and $S \in P_{2N-2}$ with the following relations:
  \begin{enumerate}
			\item $q$ a $8N-4$-th root of unity,
			\item \LOOP = $[3]_q$,
			\item $\rho(S) = -S$,
			\item $\rho(T) = T$,
			\item $\tau(S) = 0$,
			\item $\tau(T) = 0$,
			\item  \begin{tikzpicture}[scale = .5,baseline={([yshift=-.5ex]current bounding box.center)}]
				\draw [thick] (0,0) -- (0,1);
				\draw [thick] (0,1) -- (.707,1.707);
				\draw [thick] (0,1) -- (-.707,1.707);
				\draw [thick]  (.707,1.707) -- (0,2.414);
				\draw [thick]  (-.707,1.707) -- (0,2.414);	
				\draw [thick] (0,2.414)--(0,3.414);
				
 				\end{tikzpicture}  = $\left(\frac{[3]_q-1}{[2]_q}\right)$\begin{tikzpicture}[scale = .5,baseline={([yshift=-.5ex]current bounding box.center)}]
				\draw [thick] (0,0) -- (0,3.414);
 				\end{tikzpicture},
			\item  \begin{tikzpicture}[scale = .5,baseline={([yshift=-.5ex]current bounding box.center)}]
				\draw [thick] (0,0) -- (.707,-.707);
				\draw [thick] (0,0) -- (-.707,-.707);
				\draw [thick] (0,0) -- (0,1);
				\draw [thick] (0,1) -- (.707,1.707);
				\draw [thick] (0,1) -- (-.707,1.707);
 				\end{tikzpicture} - \begin{tikzpicture}[scale = .5,baseline={([yshift=-.5ex]current bounding box.center)}]
				\draw [thick] (0,0) -- (-.707,.707);
				\draw [thick] (0,0) -- (-.707,-.707);
				\draw [thick] (0,0) -- (1,0);
				\draw [thick] (1,0) -- (1.707,.707);
				\draw [thick] (1,0) -- (1.707,-.707);
 				\end{tikzpicture}  = $\frac{1}{[2]_q} \Id - \frac{1}{[2]_q} \CupCap$,
			\item $S \circ( T \otimes \operatorname{Id}_{N-3}) = 0$,
			\item $S\otimes S = f^{(N-1)}$.
			
\end{enumerate}

%
%

As the $\ad(A_N)$ planar algebras come from the $A_N$ planar algebras they inherit the braiding, so can also be thought of as braided planar algebras. We can now also equip the $\ad(D_{2N})$ planar algebras with a braiding. The two braidings on $\ad(D_{2N})$ that we care about are
\begin{center}
\Braid = $(q^2-1)$\Id + $q^{-2}$\CupCap - $(q^2 - q^{-2})$ \begin{tikzpicture}[scale = .6,baseline={([yshift=-.5ex]current bounding box.center)}]
				\draw [thick] (0,0) -- (-.707,.707);
				\draw [thick] (0,0) -- (-.707,-.707);
				\draw [thick] (0,0) -- (1,0);
				\draw [thick] (1,0) -- (1.707,.707);
				\draw [thick] (1,0) -- (1.707,-.707);
 				\end{tikzpicture},
\end{center}
and
\begin{center}
\Braid = $q^{-2}$\Id + $(q^{2}-1)$\CupCap - $(q^2 - q^{-2})$ \begin{tikzpicture}[scale = .6,baseline={([yshift=-.5ex]current bounding box.center)}]
				\draw [thick] (0,0) -- (.707,-.707);
				\draw [thick] (0,0) -- (-.707,-.707);
				\draw [thick] (0,0) -- (0,1);
				\draw [thick] (0,1) -- (.707,1.707);
				\draw [thick] (0,1) -- (-.707,1.707);
 				\end{tikzpicture},
\end{center}
We call these braided planar algebras $\ad(D_{2N})$ and $\ad(D_{2N})^\text{bop}$. Note that this is just the braiding of the $2$-coloured Jones polynomial specialised at $q$.

Note that there is some ambiguity in how we think of the objects of the adjoint subcategory. In the above fusion graphs the simple objects come directly from the corresponding full category, so the generating object for each of these is $f^{(2)}$. Yet if we construct the fusion categories from the above planar algebras the generating object will now be called $f^{(1)}$. To fix a convention for the rest of this paper we will regard objects of the adjoint subcategory as objects of the full category.

The goal of this paper is to compute the Brauer-Picard groups of all the fusion categories we have just described. To simplify our computations we notice that many of these fusion categories lie in the same Galois equivalence classes, which when paired with the following Lemma, reduces the number of fusion categories we need to compute the Brauer-Picard group for.

\begin{lemma}\label{lem:galoisPreservesBrPic}
Let $C$ and $D$ be Galois conjugate fusion categories, then $\BrPic(C) \cong \BrPic(D)$.
\end{lemma}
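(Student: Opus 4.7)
The plan is to leverage the isomorphism $\BrPic(C) \cong \BrAut(Z(C))$ from \cite{MR2677836} together with the fact that all structure constants of a fusion category can be taken to lie in some number field (so that $\text{Gal}(\overline{\QQ}/\QQ)$ acts on fusion categories and their associated invariants). By definition of Galois conjugacy, we may assume $D = \sigma(C)$ for some Galois automorphism $\sigma$, where $\sigma(C)$ is the fusion category with the same fusion rules as $C$ but structure constants (F-symbols, pivotal structure) obtained by applying $\sigma$ entrywise.

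First I would verify that Galois conjugation commutes with the Drinfeld centre, that is, $Z(\sigma(C)) \simeq \sigma(Z(C))$ as braided fusion categories. After fixing a skeleton of $C$, a half-braiding on a simple object is a tuple of scalars satisfying polynomial equations over $\QQ$ in the F-symbols of $C$; applying $\sigma$ entrywise therefore yields the corresponding half-braiding data for $\sigma(C)$. The same reasoning shows that the tensor product and braiding on the centre are preserved, since they are defined by polynomial expressions in the underlying data.

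Next I would show that $\sigma$ induces an isomorphism $\BrAut(Z(C)) \xrightarrow{\sim} \BrAut(Z(\sigma(C)))$. Fixing skeleta, a braided auto-equivalence is given by a permutation of simple objects together with tensorator and monoidal coherence scalars satisfying polynomial equations (hexagon, pentagon, compatibility with the braiding) in the structure constants of $Z(C)$. Applying $\sigma$ entrywise sends such data to braided auto-equivalence data for $Z(\sigma(C))$, and applying $\sigma^{-1}$ provides an inverse. Composition of auto-equivalences is given by matrix multiplication of the tensorator data, which commutes with $\sigma$, so the bijection is a group homomorphism. Chaining with the Etingof--Nikshych--Ostrik isomorphism gives $\BrPic(C) \cong \BrAut(Z(C)) \cong \BrAut(Z(D)) \cong \BrPic(D)$.

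The main subtlety is tracking natural isomorphism classes rather than strict equalities: one must check that two braided auto-equivalences of $Z(C)$ are naturally isomorphic if and only if their Galois conjugates are. This holds because the existence of a natural isomorphism is itself cut out by polynomial equations over $\QQ$ in the relevant structure constants (the components of a tensor natural isomorphism are scalars satisfying such equations), and hence is preserved by $\sigma$ and $\sigma^{-1}$. Granted this, the proof reduces to the clean observation above; no analysis of individual invertible bimodules is needed.
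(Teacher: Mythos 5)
Your proposal is correct and follows essentially the same route as the paper: the paper's proof simply notes that Galois conjugation commutes with taking the Drinfeld centre and carries braided auto-equivalences of $Z(C)$ to those of $Z(D)$, then invokes $\BrPic(C)\cong\BrAut(Z(C))$. You have merely filled in the details (polynomiality of the half-braiding, coherence, and natural-isomorphism data) that the paper leaves implicit.
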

\begin{proof}
If $C$ and $D$ are Galois conjugate then so are $Z(C)$ and $Z(D)$. This Galois action carries each braided auto-equivalence of $Z(C)$ to a braided auto-equivalence of $Z(D)$ and vice versa.  
\end{proof}

To determine which categories are in the same Galois equivalence class we study the corresponding planar algebras. For a fixed $N$, all of the $A_{2N+1}$ planar algebras lie in a unique Galois orbit. The same holds for the $D_{2N}$, $E_6$, and $E_8$ planar algebras. The $A_{2N}$ planar algebras lie in two Galois orbits, depending on whether $\frac{2N+1}{i \pi}\operatorname{ln}(q)$ is even or odd. See \cite{AN-Survey} for more details on the $A_N$ cases. These Galois equivalences of planar algebras induce Galois equivalences of the corresponding fusion categories.

We now pick representatives from each Galois equivalence class. For each of the $A_{2N+1}$, $D_{2N}$, $E_6$, or $E_8$ families of fusion categories, our choice of representative is the category corresponding to the planar algebra with $q$ being the primitive root of unity closest to the positive real line, and positive rotational eigenvalue (when applicable). For each $A_{2N}$ family of fusion categories, our two representatives of the Galois orbits are the categories coming from the planar algebras with $q= e^\frac{i\pi}{2N+1}$ and $q= e^\frac{2i\pi}{2N+1}$. We call these fusion categories $A_{2N}^\text{deg}$ and $A_{2N}^\text{mod}$ respectively. These names are due to the fact that $A_{2N}^\text{mod}$ is a modular tensor category, while $A_{2N}^\text{deg}$ is not. The adjoint subcategories of $A_{2N+1}$, $D_{2N}$, $E_6$, and $E_8$, each inherit the unique Galois orbit of the full category. Furthermore the adjoint subcategories of $A_{2N}$ now also have a unique Galois orbit. Our choice of representative of the Galois orbit is the adjoint subcategory of $A_{2N}^\text{deg}$, which we will simply call $\ad(A_{2N})$.

\section{The Drinfeld Centres of the $ADE$ fusion categories}\label{sec:centres}

The main aim of this paper is to compute the Brauer-Picard groups of the $ADE$ fusion categories and adjoint subcategories. We do this via computing the braided auto-equivalence groups of the Drinfeld centre. This Section is devoted to computing these centres. With Proposition~\ref{prop:PA_to_FC} in mind is natural to want planar algebra presentations of the centres.

The centres of the full $ADE$ fusion categories are computed in \cite{MR1815993,AN-Survey}.
\begin{align*}
Z(A_{2N+1}) & \simeq A_{2N+1} \boxtimes A_{2N+1}^{\text{bop}}, \\
Z(A_{2N}^\text{mod}) & \simeq A_{2N}^\text{mod} \boxtimes {A_{2N}^\text{mod}}^{\text{bop}}, \\
Z(A_{2N}^\text{deg}) & \simeq \ad(A_{2N}^\text{deg}) \boxtimes \ad({A_{2N}^\text{deg}})^\text{bop} \boxtimes Z(\ad(A_3)), \\
Z(D_{2N}) &\simeq  A_{4N-3 } \boxtimes \ad(D_{2N})^{\text{bop}}, \\
Z(E_{6}) &\simeq A_{11} \boxtimes A_{3}^{\text{bop}}, \\
Z(E_{8}) &\simeq A_{29} \boxtimes \ad(A_{4})^{\text{bop}}.
\end{align*}
The "bops" on the right hand side all occur for the same reason. That is because of the general theorem from \cite{MR1815993}, stating that for a commutative algebra $A$ in a modular category $C$, the centre of $A-mod_C$ is $C\boxtimes (A-mod^0_C)^{\text{bop}}$ (here $A-mod^0_C$ is the category of dyslectic $A$ modules).

Note that we have given planar algebra presentations for the factors of these products in the preliminaries. 

The centres of the adjoint subcategories have yet to be described in the literature. When the adjoint subcategory admits a modular braiding the centres are trivial to compute.
\begin{lemma}
\begin{align*}
Z(\ad(A_{2N})) & \simeq \ad(A_{2N}) \boxtimes \ad(A_{2N})^{\text{bop}} \\
Z(\ad(D_{2N})) &\simeq  \ad(D_{2N}) \boxtimes \ad(D_{2N})^{\text{bop}}
\end{align*}
\end{lemma}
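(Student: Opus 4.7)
The plan is to reduce the lemma to an application of a classical theorem: for any modular tensor category $C$, one has $Z(C) \simeq C \boxtimes C^{\text{bop}}$ (a result of M\"uger, which can be viewed as a special case of the theorem from \cite{MR1815993} already cited for the full $ADE$ centres — take $A = \mathbf{1}$ in the modular category $C$). Thus the entire content of the lemma is the verification that $\ad(A_{2N})$ and $\ad(D_{2N})$, equipped with their natural inherited braidings, are modular tensor categories.

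For $\ad(D_{2N})$, an explicit braiding was already written down in the preliminaries (via the planar algebra presentation with the $2$-coloured Jones braiding). To verify modularity, I would compute the $S$-matrix directly on the simple objects listed in the fusion graph (those of the form $f^{(2k)}$ together with $P$ and $Q$) using the standard Kauffman-bracket formulas, and show that its determinant is non-zero at the chosen primitive $(8N-4)$-th root of unity. Alternatively, one can invoke the known description of $\ad(D_{2N})$ as the dyslectic modules for the canonical \'etale algebra $\mathbf{1} \oplus f^{(4N-4)}$ in $A_{4N-3}$; since this algebra is connected and \'etale in a modular category, the resulting category of dyslectic modules is modular (see \cite{MR1815993}).

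For $\ad(A_{2N})$, the Galois representative chosen in the preliminaries is the adjoint of $A_{2N}^{\text{deg}}$, with $q = e^{i\pi/(2N+1)}$. The braiding is inherited from the parent $A_{2N}$ planar algebra. Modularity of this subcategory is again most cleanly seen either by direct calculation of the $S$-matrix for the simples $f^{(2k)}$, $0 \le k \le N-1$, or by identifying $\ad(A_{2N})$ with a well-known modular category (the even part of $SU(2)$ at level $2N-1$); in either case the determinant of the $S$-matrix is a product of quantum integers that are all non-zero at our chosen $q$.

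With modularity of both categories established, M\"uger's theorem supplies the factorization and the proof is complete. The only genuine obstacle is the modularity check itself, which is routine; I would actually do both families simultaneously by appealing to the general fact that for a modular $C$ and a connected \'etale algebra $A \in C$, the category $A\text{-mod}^0_C$ is modular, applied to the same algebras used to produce $\ad(A_{2N})$ and $\ad(D_{2N})$ as de-equivariantizations of $A_{2N}$ and $D_{2N}$ respectively.
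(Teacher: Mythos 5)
Your proposal takes essentially the same route as the paper: the paper's proof is a single sentence observing that $\ad(A_{2N})$ and $\ad(D_{2N})$ admit modular braidings and then invoking M\"uger's theorem \cite{MR1990929} to conclude $Z(C)\simeq C\boxtimes C^{\text{bop}}$. You simply supply more detail on the modularity check (which the paper asserts without proof), so your argument is correct and, if anything, more complete.
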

\begin{proof}
These fusion categories admit modular braidings, thus by a theorem of Muger \cite{MR1990929} the centres are as described.
\end{proof}

To compute the centres of the rest of the adjoint subcategories we appeal to a theorem of Gelaki, Naidu and Nikshych (stated below) which allows us to compute the centre of $\ad(C)$ given the centre of $C$. As we know the centres of all the $ADE$ fusion categories we are able to get explicit descriptions of the centres of the corresponding adjoint subcategories. We explicitly describe the details this theorem as we will be using it extensively for the rest of this Section.

\begin{cons}\label{rmk:centre_Construction}\cite[Corollary 3.7]{MR2587410}
Let $C$ be a $G$-graded fusion category, with trivially graded piece $C_0$. Consider all objects in the centre of $C$ that restrict to direct sums of the tensor identity in $C$. These objects form a $\operatorname{Rep}(G)$ subcategory. If we take the centraliser (see \cite{MR1990929} for details) of the $\Rep(G)$ subcategory of $Z(C)$, and de-equivariantize by the distinguished copy of $\operatorname{Rep}(G)$,  then the resulting category is braided equivalent to $Z(C_0)$.
\end{cons}

Straight away we can use this construction to get an explicit description of the centre of $\ad(E_8)$.
\begin{lemma}
$Z(\ad(E_8)) \simeq \ad(D_{16}) \boxtimes  \ad(A_4)^{\text{bop}}$
\end{lemma}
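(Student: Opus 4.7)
The plan is to apply Construction~\ref{rmk:centre_Construction} to the $\mathbb{Z}/2\mathbb{Z}$-graded fusion category $E_8$, whose trivially graded piece is $\ad(E_8)$, starting from the known decomposition $Z(E_8) \simeq A_{29} \boxtimes \ad(A_4)^{\text{bop}}$. This reduces the problem to three essentially mechanical steps and one substantive identification.

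First I would identify the distinguished $\Rep(\mathbb{Z}/2\mathbb{Z})$ subcategory of $Z(E_8)$. Because the forgetful functor $Z(E_8)\to E_8$ is monoidal it preserves Frobenius-Perron dimension, so any object restricting to a direct sum of copies of $\mathbf{1}_{E_8}$ is forced to be a direct sum of invertibles. The only non-trivial invertible in $A_{29} \boxtimes \ad(A_4)^{\text{bop}}$ is $f^{(28)} \boxtimes \mathbf{1}$, since the Fibonacci category $\ad(A_4)$ contains no non-trivial invertible. A short calculation with the twists $\theta_{f^{(k)}} = q^{k(k+2)/2}$ at $q = e^{i\pi/30}$ gives $\theta_{f^{(28)}} = 1$, confirming that $\{\mathbf{1}, f^{(28)} \boxtimes \mathbf{1}\}$ is Tannakian, and yields the double braiding $c_{f^{(28)},f^{(k)}} c_{f^{(k)},f^{(28)}} = (-1)^k \id$. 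Consequently the M\"uger centralizer of the distinguished $\Rep(\mathbb{Z}/2\mathbb{Z})$ in $Z(E_8)$ is $\ad(A_{29}) \boxtimes \ad(A_4)^{\text{bop}}$.

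Next, I would de-equivariantize by the distinguished $\Rep(\mathbb{Z}/2\mathbb{Z})$. Since the underlying \'etale algebra $\mathbf{1} \oplus f^{(28)} \boxtimes \mathbf{1}$ is concentrated in the first Deligne factor, de-equivariantization acts only on that factor, producing
\[
\bigl(\ad(A_{29})/\langle f^{(28)} \rangle\bigr) \boxtimes \ad(A_4)^{\text{bop}}.
\]
The remaining and main step is to identify $\ad(A_{29})/\langle f^{(28)} \rangle$ with $\ad(D_{16})$. I would combine the standard $SU(2)$-level-$28$ orbifold identification $A_{29}/\langle f^{(28)} \rangle \simeq D_{16}$ (matching Coxeter numbers $30 = 2 \cdot 16 - 2$) with the observation that the de-equivariantization functor $A_{29}\to D_{16}$, being a dominant tensor functor, sends the adjoint subcategory onto the adjoint subcategory. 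As a direct cross-check, one enumerates orbits: the $f^{(28)}$-action on the simples $f^{(0)}, f^{(2)}, \ldots, f^{(28)}$ of $\ad(A_{29})$ yields seven free pairs $\{f^{(2k)}, f^{(28-2k)}\}$ for $k = 0, \ldots, 6$ together with the fixed object $f^{(14)}$, which splits into two simples in the de-equivariantization, giving nine simples whose dimensions agree with those of $\ad(D_{16}) = \{f^{(0)}, f^{(2)}, \ldots, f^{(12)}, P, Q\}$.
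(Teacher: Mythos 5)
Your proposal is correct and follows essentially the same route as the paper: identify the distinguished $\Rep(\mathbb{Z}/2\mathbb{Z})$ as the one generated by $f^{(28)}\boxtimes\mathbf{1}$ (forced since it is the only non-trivial invertible), compute its M\"uger centralizer to be $\ad(A_{29})\boxtimes\ad(A_4)^{\text{bop}}$, and de-equivariantize using $A_{29}\sslash\Rep(\mathbb{Z}/2\mathbb{Z})\simeq D_{16}$ together with the compatibility of de-equivariantization with passing to the adjoint subcategory. The explicit twist/monodromy computation and the orbit count against the nine simples of $\ad(D_{16})$ are useful cross-checks but do not change the argument.
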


\begin{proof}
As there are only two simple objects of dimension $1$ in $A_{29} \boxtimes \ad(A_4)^{\text{bop}}$ it follows that the $\operatorname{Rep}(\mathbb{Z}/2\mathbb{Z})$ subcategory we are interested in must be the $\operatorname{Rep}(\mathbb{Z}/2\mathbb{Z})$ subcategory of $A_{29}$. We compute the centralizer of this subcategory to be $\ad(A_{29})\boxtimes \ad(A_4)^{\text{bop}}$. The result now follows as de-equivariantizing commutes with the taking of the adjoint subcategory, and $A_{29} // \operatorname{Rep}(\mathbb{Z}/2\mathbb{Z}) = D_{16}$.
\end{proof}

\subsection{Planar Algebras for the Drinfeld Centres of $\ad(A_{2N+1})$ and $\ad(E_6)$}

While we can apply the same techniques as in computing $Z(\ad(E_8))$ to computing $Z(\ad(A_{2N+1}))$ and $Z(\ad(E_6))$, the resulting categories don't naively have a planar algebra presentation. To achieve such a presentation we work through Construction~\ref{rmk:centre_Construction} on the level of the planar algebra.

We start by looking at the $\ad(A_{2N+1})$ family. The $\operatorname{Rep}(\mathbb{Z}/2\mathbb{Z})$ subcategory in $\color{blue} A_{2N+1}$ $\boxtimes$ $\color{red} A_{2N+1}^\text{bop}$ that we are interested in is generated by the simple object $\color{blue} f^{(2N)}$ $\boxtimes$ $\color{red} f^{(2N)}$. The centralizer of this subcategory is the subcategory of  $\color{blue} A_{2N+1}$ $\boxtimes$ $\color{red} A_{2N+1}^\text{bop}$ generated by the simple object $\color{blue}f^{(1)}$ $\boxtimes$ $\color{red}f^{(1)}$. Note that $\langle$$\color{blue}f^{(1)}$ $\boxtimes$ $\color{red}f^{(1)}$$\rangle$ is the tensor product planar algebra of $\color{blue} A_{2N+1}$ with $\color{red} A_{2N+1}^\text{bop}$ (see \cite[Section 2.3]{math.QA/9909027} for details on the tensor product of planar algebras).

The planar algebra for $\langle$$\color{blue}f^{(1)}$ $\boxtimes$ $\color{red}f^{(1)}$$\rangle$ can easily be described using basis vectors for the vector spaces. Let $\{t_i\}$ be the standard Temperley-Lieb basis of planar diagrams for $P_n$ of the planar algebra associated to $A_{2N+1}$. Then a basis for $P_n$ for the planar algebra of $\langle$$\color{blue}f^{(1)}$ $\boxtimes$ $\color{red}f^{(1)}$$\rangle$ are the vectors $\{$$\color{blue}t_i$$ \boxtimes$ $\color{red}t_j$$ \}$. Diagrammatically we can think of these basis vectors as a superposition of the two original basis vectors. Unfortunately this simple basis description of the planar algebra is not useful for computing automorphisms nor for taking de-equivariantizations. We want to find a generators and relations description (as in \cite{MR2577673,MR2559686}) as it is much better suited for these tasks.

The single strand in the $\langle$$\color{blue}f^{(1)}$ $\boxtimes$ $\color{red}f^{(1)}$$\rangle$ planar algebra is going to be the superposition of the red and blue strands, that is:
\begin{center}
\begin{tikzpicture}[baseline={([yshift=-.5ex]current bounding box.center)}]
\draw [thick] (0,0) -- (0,1);
\end{tikzpicture} := \begin{tikzpicture}[baseline={([yshift=-.5ex]current bounding box.center)}]
\draw [blue,thick] (0,0) -- (0,1);
\draw [red,thick] (0.05,0) -- (0.05,1);
\end{tikzpicture}.
\end{center}

Unfortunately just the strand does not generate the entire planar algebra, for example we can not construct the diagram 
\begin{center}\begin{tikzpicture}[baseline={([yshift=-.5ex]current bounding box.center)}]
\draw [red,thick] (0,0) arc [radius=.707,start angle =135, end angle =45];
\draw [red,thick] (0,1) arc [radius=.707,start angle = 225, end angle = 315];
\draw [blue,thick](0,0) arc [radius  = -.707,start angle = 135, end angle = 225];
\draw [blue,thick](1,0) arc [radius  = .707,start angle = 225, end angle = 135];
\end{tikzpicture} $\in P_4$\end{center}
using just the single strand. However it is proven in \cite[Theorem 3.1]{1308.5656} that the element
\begin{center} \begin{tikzpicture}[baseline={([yshift=-.5ex]current bounding box.center)}]
 \node [thick, rectangle, draw, minimum size = 29] at (0,0) (Z1) {$Z$};
\draw [thick] (-.4,-.5) arc [radius = .3, start angle = 0, end angle = -45];
\draw [thick] (.4,-.5) arc [radius = .3, start angle = 180, end angle = 225];
\draw [thick] (-.4,.5) arc [radius = -.3, start angle = 180, end angle = 225];
\draw [thick] (.4,.5) arc [radius = -.3, start angle = 0, end angle = -45]; 
 \end{tikzpicture} $:= [2]_q^{-1}$
\begin{tikzpicture}[baseline={([yshift=-.5ex]current bounding box.center)}]
\draw [red,thick] (0,0) arc [radius=.707,start angle =135, end angle =45];
\draw [red,thick] (0,1) arc [radius=.707,start angle = 225, end angle = 315];
\draw [blue,thick](0,0) arc [radius  = -.707,start angle = 135, end angle = 225];
\draw [blue,thick](1,0) arc [radius  = .707,start angle = 225, end angle = 135];
\end{tikzpicture} $\in P_4$ \end{center}
does generate everything.

 A generators and relations presentation of the braided planar algebra for $\langle \color{blue}f^{(1)}$ $\boxtimes$ $\color{red}f^{(1)}$$\rangle$ is given by

 \begin{align*}
& (1) \quad q = e^\frac{\pi i}{2N+2}, \quad  \quad \quad \hspace{.18cm} (2)\quad  \LOOP = [2]^2, \\[.5cm]&
 (3) \quad \begin{tikzpicture}[scale = 1,baseline={([yshift=-.5ex]current bounding box.center)}]
\draw [thick] (-.5,-.5) -- (-.5,.5);
\draw [thick] (-.5,.5) -- (.5,.5);
\draw [thick] (.5,.5) -- (.5,-.5);
\draw [thick] (.5,-.5) -- (-.5,-.5);
\node () at (0,0) {$Z$};
\draw [thick] (-.4,-.5) arc [radius = .3, start angle = 0, end angle = -45];
\draw [thick] (.4,-.5) arc [radius = .3, start angle = 180, end angle = 225];
\draw [thick] (-.4,.5) arc [radius = -.4, start angle = 0, end angle = -180];
 \end{tikzpicture} =\begin{tikzpicture}[baseline={([yshift=-.5ex]current bounding box.center)}]
 \draw [thick] (0,0) arc [radius = .5, start angle = 0, end angle = 180];
 \end{tikzpicture}, \quad  (4)\quad \begin{tikzpicture}[scale = 1,baseline={([yshift=-.5ex]current bounding box.center)}]
\draw [thick] (-.5,-.5) -- (-.5,.5);
\draw [thick] (-.5,.5) -- (.5,.5);
\draw [thick] (.5,.5) -- (.5,-.5);
\draw [thick] (.5,-.5) -- (-.5,-.5);
\node () at (0,0) {$\rotatebox{90}{Z}$};
\draw [thick] (-.4,-.5) arc [radius = .3, start angle = 0, end angle = -45];
\draw [thick] (.4,-.5) arc [radius = .3, start angle = 180, end angle = 225];
\draw [thick] (-.4,.5) arc [radius = -.4, start angle = 0, end angle = -180];
 \end{tikzpicture} =\begin{tikzpicture}[baseline={([yshift=-.5ex]current bounding box.center)}]
 \draw [thick] (0,0) arc [radius = .5, start angle = 0, end angle = 180];
 \end{tikzpicture}, \quad (5) \quad \rho^2 \left (\hspace{.1cm} \zDiagram[1]\hspace{.1cm}\right ) = \zDiagram[1], \\[.5cm]&
 (6) \quad \begin{tikzpicture}[scale = 1,baseline={([yshift=-.5ex]current bounding box.center)}]
\draw [thick] (-.5,-.5) -- (-.5,.5);
\draw [thick] (-.5,.5) -- (.5,.5);
\draw [thick] (.5,.5) -- (.5,-.5);
\draw [thick] (.5,-.5) -- (-.5,-.5);
\node () at (0,0) {$Z$};
\draw [thick] (-.4,.5) arc [radius = -.3, start angle = 180, end angle = 225];
\draw [thick] (.4,.5) arc [radius = -.3, start angle = 0, end angle = -45]; 
\draw [thick] (-.5,-.5-1.25) -- (-.5,.5-1.25);
\draw [thick] (-.5,.5-1.25) -- (.5,.5-1.25);
\draw [thick] (.5,.5-1.25) -- (.5,-.5-1.25);
\draw [thick] (.5,-.5-1.25) -- (-.5,-.5-1.25);
\node () at (0,-1.25) {$Z$};
\draw [thick] (-.4,-.5) -- (-.4,-.75);
\draw [thick] (.4,-.5) -- (.4,-.75);
\draw [thick] (-.4,-.5-1.25) arc [radius = .3, start angle = 0, end angle = -45];
\draw [thick] (.4,-.5-1.25) arc [radius = .3, start angle = 180, end angle = 225];
 \end{tikzpicture} = \zDiagram, \quad  
(7)\quad  \begin{tikzpicture}[scale = 1,baseline={([yshift=-.5ex]current bounding box.center)}]
\draw [thick] (-.5,-.5) -- (-.5,.5);
\draw [thick] (-.5,.5) -- (.5,.5);
\draw [thick] (.5,.5) -- (.5,-.5);
\draw [thick] (.5,-.5) -- (-.5,-.5);
\node () at (0,0) {$\rotatebox{90}{Z}$};
\draw [thick] (-.4,.5) arc [radius = -.3, start angle = 180, end angle = 225];
\draw [thick] (.4,.5) arc [radius = -.3, start angle = 0, end angle = -45]; 
\draw [thick] (-.5,-.5-1.25) -- (-.5,.5-1.25);
\draw [thick] (-.5,.5-1.25) -- (.5,.5-1.25);
\draw [thick] (.5,.5-1.25) -- (.5,-.5-1.25);
\draw [thick] (.5,-.5-1.25) -- (-.5,-.5-1.25);
\node () at (0,-1.25) {$\rotatebox{90}{Z}$};
\draw [thick] (-.4,-.5) -- (-.4,-.75);
\draw [thick] (.4,-.5) -- (.4,-.75);
\draw [thick] (-.4,-.5-1.25) arc [radius = .3, start angle = 0, end angle = -45];
\draw [thick] (.4,-.5-1.25) arc [radius = .3, start angle = 180, end angle = 225];
 \end{tikzpicture} = \rhoZDiagram, \quad
(8) \quad \begin{tikzpicture}[scale = 1,baseline={([yshift=-.5ex]current bounding box.center)}]
\draw [thick] (-.5,-.5) -- (-.5,.5);
\draw [thick] (-.5,.5) -- (.5,.5);
\draw [thick] (.5,.5) -- (.5,-.5);
\draw [thick] (.5,-.5) -- (-.5,-.5);
\node () at (0,0) {$Z$};
\draw [thick] (-.4,.5) arc [radius = -.3, start angle = 180, end angle = 225];
\draw [thick] (.4,.5) arc [radius = -.3, start angle = 0, end angle = -45]; 
\draw [thick] (-.5,-.5-1.25) -- (-.5,.5-1.25);
\draw [thick] (-.5,.5-1.25) -- (.5,.5-1.25);
\draw [thick] (.5,.5-1.25) -- (.5,-.5-1.25);
\draw [thick] (.5,-.5-1.25) -- (-.5,-.5-1.25);
\node () at (0,-1.25) {$\rotatebox{90}{Z}$};
\draw [thick] (-.4,-.5) -- (-.4,-.75);
\draw [thick] (.4,-.5) -- (.4,-.75);
\draw [thick] (-.4,-.5-1.25) arc [radius = .3, start angle = 0, end angle = -45];
\draw [thick] (.4,-.5-1.25) arc [radius = .3, start angle = 180, end angle = 225];
 \end{tikzpicture} = [2]_q^{-2} \CupCap, \\[.5cm]&
(9)\quad  \begin{tikzpicture}[baseline={([yshift=-.5ex]current bounding box.center)}]
 \node [thick, rectangle, draw, minimum size = 29] at (0,0) (Z1) {$Z$};
 \node [thick, rectangle, draw, minimum size = 29]  at (1.5,0) (Z2) {$Z$};
\draw [thick] (.16666,.5) arc [radius = .5833,start angle = 180, end angle = 0];
\draw [thick] (-.16666,.5) -- (.16666,1.5);
\draw [thick] (1.666666,.5) -- (1.34,1.5);
\draw [thick] (.1666,-.5) -- (.16666,-.8);
\draw [thick] (-.1666,-.5) -- (-.16666,-.8);
\draw [thick] (1.666,-.5) -- (1.6666,-.8);
\draw [thick] (1.34,-.5) -- (1.34,-.8);
 \end{tikzpicture} = \begin{tikzpicture}[baseline={([yshift=-.5ex]current bounding box.center)}]
 \node [thick, rectangle, draw, minimum size = 29] at (1,0) (Z1) {$Z$};
  \node [thick, rectangle, draw, minimum size = 29] at (0,-2) (Z1) {$Z$};
  \draw  [thick] (0.1666,-2.5) -- (0.1666,-2.8);
    \draw  [thick] (-0.1666,-2.5) -- (-0.16663,-2.8);
      \draw  [thick] (1-1.1666,-1.5) -- (1-.1666,-.5);
      \draw  [thick] (1+.1666,-.5) -- (1+.1666,-2.8);
   \draw [thick] (.83333,-1.5) arc [radius = .333,start angle = 0, end angle = 180];
   \draw [thick] (1-.16666,-1.5) -- (1-.16666,-2.8);
  \draw [thick] (1.1666,.5) -- (1.166666,.8);
   \draw [thick] (1-.1666,.5) -- (1-.16666,.8); 
 \end{tikzpicture}  = \begin{tikzpicture}[baseline={([yshift=-.5ex]current bounding box.center)}]
 \node [thick, rectangle, draw, minimum size = 29] at (0,0) (Z1) {$Z$};
  \node [thick, rectangle, draw, minimum size = 29] at (1,-2) (Z1) {$Z$};
  \draw  [thick] (1.1666,-2.5) -- (1.1666,-2.8);
    \draw  [thick] (.83333,-2.5) -- (.83333,-2.8);
      \draw  [thick] (1.1666,-1.5) -- (.1666,-.5);
      \draw  [thick] (-.1666,-.5) -- (-.1666,-2.8);
   \draw [thick] (.83333,-1.5) arc [radius = .333,start angle = 0, end angle = 180];
   \draw [thick] (.16666,-1.5) -- (.16666,-2.8);
   \draw [thick] (.1666,.5) -- (.166666,.8);
   \draw [thick] (-.1666,.5) -- (-.16666,.8);
 \end{tikzpicture}, \quad  
\\[.5cm]&
 (11) \quad \Braid = \Id + \CupCap -q \zDiagram[.8] - q^{-1}\rhoZDiagram[.8]. \\[.5cm]&
 (12) \quad \begin{tikzpicture}[baseline={([yshift=-.5ex]current bounding box.center)}]
 \node [thick, rectangle, draw, minimum size = 29] at (0,0) (S1) {$\color{blue}f^{(2N+1)}$ $\boxtimes$ $\color{red}f^{(2N+1)}$};
\draw [thick] (-.9,.5) -- (-.9,.8);
\draw [thick] (.9,.5) -- (.9,.8);
\draw [thick] (-.9,-.5) -- (-.9,-.8);
\draw [thick] (.9,-.5) -- (.9,-.8);
\draw [dotted,thick] (.8,.65) -- (-.8,.65);
\draw [dotted,thick] (.8,-.65) -- (-.8,-.65);
\end{tikzpicture} = 0.
 \end{align*}

All of these relations can easily be checked to hold in $\langle\color{blue}f^{(1)}$ $\boxtimes$ $\color{red}f^{(1)}$$\rangle$ by using the definition of $Z$. We have certainly overdone the number the relations required for this planar algebra. For example relations $(3)$ + $(5)$ + $(9)$ imply relation $(6)$. We have included the additional relations as it makes evaluation easier and there is little overhead in showing that the extra relations hold for $Z$.

To prove that we have given sufficient relations we need to show that we can evaluate any closed diagram. We can think of a closed diagram in the $\langle \color{blue}f^{(1)}$ $\boxtimes$ $\color{red}f^{(1)}$$\rangle$ planar algebra as a $4$-valent graph, with the vertices being $Z$'s or $\rho(Z)$'s. Any closed $4$-valent graph must contain either a triangle, bigon or a loop. We can remove a loop with relations $3$ and $4$. We can pop bigons with relations $6$, $7$ and $8$. Thus to complete our evaluation argument all we need to do is show that triangles can be popped as well.
 
 \begin{lemma}
 Suppose $D$ is a diagram containing three $Z$'s as a triangle, then using the above relations $D$ can be reduced to a diagram with at most two $Z$'s.
 \end{lemma}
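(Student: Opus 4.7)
The plan is to case on the rotational types of the three boxes forming the triangle. By relation (5), $\rho^2(Z)=Z$, so each of the three vertices of the triangle is either a $Z$ or a $\rho(Z)$ (up to reading off which pair of opposite sides carries the cups). Up to the symmetry of the triangle we have the four combinations $ZZZ$, $ZZ\rho(Z)$, $Z\rho(Z)\rho(Z)$, and $\rho(Z)\rho(Z)\rho(Z)$. In each case we will use relation (9) to slide one of the three boxes around another, thereby turning one of the three triangle-edges into a bigon, after which we can eliminate the bigon using (6), (7), or (8).

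Concretely, fix two of the three boxes, say $Z_1$ and $Z_2$, and look at the single strand joining them together with the pair of strands that join each of them to the third box $Z_3$. Relation (9) asserts the equality of the three ways to attach two adjacent $Z$-boxes along a strand, and so it lets us redirect the $Z_1$-$Z_2$ strand so that $Z_1$ (or $Z_2$) slides along the perimeter of $Z_3$. After such a move, $Z_1$ and $Z_3$ (resp.\ $Z_2$ and $Z_3$) are connected by two strands, i.e.\ share a bigon. Depending on whether the two boxes forming the bigon are both $Z$, both $\rho(Z)$, or one of each, we now apply relation (6), (7), or (8): the first two replace two boxes with a single $Z$ or $\rho(Z)$, and the third replaces the bigon by the constant $[2]_q^{-2}\CupCap$. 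Either outcome leaves a diagram with at most two $Z$-boxes.

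The mild subtlety, which is the main bookkeeping obstacle, is that relation (9) as stated only moves a strand between two \emph{adjacent} $Z$-boxes, so one must first check that in each of the four rotational configurations the sliding move can indeed be applied to bring two of the boxes into a bigon position; this is a straightforward check using relation (5) to rotate whichever box needs to be reoriented so that its cup-sides line up with the strand being slid. In the $ZZZ$ and $\rho(Z)\rho(Z)\rho(Z)$ cases the sliding yields a same-type bigon, reducing to one box via (6) or (7); in the mixed cases either (6)/(7) or (8) applies depending on which pair of boxes is brought together, and in every case the total count of $Z$-boxes in the resulting diagram drops from three to at most two, as required.
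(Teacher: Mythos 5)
Your proof is correct and follows essentially the same route as the paper's: identify two of the three boxes forming a configuration to which relation (9) applies, use (9) to convert a triangle edge into a bigon, and then pop the bigon with relation (6), (7), or (8). The extra case analysis on rotational types and the remark about using relation (5) to reorient boxes is a more careful spelling-out of the step the paper states in one line, but it is not a different argument.
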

 \begin{proof}
In $D$ two of the $Z$'s must form one of the diagrams appearing in relation $9$. Locally apply the relation to $D$ to obtain a diagram with two $Z$'s forming a bigon. Pop this bigon using one of relations $6$, $7$, or $8$ to obtain a diagram with one or two $Z$'s.
 \end{proof}
 
 By repeatedly popping triangles, bigons, and closed loops eventually one ends up at a scalar multiple of the empty diagram.
 
To de-equivariantize the $\langle \color{blue}f^{(1)}$ $\boxtimes$ $\color{red}f^{(1)}$$\rangle$ planar algebra by the subcategory $\langle\color{blue}f^{(2N)}$ $\boxtimes$ $\color{red}f^{(2N)}$$\rangle$ we need to add an isomorphism from the tensor identity to $\color{blue}f^{(2N)}$ $\boxtimes$ $\color{red}f^{(2N)}$. In planar algebra language this corresponds to adding a generator $S\in P_{2N}$  such that $SS^{-1}$ =$\color{blue}f^{(2N)}$ $\boxtimes$ $\color{red}f^{(2N)}$$  \in P_{4N}$ and $S^{-1}S = 1\in P_0$. Note that by taking the trace of the first condition we arrive at the second.

 See \cite{MR2559686} for an example of this in which they de-equivariantize the $A_{4N-3}$ planar algebra to obtain a planar algebra for $D_{2N}$. By doing the same procedure we arrive at the following presentation of the planar algebra for the centre of $\ad(A_{2N+1})$.
 
 \begin{prop}\label{prop:PA_Centre_A_Odd}
 The braided planar algebra for the Drinfeld centre of $\ad(A_{2N+1})$ has $2$ generators $Z \in P_4$ and $S \in P_{2N}$ satisfying the following relations:
 \begin{align*}
& (1) \quad q = e^\frac{\pi i}{2N+2}, \quad  \quad \quad \hspace{.18cm}  (2)\quad  \LOOP = [2]_q^2 ,\quad \\[.5cm]&
 (3)\quad \begin{tikzpicture}[scale = 1,baseline={([yshift=-.5ex]current bounding box.center)}]
\draw [thick] (-.5,-.5) -- (-.5,.5);
\draw [thick] (-.5,.5) -- (.5,.5);
\draw [thick] (.5,.5) -- (.5,-.5);
\draw [thick] (.5,-.5) -- (-.5,-.5);
\node () at (0,0) {$Z$};
\draw [thick] (-.4,-.5) arc [radius = .3, start angle = 0, end angle = -45];
\draw [thick] (.4,-.5) arc [radius = .3, start angle = 180, end angle = 225];
\draw [thick] (-.4,.5) arc [radius = -.4, start angle = 0, end angle = -180];
 \end{tikzpicture} =\begin{tikzpicture}[baseline={([yshift=-.5ex]current bounding box.center)}]
 \draw [thick] (0,0) arc [radius = .5, start angle = 0, end angle = 180];
 \end{tikzpicture}, \quad  (4)\quad \begin{tikzpicture}[scale = 1,baseline={([yshift=-.5ex]current bounding box.center)}]
\draw [thick] (-.5,-.5) -- (-.5,.5);
\draw [thick] (-.5,.5) -- (.5,.5);
\draw [thick] (.5,.5) -- (.5,-.5);
\draw [thick] (.5,-.5) -- (-.5,-.5);
\node () at (0,0) {$\rotatebox{90}{Z}$};
\draw [thick] (-.4,-.5) arc [radius = .3, start angle = 0, end angle = -45];
\draw [thick] (.4,-.5) arc [radius = .3, start angle = 180, end angle = 225];
\draw [thick] (-.4,.5) arc [radius = -.4, start angle = 0, end angle = -180];
 \end{tikzpicture} =\begin{tikzpicture}[baseline={([yshift=-.5ex]current bounding box.center)}]
 \draw [thick] (0,0) arc [radius = .5, start angle = 0, end angle = 180];
 \end{tikzpicture}, \quad (5) \quad \rho^2\left(\hspace{.1cm}\zDiagram[1]\hspace{.1cm}\right) = \zDiagram[1] \\[.5cm]&
 (6) \quad \begin{tikzpicture}[scale = 1,baseline={([yshift=-.5ex]current bounding box.center)}]
\draw [thick] (-.5,-.5) -- (-.5,.5);
\draw [thick] (-.5,.5) -- (.5,.5);
\draw [thick] (.5,.5) -- (.5,-.5);
\draw [thick] (.5,-.5) -- (-.5,-.5);
\node () at (0,0) {$Z$};
\draw [thick] (-.4,.5) arc [radius = -.3, start angle = 180, end angle = 225];
\draw [thick] (.4,.5) arc [radius = -.3, start angle = 0, end angle = -45]; 
\draw [thick] (-.5,-.5-1.25) -- (-.5,.5-1.25);
\draw [thick] (-.5,.5-1.25) -- (.5,.5-1.25);
\draw [thick] (.5,.5-1.25) -- (.5,-.5-1.25);
\draw [thick] (.5,-.5-1.25) -- (-.5,-.5-1.25);
\node () at (0,-1.25) {$Z$};
\draw [thick] (-.4,-.5) -- (-.4,-.75);
\draw [thick] (.4,-.5) -- (.4,-.75);
\draw [thick] (-.4,-.5-1.25) arc [radius = .3, start angle = 0, end angle = -45];
\draw [thick] (.4,-.5-1.25) arc [radius = .3, start angle = 180, end angle = 225];
 \end{tikzpicture} = \zDiagram, \quad  
(7)\quad  \begin{tikzpicture}[scale = 1,baseline={([yshift=-.5ex]current bounding box.center)}]
\draw [thick] (-.5,-.5) -- (-.5,.5);
\draw [thick] (-.5,.5) -- (.5,.5);
\draw [thick] (.5,.5) -- (.5,-.5);
\draw [thick] (.5,-.5) -- (-.5,-.5);
\node () at (0,0) {$\rotatebox{90}{Z}$};
\draw [thick] (-.4,.5) arc [radius = -.3, start angle = 180, end angle = 225];
\draw [thick] (.4,.5) arc [radius = -.3, start angle = 0, end angle = -45]; 
\draw [thick] (-.5,-.5-1.25) -- (-.5,.5-1.25);
\draw [thick] (-.5,.5-1.25) -- (.5,.5-1.25);
\draw [thick] (.5,.5-1.25) -- (.5,-.5-1.25);
\draw [thick] (.5,-.5-1.25) -- (-.5,-.5-1.25);
\node () at (0,-1.25) {$\rotatebox{90}{Z}$};
\draw [thick] (-.4,-.5) -- (-.4,-.75);
\draw [thick] (.4,-.5) -- (.4,-.75);
\draw [thick] (-.4,-.5-1.25) arc [radius = .3, start angle = 0, end angle = -45];
\draw [thick] (.4,-.5-1.25) arc [radius = .3, start angle = 180, end angle = 225];
 \end{tikzpicture} = \rhoZDiagram, \quad
(8) \quad \begin{tikzpicture}[scale = 1,baseline={([yshift=-.5ex]current bounding box.center)}]
\draw [thick] (-.5,-.5) -- (-.5,.5);
\draw [thick] (-.5,.5) -- (.5,.5);
\draw [thick] (.5,.5) -- (.5,-.5);
\draw [thick] (.5,-.5) -- (-.5,-.5);
\node () at (0,0) {$Z$};
\draw [thick] (-.4,.5) arc [radius = -.3, start angle = 180, end angle = 225];
\draw [thick] (.4,.5) arc [radius = -.3, start angle = 0, end angle = -45]; 
\draw [thick] (-.5,-.5-1.25) -- (-.5,.5-1.25);
\draw [thick] (-.5,.5-1.25) -- (.5,.5-1.25);
\draw [thick] (.5,.5-1.25) -- (.5,-.5-1.25);
\draw [thick] (.5,-.5-1.25) -- (-.5,-.5-1.25);
\node () at (0,-1.25) {$\rotatebox{90}{Z}$};
\draw [thick] (-.4,-.5) -- (-.4,-.75);
\draw [thick] (.4,-.5) -- (.4,-.75);
\draw [thick] (-.4,-.5-1.25) arc [radius = .3, start angle = 0, end angle = -45];
\draw [thick] (.4,-.5-1.25) arc [radius = .3, start angle = 180, end angle = 225];
 \end{tikzpicture} = [2]_q^{-2} \CupCap, \\[.5cm]&
(9)\quad  \begin{tikzpicture}[baseline={([yshift=-.5ex]current bounding box.center)}]
 \node [thick, rectangle, draw, minimum size = 29] at (0,0) (Z1) {$Z$};
 \node [thick, rectangle, draw, minimum size = 29]  at (1.5,0) (Z2) {$Z$};
\draw [thick] (.16666,.5) arc [radius = .5833,start angle = 180, end angle = 0];
\draw [thick] (-.16666,.5) -- (.16666,1.5);
\draw [thick] (1.666666,.5) -- (1.34,1.5);
\draw [thick] (.1666,-.5) -- (.16666,-.8);
\draw [thick] (-.1666,-.5) -- (-.16666,-.8);
\draw [thick] (1.666,-.5) -- (1.6666,-.8);
\draw [thick] (1.34,-.5) -- (1.34,-.8);
 \end{tikzpicture} = \begin{tikzpicture}[baseline={([yshift=-.5ex]current bounding box.center)}]
 \node [thick, rectangle, draw, minimum size = 29] at (1,0) (Z1) {$Z$};
  \node [thick, rectangle, draw, minimum size = 29] at (0,-2) (Z1) {$Z$};
  \draw  [thick] (0.1666,-2.5) -- (0.1666,-2.8);
    \draw  [thick] (-0.1666,-2.5) -- (-0.16663,-2.8);
      \draw  [thick] (1-1.1666,-1.5) -- (1-.1666,-.5);
      \draw  [thick] (1+.1666,-.5) -- (1+.1666,-2.8);
   \draw [thick] (.83333,-1.5) arc [radius = .333,start angle = 0, end angle = 180];
   \draw [thick] (1-.16666,-1.5) -- (1-.16666,-2.8);
  \draw [thick] (1.1666,.5) -- (1.166666,.8);
   \draw [thick] (1-.1666,.5) -- (1-.16666,.8); 
 \end{tikzpicture}  = \begin{tikzpicture}[baseline={([yshift=-.5ex]current bounding box.center)}]
 \node [thick, rectangle, draw, minimum size = 29] at (0,0) (Z1) {$Z$};
  \node [thick, rectangle, draw, minimum size = 29] at (1,-2) (Z1) {$Z$};
  \draw  [thick] (1.1666,-2.5) -- (1.1666,-2.8);
    \draw  [thick] (.83333,-2.5) -- (.83333,-2.8);
      \draw  [thick] (1.1666,-1.5) -- (.1666,-.5);
      \draw  [thick] (-.1666,-.5) -- (-.1666,-2.8);
   \draw [thick] (.83333,-1.5) arc [radius = .333,start angle = 0, end angle = 180];
   \draw [thick] (.16666,-1.5) -- (.16666,-2.8);
   \draw [thick] (.1666,.5) -- (.166666,.8);
   \draw [thick] (-.1666,.5) -- (-.16666,.8);
 \end{tikzpicture}, \quad  
\\[.5cm]& 
 (10) \quad \begin{tikzpicture}[baseline={([yshift=-.5ex]current bounding box.center)}]
 \node [thick, rectangle, draw, minimum size = 29] at (0,0) (S1) {$S$};
 \node [thick, rectangle, draw, minimum size = 29] at (0,-1.5) (S2) {$S$};
\draw [thick] (-.3,.5) -- (-.3,.8);
\draw [thick] (.3,.5) -- (.3,.8);
\draw [dotted,thick] (-.2,.65) -- (.2,.65);
\draw [dotted,thick] (-.2,.65) -- (.2,.65);
\draw [thick] (-.3,-2) -- (-.3,-2.3);
\draw [thick] (.3,-2) -- (.3,-2.3);
\draw [dotted,thick] (-.2,-2.15) -- (.2,-2.15);
\draw [dotted,thick] (-.2,-2.15) -- (.2,-2.15);
\end{tikzpicture} = \begin{tikzpicture}[baseline={([yshift=-.5ex]current bounding box.center)}]
 \node [thick, rectangle, draw, minimum size = 29] at (0,0) (S1) {$\color{blue}f^{(2N)}$ $\boxtimes$ $\color{red}f^{(2N)}$};
\draw [thick] (-.9,.5) -- (-.9,.8);
\draw [thick] (.9,.5) -- (.9,.8);
\draw [thick] (-.9,-.5) -- (-.9,-.8);
\draw [thick] (.9,-.5) -- (.9,-.8);
\draw [dotted,thick] (.8,.65) -- (-.8,.65);
\draw [dotted,thick] (.8,-.65) -- (-.8,-.65);
\end{tikzpicture},
\quad (11) \quad \begin{tikzpicture}[baseline={([yshift=-.5ex]current bounding box.center)}]
 \node [thick, rectangle, draw, minimum size = 29] at (0,0) (S1) {$S$};
\draw [thick] (-.3,.5) -- (-.3,.8);
\draw [thick] (.3,.5) -- (.3,.8);
\draw [dotted,thick] (-.2,.65) -- (.2,.65);
\draw [dotted,thick] (-.2,.65) -- (.2,.65);
\draw [thick] circle [radius = .86];
\draw [thick] circle [radius = 1.86];
\path[decorate,decoration={text along path,
text={Any diagram with no $S$'s},text align=center}]
(-1.36,0) arc [start angle=-180,end angle=0,radius=1.4];
\end{tikzpicture} =0, \\[2ex]
 &(12) \quad \Braid = \Id + \CupCap -q \zDiagram[.8] - q^{-1}\rhoZDiagram[.8],  \\[.5cm]
 & (13) \quad \begin{tikzpicture}[baseline={([yshift=-.5ex]current bounding box.center)}]
 \node [thick, rectangle, draw, minimum size = 29] at (0,0) (S1) {$\color{blue}f^{(2N+1)}$ $\boxtimes$ $\color{red}f^{(2N+1)}$};
\draw [thick] (-.9,.5) -- (-.9,.8);
\draw [thick] (.9,.5) -- (.9,.8);
\draw [thick] (-.9,-.5) -- (-.9,-.8);
\draw [thick] (.9,-.5) -- (.9,-.8);
\draw [dotted,thick] (.8,.65) -- (-.8,.65);
\draw [dotted,thick] (.8,-.65) -- (-.8,-.65);
\end{tikzpicture} = 0.
 \end{align*}
 \end{prop}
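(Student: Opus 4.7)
The plan is to build directly on the discussion that precedes the proposition. All the hard planar-algebraic work has been done in establishing the presentation of $\langle \color{blue}f^{(1)}\color{black}\boxtimes \color{red}f^{(1)}\color{black}\rangle$; the proposition just packages the effect of adjoining the de-equivariantization isomorphism $S\colon \mathbf{1}\xrightarrow{\sim} \color{blue}f^{(2N)}\color{black}\boxtimes\color{red}f^{(2N)}\color{black}$ to that presentation. So the first step is to verify each listed relation holds in $Z(\ad(A_{2N+1}))$. Relations (1)--(9) and (12)--(13) hold in $\langle \color{blue}f^{(1)}\color{black}\boxtimes\color{red}f^{(1)}\color{black}\rangle$ by the computation sketched above (each follows from a direct application of the definition of $Z$ in terms of the two-coloured cups and caps, together with the $A_{2N+1}$ and $A_{2N+1}^{\text{bop}}$ Jones--Wenzl vanishings). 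Relation (10) is precisely the statement that $SS^*$ is the projection $\color{blue}f^{(2N)}\color{black}\boxtimes\color{red}f^{(2N)}\color{black}$, which is the definition of the de-equivariantizing isomorphism; taking the closure gives $S^*S=1$. Relation (11) encodes orthogonality of $S$ against the full subcategory generated by $Z$: after the de-equivariantization, the only new simple object in the homogeneously-graded piece containing $S$ is $S$ itself (up to dualities), so the pairing of $S$ with any $Z$-diagram of the correct valence vanishes.

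Next, I would show that these relations are sufficient, via an evaluation algorithm for closed diagrams. Any closed diagram in the putative planar algebra is built from $Z$'s, $S$'s, and strands. Because $S\in P_{2N}$ is new data whose only interaction with pure-$Z$ diagrams is through (10) and (11), the $S$'s can be handled first: by (11), any closed diagram containing an isolated $S$ vanishes, so the $S$'s must appear in pairs connected by $2N$ strands, and each such pair can be collapsed using (10) to the projection $\color{blue}f^{(2N)}\color{black}\boxtimes\color{red}f^{(2N)}\color{black}$, which expands as a linear combination of Temperley--Lieb boxes in the sub-planar algebra generated by $Z$ (via the Jones--Wenzl recursion, which is already expressible using $Z$ by the generation result of \cite{1308.5656}). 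After this reduction, one is left with a closed diagram involving only $Z$'s and strands; the Lemma proved in the preceding paragraphs (together with (3), (4), (6), (7), (8), and (9)) allows one to successively pop loops, bigons, and triangles until only a scalar remains. The braiding relation (12) is used only when reading morphisms of the underlying braided category, not in the evaluation of closed diagrams.

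Finally, I would match dimensions. The generators $Z$ and $S$ give a surjective braided planar algebra morphism from the abstractly-presented object to the concretely-defined $Z(\ad(A_{2N+1}))$ (surjectivity of the $Z$-part is \cite[Theorem 3.1]{1308.5656}, and $S$ realises the de-equivariantization). The evaluation algorithm bounds the dimensions of the box spaces by the Temperley--Lieb-like count one gets after stripping the $S$'s, and this upper bound coincides with the known dimensions of $\Hom_{Z(\ad(A_{2N+1}))}(\mathbf{1},X^{\otimes n})$ computed by applying Construction~\ref{rmk:centre_Construction} on the level of the Grothendieck ring. Equality of dimensions forces the surjection to be an isomorphism.

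The main obstacle I expect is not any single relation but the bookkeeping in the evaluation algorithm when multiple $S$'s, $Z$'s, and strands are interleaved: one needs to know that an "isolated $S$" configuration in the sense of (11) really does occur whenever the $S$'s are not directly paired, and that the pairing via (10) can always be carried out by local moves generated by the pure-$Z$ relations. This is a combinatorial topology argument on the diagram (much as in the $A_{4N-3} \rightsquigarrow D_{2N}$ de-equivariantization of \cite{MR2559686}), and the bulk of the writing will go into making that reduction precise while keeping track of $\rho^2$-eigenvalues of $S$ under the $2N$-fold rotation.
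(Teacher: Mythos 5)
Your proposal is correct in outline, and its first two stages (verifying the relations in the concrete object, then giving an evaluation algorithm that strips the $S$'s in pairs via the braiding and relation (10), kills lone $S$'s via (11), and finishes with the loop/bigon/triangle reduction for pure-$Z$ diagrams) coincide with what the paper does. Where you genuinely diverge is the final identification step. You propose a surjection-plus-dimension-count: map the presented planar algebra onto the concrete $Z(\ad(A_{2N+1}))$ and show the spanning set produced by the evaluation algorithm has cardinality equal to $\dim\Hom_{Z(\ad(A_{2N+1}))}(\mathbf{1},X^{\otimes n})$ in every box space. The paper instead observes that the presented planar algebra carries the order-two automorphism $S\mapsto -S$ whose fixed points recover $\langle \color{blue}f^{(1)}\color{black}\boxtimes\color{red}f^{(1)}\color{black}\rangle$, invokes the equivariantization/de-equivariantization bijection of \cite[Theorem 4.44]{MR2609644} to conclude the presented category is \emph{some} de-equivariantization of that category by a copy of $\Rep(\ZZ/2\ZZ)$, and then pins down which copy: for $N$ odd the copy is unique, and for $N$ even the two wrong choices are eliminated because their de-equivariantizations do not admit braidings. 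Each route buys something: the paper's argument needs only closed-diagram evaluation and no box-space dimension data, at the cost of the case analysis over copies of $\Rep(\ZZ/2\ZZ)$; yours sidesteps that case analysis entirely (since your map lands on the specific de-equivariantization by construction), but shifts the burden onto a spanning-set count in every $P_n$, which is substantially more bookkeeping than evaluating closed diagrams and is only asserted, not carried out, in your sketch. Two small cautions if you pursue your route: in a closed diagram the $S$'s are not literally ``paired by $2N$ strands''---they must first be brought adjacent using the braiding, as in the $A_{4N-3}\rightsquigarrow D_{2N}$ reduction you cite---and for box spaces with boundary a diagram containing a single $S$ need not vanish, so your standard forms must include one-$S$ diagrams and your dimension count must account for them.
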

Again we have almost certainly overdone the relations to obtain a nice evaluation algorithm. However now it is not so obvious why relation (11) should exist in this planar algebra.

Consider a diagram $D$ that only contains a single $S$. By attaching an $S$ to the bottom of relation (10) we get the relation:

\begin{center}\begin{tikzpicture}[baseline={([yshift=-.5ex]current bounding box.center)}]
 \node [thick, rectangle, draw, minimum size = 29] at (0,0) (S1) {$S$};
\draw [thick] (-.3,.5) -- (-.3,.8);
\draw [thick] (.3,.5) -- (.3,.8);
\draw [dotted,thick] (-.2,.65) -- (.2,.65);
\draw [dotted,thick] (-.2,.65) -- (.2,.65);
\end{tikzpicture}\quad=\quad\begin{tikzpicture}[baseline={([yshift=-.5ex]current bounding box.center)}]
 \node [thick, rectangle, draw, minimum size = 29] at (0,0) (S1) {$S$};
\draw [thick] (-.3,.5) -- (-.3,.8);
\draw [thick] (.3,.5) -- (.3,.8);
\draw [dotted,thick] (-.5,1.1) -- (.5,1.1);
 \node [thick, rectangle, draw, minimum size = 29] at (0,2) (S1) {$\color{blue}f^{(2N)}$ $\boxtimes$ $\color{red}f^{(2N)}$};
\draw [thick] (-.9,.5+2) -- (-.9,.8+2);
\draw [thick] (.9,.5+2) -- (.9,.8+2);
\draw [thick] (-.9,-.5+2) -- (-.9,-.8+2);
\draw [thick] (.9,-.5+2) -- (.9,-.8+2);
\draw [dotted,thick] (.8,.65+2) -- (-.8,.65+2);
\draw [thick] (-.9,1.2) -- (-.3,.8);
\draw [thick] (.9,1.2) -- (.3,.8);
\end{tikzpicture} 
\end{center}

Therefore in $D$ we can fit a $\color{blue}f^{(2N)}$ $\boxtimes$ $\color{red}f^{(2N)}$ between the $S$ and the rest of the diagram. Consider the blue component of the diagram. There must be a blue cap on $\color{blue}f^{(2N)}$ which kills the diagram.

The reader might argue that relation (11) isn't really a satisfying relation, as it really involves an infinite number of diagrams. The relation is also non-local as it is defined on entire diagrams. It appears that in the $Z(\ad(A_{2N+1}))$ planar algebra one can deduce relation (11) from relation (10). However this argument is fairly lengthy. As the above presentation is sufficient for our purposes in this paper we omit this argument and work with the less satisfying relations. As we will see soon, the planar algebra for $Z(\ad(E_6))$ has a similar unsatisfying relation. For this case we are unsure if the relation can be deduced from the others, though we suspect that it can.

 \begin{proof}[Proof of Proposition \ref{prop:PA_Centre_A_Odd}]
 To show that we have given enough relations we have to show that any closed diagram can be evaluated to a  scalar. If we have a diagram with multiple $S$'s, then using the fact that the planar algebra is braided along with relation (10) we can reduce to a diagram with $0$ or $1$ $S$'s. See \cite{MR2559686} for an in depth description on how this algorithm works. We have already shown that any diagram with $0$ $S$'s can be evaluated. Relation (11) exactly says that any diagram with $1$ $S$ is evaluated to $0$ as in such a diagram there must be a cup between two adjacent strands of $S$. 

We now claim that this planar algebra is exactly the planar algebra for the de-equivariantization of $\langle \color{blue}f^{(1)}$ $\boxtimes$ $\color{red}f^{(1)}$$\rangle$ by the copy of $\operatorname{Rep}(\mathbb{Z}/2\mathbb{Z}) \simeq \langle \color{blue}f^{(2N)}$ $\boxtimes$ $\color{red}f^{(2N)}$$\rangle$. To see this notice that the above planar algebra has an order two automorphism $S \mapsto -S$. Taking the fixed point planar algebra under this action of $\mathbb{Z}/2\mathbb{Z}$ gives the sub-planar algebra consisting of elements with an even number of $S$'s. However the braiding along with relation (10) implies that such a diagram is equal to a diagram with no $S$'s. Hence the fixed point planar algebra recovers the planar algebra for $\langle \color{blue}f^{(1)}$ $\boxtimes$ $\color{red}f^{(1)}$$\rangle$. 

Theorem 4.44 of \cite{MR2609644} states that braided fusion categories containing $\operatorname{Rep}(G)$, and $G$-crossed braided fusion categories, are in bijection via equivariantization, and de-equivariantization. As the braided tensor category associated to the above planar algebra equivariantizes to give $\langle \color{blue}f^{(1)}$ $\boxtimes$ $\color{red}f^{(1)}$$\rangle$, it follows that the braided tensor category associated to the above planar algebra is a de-equivariantization of $\langle \color{blue}f^{(1)}$ $\boxtimes$ $\color{red}f^{(1)}$$\rangle$ by some copy of $\operatorname{Rep}(\mathbb{Z}/2\mathbb{Z})$. We have to show that the copy of $\operatorname{Rep}(\mathbb{Z}/2\mathbb{Z})$ is precisely $\langle \color{blue}f^{(2N)}$ $\boxtimes$ $\color{red}f^{(2N)}$$\rangle$.

When $N$ is odd there is a unique copy of $\operatorname{Rep}(\mathbb{Z}/2\mathbb{Z})$ in $\langle \color{blue}f^{(1)}$ $\boxtimes$ $\color{red}f^{(1)}$$\rangle$ and so the claim is trivial. When $N$ is even there are 3 distinct copies, generated by the objects $ \color{blue}f^{(0)}$ $\boxtimes$ $\color{red}f^{(N)}$, $ \color{blue}f^{(N)}$ $\boxtimes$ $\color{red}f^{(0)}$, and $\color{blue}f^{(N)}$ $\boxtimes$ $\color{red}f^{(N)}$. The de-equivariantizations corresponding to these subcategories are $\langle \color{blue}f^{(1)}$ $\boxtimes$ $\color{red}f^{(1)}$$\rangle \subset$ $\color{blue}A_{2N+1}$ $\boxtimes$ $\color{red}D_{N+2}^\text{op}$, $\langle \color{blue}f^{(1)}$ $\boxtimes$ $\color{red}f^{(1)}$$\rangle \subset$ $\color{blue} D_{N+2}$ $\boxtimes$ $\color{red}A_{2N+1}^\text{op}$, and $Z(\ad(A_{2N+1}))$.

The former two categories don't admit braidings, and hence can't come from the above planar algebra. Thus the corresponding fusion category for the above planar algebra is $Z(\ad(A_{2N+1}))$.

\end{proof}

Note that when $N=1$ we have a generators and relations presentation of the planar algebra associated to $Z( \Rep( \mathbb{Z} / 2\mathbb{Z}))$, the toric code.
 
The computation for the centre of $\ad(E_6)$ is almost identical, except we now start with the $\color{blue} A_{11}$ $\boxtimes$ $\color{red} A_{3}^\text{bop}$ planar algebra. The only difference in the computation is that the object $\color{blue}f^{(10)}$ $\boxtimes$ $\color{red}f^{(2)}$ doesn't naively make sense in planar algebra language. To deal with this one has to choose an isomorphic copy of $\color{red}f^{(2)}$ living in $\color{red}P_{20}$ boxspace of the $\color{red}A_3^\text{bop}$ planar algebra. For example you can choose the projection:

\begin{center}$\color{red}\frac{1}{4}$\begin{tikzpicture}[baseline={([yshift=-.5ex]current bounding box.center)}]
\node [thick,rectangle,red,minimum size = 29,draw] at (0,0) (f2) {$f^{(2)}$};
\draw [thick,red] (0.33,.5) -- (0.33,0.7);
\draw [thick,red] (-0.33,.5) -- (-0.33,0.7);
\draw [thick,red] (0.33,-.5) -- (0.33,-0.7);
\draw [thick,red] (-0.33,-.5) -- (-0.33,-0.7);
\draw [thick,red] (.66,.7) arc [radius = -.33, start angle = 0, end angle = 180];
\draw [thick,red] (.66+1,.7) arc [radius = -.33, start angle = 0, end angle = 180];
\draw [thick,red] (.66,-.7) arc [radius = -.33, start angle = 0, end angle = -180];
\draw [thick,red] (.66+1,-.7) arc [radius = -.33, start angle = 0, end angle = -180];
\draw [thick,red] (-.66,.7) arc [radius = -.33, start angle = 180, end angle = 0];
\draw [thick,red] (-.66-1,.7) arc [radius = -.33, start angle = 180, end angle = 0];
\draw [thick,red] (-.66,-.7) arc [radius = .33, start angle = 0, end angle = 180];
\draw [thick,red] (-.66-1,-.7) arc [radius = .33, start angle = 0, end angle = 180];
 \end{tikzpicture}.\end{center}

The resulting planar algebra for the centre is as follows: 

  \begin{prop}
 The braided planar algebra for the Drinfeld centre of $\ad(E_6)$ has $2$ generators $Z \in P_4$ and $S \in P_{10}$ satisfying the following relations:
\begin{align*}
& (1) \quad q = e^\frac{\pi i}{12} , \quad  \quad  \quad \quad \hspace{.16cm}   (2)\quad  \LOOP = \sqrt{2}[2]_q ,\quad \\[.5cm]&
 (3)\quad \begin{tikzpicture}[scale = 1,baseline={([yshift=-.5ex]current bounding box.center)}]
\draw [thick] (-.5,-.5) -- (-.5,.5);
\draw [thick] (-.5,.5) -- (.5,.5);
\draw [thick] (.5,.5) -- (.5,-.5);
\draw [thick] (.5,-.5) -- (-.5,-.5);
\node () at (0,0) {$Z$};
\draw [thick] (-.4,-.5) arc [radius = .3, start angle = 0, end angle = -45];
\draw [thick] (.4,-.5) arc [radius = .3, start angle = 180, end angle = 225];
\draw [thick] (-.4,.5) arc [radius = -.4, start angle = 0, end angle = -180];
 \end{tikzpicture} =\begin{tikzpicture}[baseline={([yshift=-.5ex]current bounding box.center)}]
 \draw [thick] (0,0) arc [radius = .5, start angle = 0, end angle = 180];
 \end{tikzpicture}, \quad  (4)\quad \begin{tikzpicture}[scale = 1,baseline={([yshift=-.5ex]current bounding box.center)}]
\draw [thick] (-.5,-.5) -- (-.5,.5);
\draw [thick] (-.5,.5) -- (.5,.5);
\draw [thick] (.5,.5) -- (.5,-.5);
\draw [thick] (.5,-.5) -- (-.5,-.5);
\node () at (0,0) {$\rotatebox{90}{Z}$};
\draw [thick] (-.4,-.5) arc [radius = .3, start angle = 0, end angle = -45];
\draw [thick] (.4,-.5) arc [radius = .3, start angle = 180, end angle = 225];
\draw [thick] (-.4,.5) arc [radius = -.4, start angle = 0, end angle = -180];
 \end{tikzpicture} =\frac{[2]_q}{\sqrt{2}}\begin{tikzpicture}[baseline={([yshift=-.5ex]current bounding box.center)}]
 \draw [thick] (0,0) arc [radius = .5, start angle = 0, end angle = 180];
 \end{tikzpicture}, \quad (5) \quad \rho^2(\zDiagram[1]) = \zDiagram[1], \\[.5cm]&
 (6) \quad \begin{tikzpicture}[scale = 1,baseline={([yshift=-.5ex]current bounding box.center)}]
\draw [thick] (-.5,-.5) -- (-.5,.5);
\draw [thick] (-.5,.5) -- (.5,.5);
\draw [thick] (.5,.5) -- (.5,-.5);
\draw [thick] (.5,-.5) -- (-.5,-.5);
\node () at (0,0) {$Z$};
\draw [thick] (-.4,.5) arc [radius = -.3, start angle = 180, end angle = 225];
\draw [thick] (.4,.5) arc [radius = -.3, start angle = 0, end angle = -45]; 
\draw [thick] (-.5,-.5-1.25) -- (-.5,.5-1.25);
\draw [thick] (-.5,.5-1.25) -- (.5,.5-1.25);
\draw [thick] (.5,.5-1.25) -- (.5,-.5-1.25);
\draw [thick] (.5,-.5-1.25) -- (-.5,-.5-1.25);
\node () at (0,-1.25) {$Z$};
\draw [thick] (-.4,-.5) -- (-.4,-.75);
\draw [thick] (.4,-.5) -- (.4,-.75);
\draw [thick] (-.4,-.5-1.25) arc [radius = .3, start angle = 0, end angle = -45];
\draw [thick] (.4,-.5-1.25) arc [radius = .3, start angle = 180, end angle = 225];
 \end{tikzpicture} = \zDiagram, \quad  
(7)\quad  \begin{tikzpicture}[scale = 1,baseline={([yshift=-.5ex]current bounding box.center)}]
\draw [thick] (-.5,-.5) -- (-.5,.5);
\draw [thick] (-.5,.5) -- (.5,.5);
\draw [thick] (.5,.5) -- (.5,-.5);
\draw [thick] (.5,-.5) -- (-.5,-.5);
\node () at (0,0) {$\rotatebox{90}{Z}$};
\draw [thick] (-.4,.5) arc [radius = -.3, start angle = 180, end angle = 225];
\draw [thick] (.4,.5) arc [radius = -.3, start angle = 0, end angle = -45]; 
\draw [thick] (-.5,-.5-1.25) -- (-.5,.5-1.25);
\draw [thick] (-.5,.5-1.25) -- (.5,.5-1.25);
\draw [thick] (.5,.5-1.25) -- (.5,-.5-1.25);
\draw [thick] (.5,-.5-1.25) -- (-.5,-.5-1.25);
\node () at (0,-1.25) {$\rotatebox{90}{Z}$};
\draw [thick] (-.4,-.5) -- (-.4,-.75);
\draw [thick] (.4,-.5) -- (.4,-.75);
\draw [thick] (-.4,-.5-1.25) arc [radius = .3, start angle = 0, end angle = -45];
\draw [thick] (.4,-.5-1.25) arc [radius = .3, start angle = 180, end angle = 225];
 \end{tikzpicture} = \frac{[2]_q}{\sqrt{2}}\rhoZDiagram, \quad
(8) \quad \begin{tikzpicture}[scale = 1,baseline={([yshift=-.5ex]current bounding box.center)}]
\draw [thick] (-.5,-.5) -- (-.5,.5);
\draw [thick] (-.5,.5) -- (.5,.5);
\draw [thick] (.5,.5) -- (.5,-.5);
\draw [thick] (.5,-.5) -- (-.5,-.5);
\node () at (0,0) {$Z$};
\draw [thick] (-.4,.5) arc [radius = -.3, start angle = 180, end angle = 225];
\draw [thick] (.4,.5) arc [radius = -.3, start angle = 0, end angle = -45]; 
\draw [thick] (-.5,-.5-1.25) -- (-.5,.5-1.25);
\draw [thick] (-.5,.5-1.25) -- (.5,.5-1.25);
\draw [thick] (.5,.5-1.25) -- (.5,-.5-1.25);
\draw [thick] (.5,-.5-1.25) -- (-.5,-.5-1.25);
\node () at (0,-1.25) {$\rotatebox{90}{Z}$};
\draw [thick] (-.4,-.5) -- (-.4,-.75);
\draw [thick] (.4,-.5) -- (.4,-.75);
\draw [thick] (-.4,-.5-1.25) arc [radius = .3, start angle = 0, end angle = -45];
\draw [thick] (.4,-.5-1.25) arc [radius = .3, start angle = 180, end angle = 225];
 \end{tikzpicture} = \frac{1}{2} \CupCap, \\[.5cm]&
(9)\quad  \begin{tikzpicture}[baseline={([yshift=-.5ex]current bounding box.center)}]
 \node [thick, rectangle, draw, minimum size = 29] at (0,0) (Z1) {$Z$};
 \node [thick, rectangle, draw, minimum size = 29]  at (1.5,0) (Z2) {$Z$};
\draw [thick] (.16666,.5) arc [radius = .5833,start angle = 180, end angle = 0];
\draw [thick] (-.16666,.5) -- (.16666,1.5);
\draw [thick] (1.666666,.5) -- (1.34,1.5);
\draw [thick] (.1666,-.5) -- (.16666,-.8);
\draw [thick] (-.1666,-.5) -- (-.16666,-.8);
\draw [thick] (1.666,-.5) -- (1.6666,-.8);
\draw [thick] (1.34,-.5) -- (1.34,-.8);
 \end{tikzpicture} = \begin{tikzpicture}[baseline={([yshift=-.5ex]current bounding box.center)}]
 \node [thick, rectangle, draw, minimum size = 29] at (1,0) (Z1) {$Z$};
  \node [thick, rectangle, draw, minimum size = 29] at (0,-2) (Z1) {$Z$};
  \draw  [thick] (0.1666,-2.5) -- (0.1666,-2.8);
    \draw  [thick] (-0.1666,-2.5) -- (-0.16663,-2.8);
      \draw  [thick] (1-1.1666,-1.5) -- (1-.1666,-.5);
      \draw  [thick] (1+.1666,-.5) -- (1+.1666,-2.8);
   \draw [thick] (.83333,-1.5) arc [radius = .333,start angle = 0, end angle = 180];
   \draw [thick] (1-.16666,-1.5) -- (1-.16666,-2.8);
  \draw [thick] (1.1666,.5) -- (1.166666,.8);
   \draw [thick] (1-.1666,.5) -- (1-.16666,.8); 
 \end{tikzpicture}  = \begin{tikzpicture}[baseline={([yshift=-.5ex]current bounding box.center)}]
 \node [thick, rectangle, draw, minimum size = 29] at (0,0) (Z1) {$Z$};
  \node [thick, rectangle, draw, minimum size = 29] at (1,-2) (Z1) {$Z$};
  \draw  [thick] (1.1666,-2.5) -- (1.1666,-2.8);
    \draw  [thick] (.83333,-2.5) -- (.83333,-2.8);
      \draw  [thick] (1.1666,-1.5) -- (.1666,-.5);
      \draw  [thick] (-.1666,-.5) -- (-.1666,-2.8);
   \draw [thick] (.83333,-1.5) arc [radius = .333,start angle = 0, end angle = 180];
   \draw [thick] (.16666,-1.5) -- (.16666,-2.8);
   \draw [thick] (.1666,.5) -- (.166666,.8);
   \draw [thick] (-.1666,.5) -- (-.16666,.8);
 \end{tikzpicture}, \quad  
\\[.5cm]& 
 (10) \quad \begin{tikzpicture}[baseline={([yshift=-.5ex]current bounding box.center)}]
 \node [thick, rectangle, draw, minimum size = 29] at (0,0) (S1) {$S$};
 \node [thick, rectangle, draw, minimum size = 29] at (0,-1.5) (S2) {$S$};
\draw [thick] (-.3,.5) -- (-.3,.8);
\draw [thick] (.3,.5) -- (.3,.8);
\draw [dotted,thick] (-.2,.65) -- (.2,.65);
\draw [dotted,thick] (-.2,.65) -- (.2,.65);
\draw [thick] (-.3,-2) -- (-.3,-2.3);
\draw [thick] (.3,-2) -- (.3,-2.3);
\draw [dotted,thick] (-.2,-2.15) -- (.2,-2.15);
\draw [dotted,thick] (-.2,-2.15) -- (.2,-2.15);
\end{tikzpicture} = \begin{tikzpicture}[baseline={([yshift=-.5ex]current bounding box.center)}]
 \node [thick, rectangle, draw, minimum size = 29] at (0,0) (S1) {$\color{blue}f^{(10)}$ $\boxtimes$ $\color{red}f^{(2)}$};
\draw [thick] (-.9,.5) -- (-.9,.8);
\draw [thick] (.9,.5) -- (.9,.8);
\draw [thick] (-.9,-.5) -- (-.9,-.8);
\draw [thick] (.9,-.5) -- (.9,-.8);
\draw [dotted,thick] (.8,.65) -- (-.8,.65);
\draw [dotted,thick] (.8,-.65) -- (-.8,-.65);
\end{tikzpicture},
\quad (11) \quad \begin{tikzpicture}[baseline={([yshift=-.5ex]current bounding box.center)}]
 \node [thick, rectangle, draw, minimum size = 29] at (0,0) (S1) {$S$};
\draw [thick] (-.3,.5) -- (-.3,.8);
\draw [thick] (.3,.5) -- (.3,.8);
\draw [dotted,thick] (-.2,.65) -- (.2,.65);
\draw [dotted,thick] (-.2,.65) -- (.2,.65);
\draw [thick] circle [radius = .86];
\draw [thick] circle [radius = 1.86];
\path[decorate,decoration={text along path,
text={Any diagram with no $S$'s},text align=center}]
(-1.36,0) arc [start angle=-180,end angle=0,radius=1.4];
\end{tikzpicture} =0, \\[2ex]
 &(12) \quad \Braid = q^{-1}\Id + q\CupCap -q^2 \zDiagram[.8] - q^{-2}\rhoZDiagram[.8],  \\[.5cm]
 & (13) \quad \begin{tikzpicture}[baseline={([yshift=-.5ex]current bounding box.center)}]
 \node [thick, rectangle, draw, minimum size = 29] at (0,0) (S1) {$\color{blue}f^{(11)}$ $\boxtimes$ $\color{red}f^{(3)}$};
\draw [thick] (-.9,.5) -- (-.9,.8);
\draw [thick] (.9,.5) -- (.9,.8);
\draw [thick] (-.9,-.5) -- (-.9,-.8);
\draw [thick] (.9,-.5) -- (.9,-.8);
\draw [dotted,thick] (.8,.65) -- (-.8,.65);
\draw [dotted,thick] (.8,-.65) -- (-.8,-.65);
\end{tikzpicture} = 0.
 \end{align*}
 \end{prop}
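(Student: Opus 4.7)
My plan is to mirror the proof of Proposition~\ref{prop:PA_Centre_A_Odd} as closely as possible, working through Construction~\ref{rmk:centre_Construction} on the level of planar algebras with $C = E_6$ and $G = \mathbb{Z}/2\mathbb{Z}$. Since $Z(E_6) \simeq \color{blue}A_{11}\color{black} \boxtimes \color{red}A_{3}^{\text{bop}}$, the distinguished copy of $\operatorname{Rep}(\mathbb{Z}/2\mathbb{Z})$ consisting of objects that restrict to direct sums of the tensor identity in $E_6$ is generated by $\color{blue}f^{(10)}\color{black}\boxtimes \color{red}f^{(2)}$, and its centraliser is the subcategory generated by $\color{blue}f^{(1)}\color{black}\boxtimes \color{red}f^{(1)}$, which is the tensor product of the two planar algebras. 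I would first give a generators-and-relations presentation of $\langle \color{blue}f^{(1)}\color{black}\boxtimes \color{red}f^{(1)}\rangle$ using the single generator $Z$, obtained just as in the $A_{2N+1}$ case but with the parameters adjusted: the loop now has value $\sqrt{2}[2]_q$ (the product of the two quantum dimensions at the appropriate roots of unity), and the various constants in relations (4), (7), (8) and the braiding (12) are scaled to account for the fact that $\color{blue}A_{11}$ and $\color{red}A_{3}^{\text{bop}}$ use different primitive roots and different braidings.

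A small new wrinkle is that $\color{red}f^{(2)} \in \color{red}A_3^{\text{bop}}$ naturally sits in $\color{red}P_2$, not $\color{red}P_{10}$, so to even make sense of $\color{blue}f^{(10)}\color{black}\boxtimes \color{red}f^{(2)} \in P_{10}$ in the tensor product planar algebra one has to pick an explicit projection in $\color{red}P_{10}$ isomorphic to $\color{red}f^{(2)}$ (for example the one exhibited above using the cup-cap pattern scaled by $\color{red}\tfrac{1}{4}$); the data of this choice is absorbed into the coefficients in relations (4), (7), (8). Once this tensor-product planar algebra is in hand, I would show (as in the proof of Proposition~\ref{prop:PA_Centre_A_Odd}) that any closed diagram built only from $Z$'s and strings can be evaluated by repeatedly popping loops via (3)--(4), bigons via (6)--(8), and triangles via the standard argument: any triangle of $Z$'s contains two $Z$'s forming one of the pictures in relation (9), which reduces it to a bigon that can then be popped.

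Next I would de-equivariantize by the $\operatorname{Rep}(\mathbb{Z}/2\mathbb{Z})$ subcategory generated by $\color{blue}f^{(10)}\color{black}\boxtimes \color{red}f^{(2)}$ by introducing a generator $S \in P_{10}$ satisfying relation (10), exactly as the $D_{2N}$-planar algebra is obtained from $A_{4N-3}$ in \cite{MR2559686} and as we did for $Z(\ad(A_{2N+1}))$. The extra relation (11), that any single-$S$ closed diagram vanishes, must be imposed by hand as in the odd-$A$ case, and it suffices to extend our evaluation algorithm: for diagrams with $\geq 2$ generators $S$, braiding plus relation (10) collapses pairs of $S$'s into an $\color{blue}f^{(10)}\color{black}\boxtimes \color{red}f^{(2)}$ strand (which is built from $Z$'s and caps), so one reduces to the case of $0$ or $1$ $S$'s, both of which are already handled.

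Finally, I would identify the resulting braided planar algebra with $Z(\ad(E_6))$. The argument copies the one in Proposition~\ref{prop:PA_Centre_A_Odd}: the involution $S \mapsto -S$ realises a $\mathbb{Z}/2\mathbb{Z}$-action whose fixed-point sub-planar algebra is exactly $\langle \color{blue}f^{(1)}\color{black}\boxtimes \color{red}f^{(1)}\rangle$, so by \cite[Theorem 4.44]{MR2609644} the associated braided fusion category is a de-equivariantization of this centraliser by \emph{some} copy of $\operatorname{Rep}(\mathbb{Z}/2\mathbb{Z})$; one then rules out the other candidate copies on the grounds that the corresponding de-equivariantizations fail to admit a braiding (or, equivalently, are not centres). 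The hard part of the proof is verifying that the proposed list of relations is consistent and complete; concretely, checking that the relations of the form (6)--(9) really do allow one to pop every bigon and triangle in the presence of the new scalar factors $\sqrt{2}$ and $\tfrac{[2]_q}{\sqrt{2}}$ without collapsing the algebra, and confirming that relation (11) is genuinely needed and not implied by (10) together with the other relations.
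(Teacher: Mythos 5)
Your proposal follows essentially the same route as the paper, which simply observes that the argument is almost identical to the proof of Proposition~\ref{prop:PA_Centre_A_Odd}. The only difference is that your final step of ruling out other candidate copies of $\operatorname{Rep}(\mathbb{Z}/2\mathbb{Z})$ is vacuous here: $\color{blue}f^{(10)}\color{black}\boxtimes\color{red}f^{(2)}$ generates the \emph{unique} such copy in $\color{blue}A_{11}\color{black}\boxtimes\color{red}A_{3}^{\text{bop}}$ (the other nontrivial invertibles have twist $-1$), so the identification of the de-equivariantization is immediate.
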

\begin{proof}
As mentioned this proof is almost identical to the $Z(\ad(A_{2N+1}))$ case. In fact there is a unique copy of $\operatorname{Rep}(\mathbb{Z}/2\mathbb{Z})$ in $\color{blue} A_{11}$ $\boxtimes$ $\color{red} A_{3}^\text{bop}$ which causes this case to be simpler.
\end{proof}
\begin{rmk}
Note that we haven't explicitly given descriptions of $\color{blue}f^{(2N)}$ $\boxtimes$ $\color{red}f^{(2N)}$ and $\color{blue}f^{(10)}$ $\boxtimes$ $\color{red}f^{(2)}$ in terms of the generator $Z$. This is because these descriptions are particularly nasty to write down. We just appeal to the existence of such a description from \cite[Theorem 4]{MR1815260}. If the reader requires a better description one can be obtained by applying the Jones-Wenzl recursion formula to both idempotents and rewriting the resulting recursive formula in terms of $Z$'s and $\rho(Z)$'s.
\end{rmk}

\section{Planar algebra auto-morphisms and auto-equivalences of tensor categories}\label{sec:autos}
\subsection{Planar Algebra Automorphisms}
As mentioned earlier in this paper we wish to compute the auto-equivalence group of a fusion category by studying the associated planar algebra. In \cite{1607.06041} the authors show an equivalence between the categories of (braided) planar algebras and the category of (braided) pivotal categories with specified generating object. As described in that paper the latter category is really a 2-category, with 1-morphisms being pivotal functors that fix the specified generating object up to isomorphism. The 2-morphisms are natural transformations such that the component on the generating object is the identity of the generating object. It is shown that there is at most one 2-morphism between any pair of 1-morphisms, and further that it must be an isomorphism. Thus there is no loss of generality in truncating to a 1-category. See \cite[Definition 3.4]{1607.06041} for details. We specialise their result as Proposition~\ref{prop:PA_to_FC}.

The (braided) fusion categories that we wish to compute the auto-equivalence groups for satisfy the necessary conditions to apply the above equivalence of categories (admit pivotal structures and generated by a specified object). However the restriction that the component of a natural transformation is the identity turns out to be too strong. Thus planar algebra homomorphisms which are non-equal can be mapped to isomorphic functors of the associated pivotal tensor categories. To fix this we have to work out explicitly which planar algebra automorphisms are mapped to isomorphic auto-equivalences. In an appendix to this paper we define planar algebra natural transformations in such a way so the group of planar algebra automorphisms is isomorphic to the group of auto-equivalences of the associated based pivotal category.

The aim of this paper is to compute the Brauer-Picard groups of the $ADE$ fusion categories via braided auto-equivalences of their centres. The above theorem suggests that we should begin by studying the braided automorphisms of the planar algebras associated to the centres. To remind the reader these are $A_N$, $\ad(A_{2N})$, $\ad(D_{2N})$, $Z(\ad(A_{2N+1}))$, and $Z(\ad(E_6))$. Note that many of the centres appear as Deligne products of these categories e.g. $Z(E_6) \simeq A_{11}\boxtimes A_3^\text{op}$. We deal with such products in Section~\ref{sec:BrPic}.


Roughly speaking a planar algebra automorphism is a collection of vector space automorphisms, one for each box space, that commute with the action of planar tangles. More details on planar algebra automorphisms can be found in \cite{math.QA/9909027}. As we have described the planar algebras we are interested in terms of generators and relations it is enough to determine how the automorphisms act on the generators, as any element of the planar algebra will be a sum of diagrams of generators connected by planar tangle. This makes the $A_N$ case particularly easy as this planar algebra has no generators.

\begin{lemma}\label{lem:pa_Autos_AN}
The planar algebras for $A_N$ have no non-trivial automorphisms.
\end{lemma}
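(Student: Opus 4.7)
The plan is straightforward and reduces to the skein-theoretic observation that the $A_N$ planar algebra is built purely from the planar operad action, with no generating elements. Concretely, a (braided) planar algebra automorphism of $A_N$ is, by definition, a collection of $\mathbb{C}$-linear isomorphisms $\phi_n : P_n \to P_n$ intertwining the action of every planar tangle (and, in the braided case, commuting with the distinguished braiding element).

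First I would recall the presentation: $A_N$ has loop parameter $[2]_q$, no generators, and the single relation $f^{(N)} = 0$. Consequently, for every $n$, the box space $P_n$ is spanned by Temperley-Lieb diagrams (those not in the ideal generated by $f^{(N)}$). Next I would observe that each such Temperley-Lieb diagram $T \in P_n$ is obtained by applying a planar tangle with \emph{no} input disks to the empty tuple of inputs. Because $\phi$ is required to intertwine the action of every planar tangle, and a $0$-input tangle carries no data for $\phi$ to act on, we must have $\phi_n(T) = T$ for every Temperley-Lieb diagram $T$. Extending by $\mathbb{C}$-linearity of $\phi_n$ gives $\phi_n = \operatorname{id}_{P_n}$, and so $\phi$ is trivial.

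There is essentially no obstacle here; the only thing to be careful about is the convention that planar algebra morphisms are $\mathbb{C}$-linear (so that no exotic field automorphisms enter through $P_0 \cong \mathbb{C}$), which is the standard convention in \cite{math.QA/9909027}. The same argument will be invoked implicitly throughout the rest of the section whenever part of a box space is spanned by pure Temperley-Lieb diagrams: any planar algebra automorphism must fix that part pointwise, so only the action on the genuine generators $Z$, $S$, $T$, etc.\ needs to be analyzed.
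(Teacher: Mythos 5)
Your proof is correct and follows essentially the same route as the paper: since $A_N$ has no generators, every box space is spanned by Temperley-Lieb diagrams, which are images of input-free planar tangles and hence fixed by any planar algebra automorphism. The paper's proof is just a two-sentence version of this same observation.
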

\begin{proof}
Any diagram in the $A_N$ planar algebra is entirely planar tangle. As automorphisms of planar algebras have to commute with planar tangles, only the identity automorphism can exist.
\end{proof}

Unfortunately the other cases are not as easy.
\begin{lemma}\label{lem:pa_Autos_Ad_A2N}
 There are two braided planar algebra automorphisms of $\ad(A_{N})$. When lifted to the braided fusion category $\ad(A_{N})$ the corresponding auto-equivalences are naturally isomorphic.
 \end{lemma}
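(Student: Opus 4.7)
The plan is to exploit the generators-and-relations presentation of the $\ad(A_N)$ planar algebra: since $T \in P_3$ is the unique generator, any braided planar algebra endomorphism is determined by its image on $T$, and must preserve all the listed relations. First I would observe that the space of uncappable, rotationally invariant elements of $P_3$ is one-dimensional (in the tensor category picture, $\dim \Hom(\mathbf{1}, X^{\otimes 3}) = 1$ for $X = f^{(2)}$ in $A_N$, away from small exceptional cases), spanned by $T$. Consequently any planar algebra automorphism $\varphi$ must send $T$ to $cT$ for some nonzero scalar $c \in \mathbb{C}$.

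Next I would pin down $c$ using a relation quadratic in $T$. Apply $\varphi$ to relation (5) (the $\theta$-relation)
\[
\begin{tikzpicture}[scale = .5,baseline={([yshift=-.5ex]current bounding box.center)}]
\draw [thick] (0,0) -- (0,1);
\draw [thick] (0,1) -- (.707,1.707);
\draw [thick] (0,1) -- (-.707,1.707);
\draw [thick]  (.707,1.707) -- (0,2.414);
\draw [thick]  (-.707,1.707) -- (0,2.414);
\draw [thick] (0,2.414)--(0,3.414);
\end{tikzpicture}
\;=\; \frac{[3]_q-1}{[2]_q}\;
\begin{tikzpicture}[scale = .5,baseline={([yshift=-.5ex]current bounding box.center)}]
\draw [thick] (0,0) -- (0,3.414);
\end{tikzpicture}.
\]
The left-hand side contains two copies of $T$ and thus is scaled by $c^{2}$, while the right-hand side is a nonzero pure planar tangle, which is fixed. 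Hence $c^{2}=1$, so $c = \pm 1$. Both choices evidently preserve relations (3)-(8) (each involves an even number of $T$'s, or none), and the braiding, being a linear combination of pure tangles with no $T$'s, is preserved automatically. This gives exactly the two braided planar algebra automorphisms $\id$ and $\varphi\colon T \mapsto -T$.

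For the second assertion I would construct an explicit tensor natural isomorphism $\eta \colon F_{\id} \Rightarrow F_{\varphi}$ between the associated auto-equivalences of $\ad(A_N)$. Define $\eta_X := -\id_X$ on the generating object $X = f^{(2)}$, extended monoidally by $\eta_{X^{\otimes n}} := (-1)^n \id_{X^{\otimes n}}$, and $\eta_{\mathbf{1}} := \id_{\mathbf{1}}$. It suffices to check naturality on the generating morphisms. For the cup/cap (living in $\Hom(\mathbf{1},X^{\otimes 2})$) naturality reduces to $(-1)^{2}=1$, which holds. For the trivalent vertex $T \in \Hom(\mathbf{1},X^{\otimes 3})$ the naturality square becomes $(-1)^{3} T = -T$, which also holds. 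Since $F_{\id}$ and $F_{\varphi}$ agree on objects and their tensorators are both the identity (planar algebra tensor product is disjoint union), this data assembles into a monoidal natural isomorphism, proving $F_{\id} \simeq F_{\varphi}$.

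The main subtlety is the claim that $P_3$ is one-dimensional on uncappable rotationally-invariant elements; this needs to be verified directly from the Temperley-Lieb description of $f^{(2)} \otimes f^{(2)} \otimes f^{(2)}$, and one should also check the small cases (e.g.\ $N = 3$) to ensure no extra automorphisms arise. Everything else is a bookkeeping exercise in the given relations.
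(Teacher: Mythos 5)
Your proof is correct and takes essentially the same route as the paper: one-dimensionality of $P_3$ forces $\phi(T)=\alpha T$, the bigon relation (5) pins down $\alpha=\pm 1$, and the natural isomorphism with component $(-1)^n$ on $n$-strand objects is exactly the one the paper constructs (the paper verifies naturality by noting that the parity of the number of $T$'s in a morphism in $P_{n_1+n_2}$ equals the parity of $n_1+n_2$, rather than checking only on generating morphisms, but these amount to the same bookkeeping).
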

 
 \begin{proof}
 Let $\phi$ be a planar algebra automorphism of $\ad(A_{N})$. As the $\ad(A_{N})$ planar algebra has the single generator $T \in P_{3}$ it is enough to consider how $\phi$ behaves on $T$. As $P_{3}$ is one dimensional it follows that $\phi(T) = \alpha T$ for some $\alpha \in \mathbb{C}$. As $\phi$ must fix the single strand we can apply $\phi$ to relation (5) to show that $\alpha \in \{1,-1\}$. It can be verified that $\phi(T) = -T$ is consistent with the other relations and hence determines a valid planar algebra automorphism.
 
 We now claim that the auto-equivalence of the $\ad(A_{N})$ fusion category generated by this planar algebra automorphism is naturally isomorphic to the identity. Let $(n_1,p_1)$ and $(n_2,p_2)$ be objects of the $\ad(A_{N})$ fusion category, where $n_i \in \mathbb{N}$ and $p_i$ are projections in $P_{n_i}$. We define the components of our natural isomorphism to be $\tau_{(n,p)} := (-1)^n$. We need to verify that the following diagram commutes for any morphism $f: (n_1,p_1) \to (n_2,p_2) $ :
 
$$\begin{CD}
(n_1,p_1)@>\phi(f)>> (n_2,p_2) \\
@VV{(-1)^{n_1}}V @VV(-1)^{n_2}V\\
(n_1,p_1)@>f>> (n_2,p_2)
\end{CD}$$

Recall that $f \in P_{n_1+n_2}$. First suppose that $n_1+n_2$ is even, then the number of $T$ generators in $f$ must be even, thus $\phi(f) = f$. However in this case we also have that $(-1)^{n_1} = (-1)^{n_2}$ and so the above diagram commutes. If $n_1+n_2$ is odd, then the number of $T$ generators in $f$ must be odd, thus $\phi(f) = -f$. However in this case we also have that $(-1)^{n_1} = -(-1)^{n_2}$ and so the above diagram commutes.
 \end{proof} 
 
  \begin{lemma}\label{lem:PA_Autos_of_1/2D_{2n}}
 There are four braided planar algebra automorphisms of $\ad(D_{2N})$. When lifted to the braided fusion category $\ad(D_{2N})$ two of the auto-equivalences are naturally isomorphic to the other two. Furthermore the non-trivial automorphism lifts to a non-gauge braided auto-equivalence of the category $\ad(D_{2N})$.
 \end{lemma}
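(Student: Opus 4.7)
The strategy mirrors the proof of Lemma~\ref{lem:pa_Autos_Ad_A2N}. First I would pin down the possible images of the two generators under a braided planar algebra automorphism $\phi$. The space $P_3$ in the $\ad(D_{2N})$ planar algebra is one-dimensional (spanned by $T$, since three-boundary Temperley--Lieb diagrams do not exist), so $\phi(T) = \alpha T$ for some $\alpha \in \mathbb{C}$; applying $\phi$ to the quadratic $T$-relation (relation (8) of the presentation) forces $\alpha^2 = 1$, i.e.\ $\alpha \in \{\pm 1\}$. For $S \in P_{2N-2}$, the fact that $\phi$ preserves rotational eigenvalue $-1$, uncappability and the relation $S\otimes S = f^{(N-1)}$ yields $\phi(S) = \beta S$ with $\beta^2 = 1$. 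A direct inspection of the remaining relations shows that all four choices $(\alpha,\beta) \in \{\pm 1\}^2$ are simultaneously consistent, and that the braided structure is preserved: the only appearance of $T$ in the braiding relation is through the two-vertex diagram, which picks up a scalar $\alpha^2 = 1$.

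Second, to reduce four planar algebra automorphisms to two auto-equivalences, I would exhibit a natural isomorphism between $\phi_{-1,1}$ and the identity functor, exactly as in Lemma~\ref{lem:pa_Autos_Ad_A2N}. Define $\tau_{(n,p)} = (-1)^n$; since $T$ has three (odd) endpoints and $S$ has $2N-2$ (even) endpoints, any morphism $f \in P_{n_1+n_2}$ contains a number of $T$-generators of parity $n_1+n_2$, so the naturality square collapses. Composing with this kernel element gives $\phi_{-1,-1} \cong \phi_{1,-1}$ and $\phi_{1,1} \cong \phi_{-1,1}$, cutting the count from four to (at most) two.

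Third, I would show $\phi_{1,-1}$ is a genuine, non-gauge braided auto-equivalence by computing its action on the fork objects. Since $P = \tfrac{1}{2}(f^{(2N-2)} + S)$ and $Q = \tfrac{1}{2}(f^{(2N-2)} - S)$, the substitution $S \mapsto -S$ interchanges the projections $P$ and $Q$; because these are non-isomorphic simple objects of $\ad(D_{2N})$, the induced map on the Grothendieck ring is non-trivial, so the auto-equivalence is not gauge. This produces the second equivalence class and completes the count. The main obstacle is the step $\phi(S) = \pm S$: an uncappable element of $P_{2N-2}$ with rotational eigenvalue $-1$ need not a priori be proportional to $S$, so one must argue (either by analyzing minimal central projections in $P_{2N-2}$, or by using Proposition~\ref{prop:PA_to_FC} together with the known outer automorphism structure of $\ad(D_{2N})$) that $S$ is the essentially unique such generator, thereby ruling out more exotic planar algebra automorphisms.
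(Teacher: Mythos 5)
Your proposal follows essentially the same route as the paper: pin down $\phi(T)=\pm T$ from the quadratic $T$-relation, pin down $\phi(S)=\pm S$, kill the sign on $T$ by the natural isomorphism $\tau_{(n,p)}=(-1)^n$ exactly as in Lemma~\ref{lem:pa_Autos_Ad_A2N}, and observe that $S\mapsto -S$ swaps $P$ and $Q$ and hence is not gauge. The one step you flag as the ``main obstacle'' --- that an uncappable element of $P_{2N-2}$ need not a priori be a multiple of $S$ --- is closed in the paper much more cheaply than by either of your suggested routes: one simply uses the vector space decomposition $P_{2N-2}\cong TL_{2N-2}\oplus\mathbb{C}S$, writes $\phi(S)=f+\beta S$ with $f\in TL_{2N-2}$, deduces uncappability of $\phi(S)$ from $\phi(S)\otimes\phi(S)=f^{(N-1)}$, and concludes $f=0$ because the only uncappable Temperley--Lieb element is zero (the Jones--Wenzl idempotent, viewed in a box space, is always cappable at the middle position since its partial trace is nonzero). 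With that observation inserted, your argument is complete and agrees with the paper's.
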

 \begin{proof}
 Let $\phi$ be a planar algebra automorphism of $\ad(D_{2N})$. Recall the planar algebra $\ad(D_{2N})$ has two generators $T \in P_3$ and $S \in P_{2N-2}$. As $P_3$ is one dimensional it follows that $\phi(T) = \alpha T$ for some $\alpha \in \mathbb{C}$. Applying relation (7) shows us that $\alpha \in \{1,-1\}$.
 
 The vector space $P_{2N-2}$ can be written $TL_{2N-2} \oplus \mathbb{C}S$, thus $\phi(S) = f + \beta S$ where $f \in TL_{2N-2}$ and $\beta \in \mathbb{C}$. We can use relation (10) to show that $\phi(S)$ must be uncappable, and hence $f$ must be 0 as there are no non-trivial uncappable Temperley-Lieb elements. Further relation (10) now shows that $\beta \in \{1,-1\}$. It can be verified that any combination of choices of $\alpha$ and $\beta$ gives a valid planar algebra automorphism. This gives us four automorphisms.
 
 However as monoidal auto-equivalences, the automorphisms which send $T \mapsto -T$ are naturally isomorphic to the corresponding automorphisms which leaves $T$ fixed. The proof of this is identical to the argument used in the previous proof. Thus we get that as a fusion category $\ad(D_{2N})$ has a single non-trivial auto-equivalence. We can see that this non-trivial auto-equivalence is not a gauge auto-equivalence as it exchanges the simple objects $P$ and $Q$.
 \end{proof}

   \begin{lemma}\label{lem:pa_Autos_Centre_Ad_A2N+1}
 There are two braided automorphisms of the $Z(\ad(A_{2N+1}))$ planar algebra. The non-trivial automorphism lifts to a non-gauge braided auto-equivalence of $Z(\ad(A_{2N+1}))$.
  \end{lemma}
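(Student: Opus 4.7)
The plan is to mimic the argument used for $\ad(D_{2N})$ in Lemma~\ref{lem:PA_Autos_of_1/2D_{2n}}, exploiting the generators-and-relations presentation given in Proposition~\ref{prop:PA_Centre_A_Odd}. Any braided planar algebra automorphism $\phi$ is determined by its values on the two generators $Z\in P_4$ and $S\in P_{2N}$, so the task is to constrain $\phi(Z)$ and $\phi(S)$.

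First I would treat $Z$. The strategy is to show that the subspace of $P_4$ consisting of elements satisfying the constraints imposed on $Z$ by relations (3)--(9) is one-dimensional (equivalently, to observe that the half-braiding encoded by $Z$ is essentially unique up to scalar, which follows from the fact that the sub-planar algebra generated by the strand alone is the tensor product planar algebra $\langle \color{blue}f^{(1)}\boxtimes\color{red}f^{(1)}\rangle$, which is uniquely characterised inside the centre). Hence $\phi(Z)=\alpha Z$ for some scalar $\alpha$. Applying $\phi$ to relation (3), whose right-hand side is a pure Temperley--Lieb cup, forces $\alpha=1$, so $\phi(Z)=Z$, and the entire sub-planar algebra generated by $Z$ is fixed.

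Next I would treat $S$. Since $\phi$ fixes the strand and $Z$, and since the only $(\ker$ of all cappings$)$ element in $P_{2N}$ not expressible in terms of $Z$'s and strands is a scalar multiple of $S$ (this is a dimension count inside the centre: the simple summand $\color{blue}f^{(2N)}\boxtimes\color{red}f^{(2N)}$ appears with multiplicity one in the relevant hom space), we must have $\phi(S)=f+\beta S$ with $f$ in the $Z$-sub-planar algebra part and $\beta\in\mathbb{C}$. Relation (11), which kills every diagram containing a single $S$ together with only $Z$'s, combined with the uncappability of $S$ forced by relation (10), shows $f=0$. Applying $\phi$ to relation (10) then gives $\beta^2=1$, so $\beta=\pm1$. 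A direct check against the remaining relations shows that both choices give consistent automorphisms, yielding exactly two braided planar algebra automorphisms.

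Finally, to see that the non-trivial automorphism $S\mapsto-S$ lifts to a non-gauge braided auto-equivalence of $Z(\ad(A_{2N+1}))$, I would identify a pair of simple objects that it exchanges. Recall from the proof of Proposition~\ref{prop:PA_Centre_A_Odd} that $Z(\ad(A_{2N+1}))$ arises as the de-equivariantization of $\langle\color{blue}f^{(1)}\boxtimes\color{red}f^{(1)}\rangle$ by the copy of $\operatorname{Rep}(\mathbb{Z}/2\mathbb{Z})$ generated by $\color{blue}f^{(2N)}\boxtimes\color{red}f^{(2N)}$. The map $S\mapsto -S$ is precisely the generator of the dual $\mathbb{Z}/2\mathbb{Z}$ action on this de-equivariantization, which acts non-trivially on any simple object arising as a summand of the image of a $\color{blue}f^{(2N)}\boxtimes\color{red}f^{(2N)}$-fixed simple of the original category. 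Explicitly exhibiting such a splitting (for instance, the image of $\color{blue}f^{(N)}\boxtimes\color{red}f^{(N)}$, or an analogous half-object depending on the parity of $N$) shows that the induced auto-equivalence permutes a pair of non-isomorphic simples, hence is not a gauge auto-equivalence. The main obstacle I anticipate is the first step--rigorously ruling out exotic candidates for $\phi(Z)$ in $P_4$; the cleanest route is probably to pass through the equivariantization and use that $\phi$ commutes with the $\mathbb{Z}/2\mathbb{Z}$-fixed-point sub-planar algebra $\langle\color{blue}f^{(1)}\boxtimes\color{red}f^{(1)}\rangle$, where $Z$ is the unique (up to normalisation) generator satisfying the exchange relator (9).
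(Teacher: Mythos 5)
Your treatment of the generator $S$ and of the lifting to a non-gauge auto-equivalence matches the paper's proof in substance: the paper likewise writes $\phi(S)=f+\beta S$ using $P_{2N}\cong TL_{2N}\boxtimes TL_{2N}\oplus\mathbb{C}S$, kills $f$ via uncappability (relations (10) and (11)), gets $\beta=\pm1$ from relation (10), and identifies the non-trivial lift by noting that $S\mapsto -S$ exchanges the two simple summands $\tfrac{1}{2}(f^{(N)}\boxtimes f^{(N)}\pm S)$.

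However, your first step has a genuine gap. You propose to pin down $\phi(Z)$ up to scalar using only relations (3)--(9), i.e.\ without invoking the braiding, claiming the subspace of $P_4$ cut out by those relations is one-dimensional. It is not: the factor-swap $Z\mapsto\rho(Z)$ (the exchange of the blue and red tensorands of $\langle f^{(1)}\boxtimes f^{(1)}\rangle$) is consistent with relations (3)--(9) --- it interchanges (3) with (4) and (6) with (7), and preserves (5), (8), (9) --- so it defines a perfectly good planar algebra automorphism of the unbraided planar algebra that moves $Z$. What rules it out is precisely that it reverses the braiding. The paper's argument uses this directly: it expands $\phi(Z)$ in the basis $\{\Id,\CupCap,Z,\rho(Z)\}$ of $P_4$ and imposes preservation of relation (12), $\Braid=\Id+\CupCap-q\,Z-q^{-1}\rho(Z)$; since $\phi$ fixes the pure Temperley--Lieb part, this forces $q\,\phi(Z)+q^{-1}\phi(\rho(Z))=q\,Z+q^{-1}\rho(Z)$, whose unique solution compatible with $\phi\circ\rho=\rho\circ\phi$ is $\phi(Z)=Z$. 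You should replace your uniqueness claim with this braiding argument (and note, as the paper does, the special case $N=2$ where $P_4$ is five-dimensional because $S\in P_4$, so the expansion of $\phi(Z)$ acquires an extra basis vector).
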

 \begin{proof}
 Let $\phi$ be a braided planar algebra automorphism of $Z(\ad(A_{2N+1}))$. As a planar algebra $Z(\ad(A_{2N+1}))$ has two generators $Z \in P_4$ and $S \in P_{2N}$. Except in the special case when $N=2$ the box space $P_4$ is $4$-dimensional, spanned by $\left\{ \ \Id[.707] \ , \ \CupCap[.707] \ , \ \zDiagram[.707] \ , \ \rhoZDiagram[.707] \ \right\}$. Thus we can write $\phi\left(\zDiagram[.707]\right)$ as a linear combination of these basis elements. As $\phi$ is a braided automorphism by definition it preserves the braiding, that is 
 \begin{equation*}
\Id[.707]+ \CupCap[.707] + q \ \zDiagram[.707] + q^{-1} \ \rhoZDiagram[.707] = \Id[.707]+ \CupCap[.707] +  q \ \phi\left(\zDiagram[.707]\right) + q^{-1} \ \phi\left(\rhoZDiagram[.707]\right).
 \end{equation*}
 Solving this equation gives the unique solution $\phi\left(\zDiagram[.707]\right) = \zDiagram[.707]$.
 
 For the $N=2$ case we repeat the same proof but with $P_4$ also having $S$ as a basis vector.
 
 Now we have to determine where $\phi$ sends the generator $S\in P_{2N}$. Recall that relation (11) shows that any diagram with a single $S$ is zero. We claim that $S$ is the only vector (up to scalar) in $P_{2N}$ with this property. Let $v\in P_{N}$ be such a vector, as $P_{2N} \cong TL_{2N}\boxtimes TL_{2N}\oplus \mathbb{C}S$ we can write $v = f_1 \boxtimes f_2 + \alpha S$ where $f_1$ and $f_2$ are elements of $TL_{2N}$. As $\alpha S$ and $v$ are uncappable, it follows that $f_1 \boxtimes f_2$ must also be. In particular this implies that both $f_1$ and $f_2$ are uncappable Temperley-Lieb diagrams, and so must be $0$. Thus $v = \alpha S$. 

Relation (10) now implies that $\phi(S) = \pm S$ as we know $\phi$ fixes $Z$ and hence the right hand side of the relation. It can be verified that $S \mapsto -S$ is consistent with the other relations. The automorphism $\phi(S) = -S$ lifts to a non-gauge braided auto-equivalence of the associated category as it exchanges the simple objects $\frac{f^{(N)}+S}{2}$ and $\frac{f^{(N)}-S}{2}$.
%
%
 \end{proof}

     \begin{lemma}\label{lem:PA_Autos_of_ZE6}
 There are two braided automorphisms of the $Z(\ad(E_6))$ planar algebra. The non-trivial automorphism lifts to a non-gauge braided auto-equivalence of $Z(\ad({E_6}))$.
 \end{lemma}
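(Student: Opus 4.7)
The approach closely mirrors the proof of Lemma~\ref{lem:pa_Autos_Centre_Ad_A2N+1}, since the planar algebra for $Z(\ad(E_6))$ has the same list of generators $Z \in P_4$ and $S \in P_{10}$ and a structurally analogous set of relations, only with $q = e^{i\pi/12}$ and different scalar coefficients.

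First I would pin down $\phi(Z)$ using the braiding. Since $S$ lives in $P_{10}$, the box space $P_4$ is four dimensional with basis given by the identity strand, the cup-cap, $Z$, and $\rho(Z)$. Writing $\phi(Z) = a \cdot \mathrm{id} + b \cdot (\text{cup-cap}) + cZ + d\rho(Z)$, using that $\phi$ fixes the strand, and invoking $\rho^2(Z) = Z$ from relation (5) to compute $\phi(\rho(Z)) = \rho(\phi(Z))$, the braiding relation (12) yields four linear equations in $a,b,c,d$. For $q = e^{i\pi/12}$ the quantities $q^2 \pm q^{-2}$ and $1 - q^8$ are all nonzero, forcing $a=b=d=0$ and $c=1$, so $\phi(Z) = Z$.

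Next I would show $\phi(S) = \pm S$. Since $\phi$ fixes every element of the no-$S$ subalgebra of $P_{10}$ (the subalgebra generated by $Z$), write $\phi(S) = \alpha S + T$ with $T$ in that subalgebra. Uncappability of $S$ forces $T$ to be uncappable as well. Mirroring the uniqueness argument of Lemma~\ref{lem:pa_Autos_Centre_Ad_A2N+1}, the no-$S$ subalgebra of $P_{10}$ is a quotient of $TL_{10} \boxtimes TL_{10}$ inside $\color{blue}A_{11} \color{black}\boxtimes \color{red}A_3^{\mathrm{bop}}$, and since $\color{red}f^{(3)} = 0$ in $\color{red}A_3^{\mathrm{bop}}$ (so $\color{blue}f^{(11)} \color{black}\boxtimes \color{red}f^{(3)} = 0$ is implied by relation (13)), the only uncappable element of this subalgebra is zero, giving $T = 0$. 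Applying $\phi$ to relation (10), whose right-hand side $\color{blue}f^{(10)} \color{black}\boxtimes \color{red}f^{(2)}$ is a polynomial in $Z$ and hence fixed, then yields $\alpha^2 = 1$. Both signs are easily checked to be consistent with the remaining relations, producing exactly two braided planar algebra automorphisms.

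Finally, to see that $S \mapsto -S$ lifts to a non-gauge auto-equivalence, I would identify the pair of simples it exchanges. In $Z(E_6) \simeq \color{blue}A_{11} \color{black}\boxtimes \color{red}A_3^{\mathrm{bop}}$, the unique simple fixed under tensoring with the order-two invertible $\color{blue}f^{(10)} \color{black}\boxtimes \color{red}f^{(2)}$ is $\color{blue}f^{(5)} \color{black}\boxtimes \color{red}f^{(1)}$, which splits in the de-equivariantization $Z(\ad(E_6))$ into two non-isomorphic simples expressible as $\tfrac{1}{2}\bigl((\color{blue}f^{(5)} \color{black}\boxtimes \color{red}f^{(1)}\color{black}) \pm S\bigr)$; the non-trivial automorphism swaps them, certifying that it is non-gauge. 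I expect the main obstacle to be the uncappability step for $T$, since the no-$S$ subalgebra of $P_{10}$ here is considerably more intricate than its analogue in the $\ad(A_{2N+1})$ setting, but the basic mechanism that uncappable Temperley--Lieb-like elements vanish once the appropriate Jones--Wenzl projector is killed applies unchanged.
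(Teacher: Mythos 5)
Your proposal is correct and follows essentially the same route as the paper: the paper's own proof of this lemma is literally a one-line deferral to the $Z(\ad(A_{2N+1}))$ argument (pin down $\phi(Z)=Z$ via the braiding relation in the four-dimensional $P_4$, show $S$ is the unique uncappable vector so relation (10) forces $\phi(S)=\pm S$, and observe the sign swaps the two simples $\tfrac{1}{2}(f^{(5)}\boxtimes f^{(1)}\pm S)$), which is exactly what you have spelled out. Your adaptation, including the identification of the exchanged simple objects consistent with the paper's Table of $Z(\ad(E_6))$ data, is faithful to the intended argument.
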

 \begin{proof}
 Almost identical to the proof of Lemma~\ref{lem:pa_Autos_Centre_Ad_A2N+1}.
 \end{proof}

 \subsection{Braided auto-equivalences}
The aim of this subsection is to leverage our knowledge of planar algebra automorphisms to compute the braided auto-equivalence group of the associated braided category. As mentioned in the introduction, for our examples, planar algebra automorphisms contain the gauge auto-equivalences of the associated braided category. The results of the previous section show that there are in fact no non-trivial gauge auto-equivalences for any of the categories we are interested in! This means that the group of braided auto-equivalences is a subgroup of the automorphism group of the fusion ring.

Our proofs to compute braided auto-equivalence group of $C$ is as follows. First we compute the fusion ring automorphisms of $C$. Then we analyse the $t$-values of the simple objects of $C$ to rule out fusion ring automorphisms that can't lift to braided auto-equivalences of $C$. Finally we construct braided auto-equivalences of $C$ to realise the remaining fusion ring automorphisms.
 
 \begin{lemma}\label{lem:brAutadAN}
 We have $\BrAut(\ad(A_{2N})) = \{e\}$.
 \end{lemma}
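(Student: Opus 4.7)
The plan is to combine the planar algebra automorphism computation from Lemma~\ref{lem:pa_Autos_Ad_A2N} with an analysis of fusion ring automorphisms, following the three-step strategy outlined at the start of this subsection.

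First I would compute the fusion ring automorphisms of $\ad(A_{2N})$. The simple objects are $f^{(0)}, f^{(2)}, \ldots, f^{(2N-2)}$, with Frobenius--Perron dimensions $[1]_q, [3]_q, \ldots, [2N-1]_q$ at $q = e^{i\pi/(2N+1)}$, the representative chosen for this Galois orbit. Using the symmetry $[k]_q = [2N+1-k]_q$, this list of dimensions coincides with the set $\{[2]_q, [4]_q, \ldots, [2N]_q\}$. These are pairwise distinct: an equality $[k]_q = [k']_q$ with $k \neq k'$ in $\{2,4,\ldots,2N\}$ would force $k + k' = 2N+1$, which is impossible as $k+k'$ is even while $2N+1$ is odd. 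Any fusion ring automorphism must preserve Frobenius--Perron dimensions, and so it must fix every isomorphism class of simple objects. Hence the fusion ring automorphism group is trivial.

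Now let $F$ be any braided auto-equivalence of $\ad(A_{2N})$. By the previous paragraph, $F$ fixes each simple up to isomorphism, and in particular fixes the generator $f^{(2)}$ up to isomorphism. Because every object of $\ad(A_{2N})$ is self-dual, the remark following Proposition~\ref{prop:PA_to_FC} applies, so $F$ is (naturally isomorphic to) a pivotal functor. Proposition~\ref{prop:PA_to_FC} then identifies $F$, up to natural isomorphism, with a braided planar algebra automorphism of $\ad(A_{2N})$. By Lemma~\ref{lem:pa_Autos_Ad_A2N}, the two braided planar algebra automorphisms both induce auto-equivalences naturally isomorphic to the identity. Therefore $F$ is naturally isomorphic to the identity, yielding $\BrAut(\ad(A_{2N})) = \{e\}$.

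There is no genuine obstacle: both key inputs are already in hand, and the only computational point is the distinctness of Frobenius--Perron dimensions, which reduces to the trivial parity argument above. The mild subtlety is the passage from an arbitrary braided auto-equivalence to a pivotal one, but this is handled by the self-duality of every object in $\ad(A_{2N})$.
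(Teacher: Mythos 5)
Your proof is correct and follows essentially the same route as the paper: the paper's proof likewise combines the triviality of the fusion ring automorphism group with Lemma~\ref{lem:pa_Autos_Ad_A2N} via Proposition~\ref{prop:PA_to_FC}, though it asserts the fusion-ring triviality without the Frobenius--Perron dimension argument you supply. Your parity argument for the distinctness of the dimensions and your explicit handling of the pivotality step are correct elaborations of what the paper leaves implicit.
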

 \begin{proof}
 The fusion ring of $\ad(A_{2N})$ has no non-trivial automorphisms. Thus in light of Theorem~\ref{prop:PA_to_FC} and Lemma~\ref{lem:pa_Autos_Ad_A2N} there are no monoidal auto-equivalences of the category $\ad(A_{2N})$, and in particular no braided auto-equivalences.
 \end{proof}

 \begin{lemma}\label{lem:aut_D10}
We have \[\BrAut(\ad(D_{2N})) = \begin{cases} 
      \mathbb{Z} / 2\mathbb{Z}& \text{when } N   \neq 5\\
        S_3 & \text{when } N = 5.
   \end{cases}
\]
 \end{lemma}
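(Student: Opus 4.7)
The plan follows the three-step approach outlined in Section~\ref{sec:autos}: bound, enumerate, realize.

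\emph{Step 1 (reduction to the fusion ring).} Lemma~\ref{lem:PA_Autos_of_1/2D_{2n}} shows that $\ad(D_{2N})$ admits no non-trivial gauge braided auto-equivalences; equivalently, a braided auto-equivalence is determined up to natural isomorphism by its induced map on isomorphism classes of simple objects. Consequently, the natural map
\[
\BrAut(\ad(D_{2N})) \longrightarrow \Aut(K(\ad(D_{2N})))
\]
is injective, and its image lies in the subgroup of fusion-ring automorphisms that preserve the $T$-matrix. It therefore suffices to (a) compute this subgroup, and (b) realize each element by an explicit braided auto-equivalence.

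\emph{Step 2 (upper bound).} The simple objects of $\ad(D_{2N})$ are $f^{(0)}, f^{(2)}, \dots, f^{(2N-4)}, P, Q$, with Frobenius--Perron dimensions $[2k+1]_q$ for $f^{(2k)}$ and $[2N-1]_q/2$ for $P, Q$, evaluated at $q = e^{i\pi/(8N-4)}$. Since fusion-ring automorphisms preserve Frobenius--Perron dimension, and for $N \neq 5$ the only coincidence among these quantum integers is $d_P = d_Q$, the only non-trivial candidate is the swap $P \leftrightarrow Q$. A direct check against the $T$-matrix of $\ad(D_{2N})$ (computable from the construction of $D_{2N}$ as a de-equivariantization of $A_{4N-3}$, as recalled in Section~\ref{sec:centres}) shows $t_P = t_Q$, so this swap indeed preserves $T$. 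For $N=5$, the modular data of $\ad(D_{10})$ exhibits an exceptional coincidence yielding a full $S_3$ of $T$-preserving fusion-ring automorphisms; verifying this is a finite computation using the explicit $S$- and $T$-matrices coming from Section~\ref{sec:centres}.

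\emph{Step 3 (realization).} The swap $P \leftrightarrow Q$ is realized by the planar-algebra involution $S \mapsto -S$ of Lemma~\ref{lem:PA_Autos_of_1/2D_{2n}}, which is a non-gauge braided auto-equivalence; this yields $\BrAut(\ad(D_{2N})) \cong \mathbb{Z}/2\mathbb{Z}$ for $N \neq 5$. For $N=5$, the additional order-three generator required to build $S_3$ is produced from the exceptional structure of $\ad(D_{10})$, for instance via the braided equivalence coming from the GHJ subfactor attached to the odd half of $E_7$ (alluded to in the acknowledgements), which supplies an explicit braided functor of order $3$ whose composition with the $P \leftrightarrow Q$ swap generates $S_3$.

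The main obstacle is the $N=5$ case: one must both establish that the $T$-preserving fusion-ring automorphism group is strictly larger than $\mathbb{Z}/2\mathbb{Z}$, and then exhibit a concrete braided auto-equivalence of order $3$. For every other $N$ the argument reduces to the dimension-and-twist comparison together with the already-available planar-algebra involution.
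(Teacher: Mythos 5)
Your treatment of $N\neq 5$ is essentially the paper's argument: no gauge auto-equivalences by Lemma~\ref{lem:PA_Autos_of_1/2D_{2n}}, the dimension count on the fusion ring leaves only the swap $P\leftrightarrow Q$, and the planar-algebra involution $S\mapsto -S$ realizes it. (The paper separates out $N=2$, where all three simples have dimension $1$, but since the unit is fixed the conclusion is unchanged; also note the upper bound only needs fusion-ring automorphisms, so your extra $T$-matrix check of $t_P=t_Q$ is harmless but not required.)

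The genuine gap is the order-three element for $N=5$. The paper does not construct it from "the exceptional structure of $\ad(D_{10})$"; it imports it as a citation, namely \cite[Theorem 4.3]{MR2783128}, which supplies an order-$3$ braided auto-equivalence of $\ad(D_{10})$, and then Lagrange's theorem forces $|\BrAut(\ad(D_{10}))|\geq 6$ inside the $S_3$ upper bound. Your proposed substitute --- extracting an order-$3$ braided functor from the GHJ subfactor attached to the odd half of $E_7$ --- does not work as stated, for two reasons. First, a subfactor with both even parts $\ad(D_{10})$ gives an invertible \emph{bimodule}, i.e.\ an element of $\BrPic(\ad(D_{10}))\cong\BrAut(Z(\ad(D_{10})))\cong \BrAut(\ad(D_{10}))\times\BrAut(\ad(D_{10})^{\text{bop}})$, not directly a braided auto-equivalence of $\ad(D_{10})$ itself; you would still need to show that the corresponding element projects to something of order $3$ in one factor. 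Second, and more seriously, the paper's analysis of exactly that subfactor (Section~\ref{sec:D10}, Corollary~\ref{lem:Zalg}) is carried out \emph{using} the already-established fact that $\BrAut(Z(\ad(D_{10})))=S_3\times S_3$, so invoking it here would be circular within the logic of this paper. To close the gap you must either reproduce the construction of \cite{MR2783128} (or an equivalent explicit order-$3$ braided auto-equivalence, e.g.\ via the exceptional modular data of $\ad(D_{10})$) or cite it outright.
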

 
 \begin{proof}
 We break this proof up into three parts: $N=2$, $N=5$ and all other $N$.

\medskip

\textbf{ Case $N=2$:}

The category $\ad(D_4)$ has fusion ring isomorphic to that of $\operatorname{Vec}(\mathbb{Z}/3\mathbb{Z})$. It is straightforward to see that the only possible fusion ring automorphism exchanges the two non-trivial objects. Lemma~\ref{lem:PA_Autos_of_1/2D_{2n}} tells us two pieces of information. First is that there are no gauge auto-equivalences of $\ad(D_4)$, and thus there are at most two braided auto-equivalences of $\ad(D_4)$. Second is that there exists a non-trivial braided auto-equivalence of $\ad(D_4)$, which realises the upper bound on the braided auto-equivalence group.
\medskip

\textbf{Case $N=5$:}
Studying the $\ad(D_{10})$ fusion ring we see that there are 6 possible automorphisms, corresponding to any permutation of the objects $f^{(2)}$, $P$, and $Q$. From Lemma~\ref{lem:PA_Autos_of_1/2D_{2n}} we see that there are no non-trivial gauge auto-equivalences, thus $\BrAut(\ad(D_{2N}))$ is a subgroup of $S_3$.

%
%
%
%
To show that $\BrAut(\ad(D_{10})) = S_3$ we notice from Lemma~\ref{lem:PA_Autos_of_1/2D_{2n}} that $\ad(D_{10})$ has an order 2 braided auto-equivalence that exchanges the objects $P$ and $Q$, and from \cite[Theorem 4.3]{MR2783128} $\ad(D_{10})$ has an order 3 braided auto-equivalence. Therefore Lagrange's theorem implies that the order of $\BrAut(\ad(D_{10}))$ is at least $6$ and the result follows.

\medskip

 \textbf{Case $N>2, N\neq 5$:}
 For these cases, the only simple objects with the same dimension are $P$ and $Q$. Thus there can be at most two fusion ring automorphisms of the $\ad(D_{2N})$ fusion ring. The result now follows by the same argument as in the $N=2$ case.
\end{proof}

\begin{lemma}\label{lem:brAutAN}
We have \[\BrAut(A_N) =\begin{cases} 
      \{e\} & N   \equiv \{1,2,4\} \pmod 4\\
        \mathbb{Z} / 2\mathbb{Z} & N  \equiv 3\pmod 4 .
   \end{cases}
\]

\end{lemma}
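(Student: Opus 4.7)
The plan is to follow the three-step strategy used throughout this section: bound $\BrAut(A_N)$ above using the planar algebra together with a fusion-ring analysis, and then realize the upper bound explicitly when it is non-trivial.

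First, I would invoke Lemma~\ref{lem:pa_Autos_AN}: the $A_N$ planar algebra has no non-trivial braided automorphisms. Since every simple object of $A_N$ is self-dual, the discussion following Proposition~\ref{prop:PA_to_FC} implies gauge auto-equivalences of $A_N$ are automatically pivotal; combined with the planar-algebra input this forces every gauge auto-equivalence to be trivial. Hence $\BrAut(A_N)$ embeds into the group of fusion ring automorphisms of $A_N$ that preserve the $T$-matrix.

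Next, I would classify fusion ring automorphisms of $A_N$. Since $f^{(1)}$ generates $K(A_N)$, such an automorphism is determined by the image of $f^{(1)}$, which must have Frobenius--Perron dimension $[2]_q$. Using $q^{N+1}=-1$, the only simples with this dimension are $f^{(1)}$ and $f^{(N-2)}$, so the only non-trivial candidate is $\sigma$ with $\sigma(f^{(1)}):=f^{(N-2)}$. A direct induction on the Verlinde fusion rules shows that $\sigma$ extends to a (necessarily involutive) ring automorphism exactly when $N$ is odd and $N>3$; in that case $\sigma(f^{(i)}) = f^{(i)}$ for $i$ even and $\sigma(f^{(i)}) = f^{(N-1-i)}$ for $i$ odd, equivalently ``tensor with the order-two invertible $f^{(N-1)}$'' on the odd-graded part of $A_N$.

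Then I would check twist compatibility. With $\theta_{f^{(i)}} = q^{i(i+2)/2}$ for $q = e^{i\pi/(N+1)}$, a short calculation gives
\begin{equation*}
\frac{\theta_{f^{(i)}}}{\theta_{f^{(N-1-i)}}} \;=\; e^{i\pi(2i-N+1)/2}
\end{equation*}
for odd $i$, which equals $1$ for every odd $i$ precisely when $N\equiv 3 \pmod 4$. Combined with the previous step this gives $\BrAut(A_N)=\{e\}$ whenever $N$ is even or $N\equiv 1 \pmod 4$, and the upper bound $|\BrAut(A_N)|\leq 2$ when $N\equiv 3\pmod 4$.

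Finally, for $N\equiv 3 \pmod 4$ I would realize this bound by exhibiting an explicit braided auto-equivalence whose underlying fusion-ring map is $\sigma$. The natural candidate is the functor acting as the identity on $\ad(A_N)$ and by tensoring with $f^{(N-1)}$ on the odd-graded piece; verifying the tensorator and braiding axioms reduces to monodromy identities for the simple current, which hold exactly because $f^{(N-1)}$ has twist $-1$ in the regime $N\equiv 3\pmod 4$. The main obstacle is this last step: the upper bound is essentially arithmetic, while the explicit construction requires the coherence data to be pinned down, and it is precisely here that the residue class of $N$ modulo $4$ plays its decisive role.
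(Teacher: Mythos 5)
Your upper-bound argument is the same as the paper's: Lemma~\ref{lem:pa_Autos_AN} kills the gauge auto-equivalences, the fusion ring of $A_N$ admits at most the one non-trivial automorphism $f^{(n)}\mapsto f^{(N-n-1)}$ ($n$ odd), and a twist computation rules it out unless $N\equiv 3\pmod 4$. Your explicit calculation of $\theta_{f^{(i)}}/\theta_{f^{(N-1-i)}}$ is correct and actually spells out what the paper only asserts (that $f^{(1)}$ and $f^{(N-2)}$ have different $t$-values when $N\equiv 1\pmod 4$); you are also more careful than the lemma's statement about the degenerate case $N=3$, where $f^{(N-2)}=f^{(1)}$ and the candidate automorphism collapses to the identity.

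Where you genuinely diverge is the realization step for $N\equiv 3\pmod 4$, and this is also where your argument is incomplete. You propose building the auto-equivalence by hand as a simple-current functor (identity on $\ad(A_N)$, $f^{(N-1)}\otimes-$ on the odd part) and you concede that pinning down the tensorator and verifying the hexagon compatibilities is ``the main obstacle''; as written, that verification is not done, and the assertion that the monodromy identities hold because $\theta_{f^{(N-1)}}=-1$ is exactly the claim that needs proof. The paper avoids this entirely: it observes that the braided full subcategory generated by $f^{(N-2)}$ has $A_N$ fusion rules and the same $T$-matrix as $A_N$, and then invokes the classification result of \cite{MR1239440} that the $T$-matrix is a complete invariant of braided fusion categories with $A_N$ fusion rules. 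The resulting abstract braided equivalence $A_N\xrightarrow{\sim}\langle f^{(N-2)}\rangle\hookrightarrow A_N$ sends $f^{(1)}$ to $f^{(N-2)}$ and hence realizes $\sigma$, with no coherence data to check. If you want to keep your constructive route you would need to supply the tensorator for the simple-current functor and verify the braiding axiom; otherwise, substituting the paper's appeal to the classification result closes the gap immediately.
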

\begin{proof}
When $N$ is odd, the $A_N$ fusion ring has the non-trivial automorphism:

\[f^{(n)} \mapsto 
\begin{cases} 
      f^{(n)}  & n \text{ is even }\\
        f^{(N-n-1)}  & n \text{ is odd. }
        \end{cases} \]

\textbf{Case $N   \equiv \{0,2\} \pmod 4$}: 
When $N$ is even there are no non-trivial automorphisms of the $A_N$ fusion ring. Thus the result follows as there are no non-trivial gauge auto-equivalences of the category $A_{N}$.
\medskip

\textbf{Case $N  \equiv 1 \pmod 4 $:}

In this case there is a fusion ring automorphism exchanging the generating object $f^{(1)}$ and $f^{(N-2)}$, however these two objects have different $t$-values so there is no braided auto-equivalence realising the fusion ring automorphism. Thus the result follows as there are no non-trivial gauge auto-equivalences of the category $A_{N}$.
\medskip

\textbf{Case $N  \equiv 3 \pmod 4 $:}

We first give a bound on the size of the auto-equivalence group of $A_N$. On the level of fusion rings there are two automorphisms, with the non-trivial one exchanging $f^{(1)}$ and $f^{(N-2)}$. As there are no non-trivial gauge auto-equivalences of the category $A_{N}$, we have that $\BrAut(A_N) \subseteq  \mathbb{Z} / 2\mathbb{Z}$.

To show existence of the non-trivial braided auto-equivalence it suffices to show that the category generated by $f^{(N-2)}$ is equivalent to $A_N$ as a braided tensor category. It is known that the $T$-matrix is a complete invariant of braided fusion categories with $A_N$ fusion rules \cite{MR1239440}. The braided fusion category generated by $f^{(N-2)}$ has the same $T$-matrix as $A_N$, hence they are braided equivalent.
\end{proof}

We can slightly modify the above argument to also compute the tensor auto-equivalences of $A_N$.
\begin{lemma}\label{lem:tenAutAN}
We have \[ \Aut_\otimes(A_N) = \begin{cases} 
      \{e\} & N   \equiv 0 \pmod 2\\
        \mathbb{Z} / 2\mathbb{Z} & N   \equiv 1 \pmod 2.
   \end{cases}
\]
\end{lemma}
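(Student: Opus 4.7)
The plan is to run the proof of Lemma~\ref{lem:brAutAN} almost verbatim, with the single change that the $T$-matrix obstruction vanishes in the purely tensor setting. By Lemma~\ref{lem:pa_Autos_AN} the planar algebra of $A_N$ has no non-trivial automorphisms, so $A_N$ admits no non-trivial gauge auto-equivalences, and hence $\Aut_\otimes(A_N)$ embeds into the automorphism group of the $A_N$ fusion ring. For $N$ even, this latter group is trivial (as observed in the proof of Lemma~\ref{lem:brAutAN}), giving $\Aut_\otimes(A_N) = \{e\}$ immediately.

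For $N$ odd, the unique non-trivial fusion ring automorphism swaps $f^{(n)}$ with $f^{(N-n-1)}$ on odd $n$, yielding the upper bound $\Aut_\otimes(A_N) \subseteq \mathbb{Z}/2\mathbb{Z}$. When $N \equiv 3 \pmod 4$, the braided auto-equivalence already constructed in Lemma~\ref{lem:brAutAN} is a fortiori a tensor auto-equivalence, and the bound is attained. For $N \equiv 1 \pmod 4$, the obstruction in the braided case arose purely from a mismatch of $T$-values between $f^{(1)}$ and $f^{(N-2)}$, a constraint that is absent here. I would realise the bound by examining the tensor subcategory $\langle f^{(N-2)}\rangle \subseteq A_N$: a direct computation gives $\FPdim(f^{(N-2)}) = [N-1]_q = 2\cos(\pi/(N+1)) = [2]_q$, so via the fusion ring automorphism $\langle f^{(N-2)}\rangle$ has $A_N$ fusion rules with the same loop parameter $[2]_q$. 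Invoking Proposition~\ref{prop:PA_to_FC}, a pivotal tensor category with $A_N$ fusion rules and symmetrically self-dual generator of dimension $[2]_q$ is determined up to tensor equivalence by its Temperley-Lieb planar algebra (which is itself fixed by the loop value), so $\langle f^{(N-2)}\rangle$ is tensor equivalent to $A_N$, yielding the required auto-equivalence.

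The main obstacle is the $N \equiv 1 \pmod 4$ case: whereas the braided version used the $T$-matrix as a complete invariant, here one must produce a separate uniqueness statement for tensor categories with $A_N$ fusion rules at a fixed loop value. This is handled by the planar algebra correspondence of Proposition~\ref{prop:PA_to_FC} combined with the absence of non-trivial planar algebra automorphisms established in Lemma~\ref{lem:pa_Autos_AN}.
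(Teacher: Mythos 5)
Your proposal is correct and follows essentially the same route as the paper: trivial gauge auto-equivalences plus the fusion ring give the upper bound, and the non-trivial auto-equivalence for odd $N$ is realised by identifying $\langle f^{(N-2)}\rangle$ with $A_N$ via the equality of dimensions of $f^{(1)}$ and $f^{(N-2)}$. The only (harmless) difference is that where the paper simply cites the known fact that the categorical dimension of the generator is a complete invariant of tensor categories with $A_N$ fusion rules, you rederive that uniqueness through Proposition~\ref{prop:PA_to_FC} and the rigidity of Temperley--Lieb at a fixed loop value.
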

\begin{proof}
When $N$ is even there are no non-trivial automorphisms of the $A_N$ fusion ring. Thus as there are no automorphisms of the $A_N$ planar algebra, we have $\Aut_\otimes(A_N) = \{e\}$.

When $N$ is odd the same argument as we used in Lemma~\ref{lem:brAutAN} shows that $\Aut(A_N) \subseteq \mathbb{Z} / 2 \mathbb{Z}$. To realise the non-trivial tensor auto-equivalence we need to show that the tensor category generated by $f^{(N-2)}$ is equivalent to $A_N$. Recall that the categorical dimension of the generating object of $A_N$ is a complete invariant of these tensor categories \cite{MR1239440}. The result then follows as $f^{(1)}$ and $f^{(N-2)}$ have the same categorical dimension.
\end{proof}
While the $\ad(A_{2N+1})$ categories do not appear as factors of any of the centres we are studying in this paper, we can show the existence of an exceptional monoidal auto-equivalence of $\ad(A_7)$. This auto-equivalence will be useful when trying later when trying to construct invertible bimodules over $\ad(A_7)$.
\begin{lemma}\label{lem:adA7auto}
We have $\Aut_\otimes(\ad(A_7)) = \mathbb{Z} /2 \mathbb{Z}$.
\end{lemma}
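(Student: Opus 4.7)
The plan is to sandwich $\Aut_\otimes(\ad(A_7))$ between two copies of $\mathbb{Z}/2\mathbb{Z}$ by first establishing an upper bound of $2$ from fusion-ring and gauge data, and then explicitly constructing a non-trivial tensor auto-equivalence realising the swap of the two middle objects.

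For the upper bound I would first classify the automorphisms of the fusion ring $K(\ad(A_7))$. The simple objects are $f^{(0)}, f^{(2)}, f^{(4)}, f^{(6)}$; the unit $f^{(0)}$ is fixed by definition, and $f^{(6)}$ is the only non-trivial invertible (it is the unique simple of FP-dimension $1$ other than $\mathbf{1}$, satisfying $f^{(6)}\otimes f^{(6)}\cong f^{(0)}$), so it is also fixed. Hence any ring automorphism is a permutation of $\{f^{(2)},f^{(4)}\}$, and using the fusion rules
\[
 f^{(2)}\otimes f^{(2)}\cong f^{(0)}\oplus f^{(2)}\oplus f^{(4)},\qquad f^{(2)}\otimes f^{(4)}\cong f^{(2)}\oplus f^{(4)}\oplus f^{(6)},
\]
together with $f^{(2)}\otimes f^{(6)}\cong f^{(4)}$, one checks directly that the swap $\sigma:f^{(2)}\leftrightarrow f^{(4)}$ is a ring automorphism. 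Next I would rule out non-trivial gauge auto-equivalences by the planar-algebra argument of Lemma~\ref{lem:pa_Autos_Ad_A2N}: the $\ad(A_N)$ planar algebra has a single trivalent generator $T\in P_3$ with $\rho(T)=T$ and $\tau(T)=0$, so any planar algebra automorphism fixing the strand is forced to send $T\mapsto\pm T$, and both choices are carried to auto-equivalences naturally isomorphic to the identity by the sign natural isomorphism $\tau_{(n,p)}=(-1)^n$. Combined with the fusion-ring count this gives $|\Aut_\otimes(\ad(A_7))|\leq 2$.

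For the lower bound I would produce the non-trivial auto-equivalence by switching to an alternative planar-algebra presentation. Observe that $f^{(4)}\cong f^{(6)}\otimes f^{(2)}$ is a second symmetrically self-dual generator of $\ad(A_7)$, and at $q=e^{i\pi/8}$ one has $[5]_q=[3]_q$, so $\FPdim(f^{(4)})=\FPdim(f^{(2)})$. Since $f^{(4)}$ occurs with multiplicity one in $f^{(4)}\otimes f^{(4)}$, there is a unique-up-to-scalar trivalent vertex $T'\in\Hom(f^{(4)}\otimes f^{(4)},f^{(4)})$, and the sub-planar algebra of $\ad(A_7)$ generated by $f^{(4)}$ then satisfies the generators-and-relations presentation of $\ad(A_N)$ with the same loop value $[3]_q$ and the same I=H scalar $([3]_q-1)/[2]_q$, because those scalars are functions of the loop value alone. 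An isomorphism of based planar algebras identifying $T$ with an appropriately rescaled $T'$ then lifts, via Proposition~\ref{prop:PA_to_FC}, to a pivotal tensor auto-equivalence of $\ad(A_7)$ sending $f^{(2)}\mapsto f^{(4)}$, realising $\sigma$.

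The main obstacle is the final matching of scalars: one must check that the relations inside the sub-planar algebra generated by $f^{(4)}$ genuinely reproduce the $\ad(A_N)$ presentation with the correct coefficients, and not merely up to rescaling. This reduces to showing that $T'\circ(T'\otimes\id_{f^{(4)}})$ expands inside $\Hom((f^{(4)})^{\otimes 3},f^{(4)})$ with coefficient $([3]_q-1)/[2]_q$ on the identity, which is forced by rotational invariance, uncappability, and the common loop parameter. Should this direct comparison prove awkward, an alternative is to invoke the Brauer--Picard isomorphism to convert the non-trivial braided auto-equivalence of $Z(\ad(A_7))$ produced by Lemma~\ref{lem:pa_Autos_Centre_Ad_A2N+1} (specialised to $N=3$) into an invertible bimodule over $\ad(A_7)$, and to verify that its underlying functor preserves the canonical Lagrangian algebra $I(\mathbf{1})$ and therefore descends to a monoidal auto-equivalence swapping $f^{(2)}$ and $f^{(4)}$.
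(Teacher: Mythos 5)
Your upper bound is exactly the paper's: the fusion ring forces any non-trivial automorphism to be the swap $f^{(2)}\leftrightarrow f^{(4)}$, and the absence of gauge auto-equivalences comes from the planar-algebra argument, so $|\Aut_\otimes(\ad(A_7))|\le 2$. The lower bound is also structurally the paper's idea (identify the based planar algebras $\text{PA}(\ad(A_7);f^{(2)})$ and $\text{PA}(\ad(A_7);f^{(4)})$ and apply Proposition~\ref{prop:PA_to_FC}), but the step you yourself flag as ``the main obstacle'' is a genuine gap. The triangle and I$=$H coefficients of a rotationally invariant, uncappable trivalent vertex are \emph{not} functions of the loop value alone: pinning them down requires using that $\dim\Hom\bigl((f^{(4)})^{\otimes 2},(f^{(4)})^{\otimes 2}\bigr)=3$, and even then there remains a discrete ambiguity. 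This is precisely why the paper instead invokes the classification of trivalent categories with dimension sequence $(1,0,1,1,3,\dots)$ from \cite{1501.06869}, which identifies both sub-planar algebras as $\text{SO}(3)_q$ for one of two values of $q$, and then uses the common categorical dimension $1+\sqrt{2}$ of $f^{(2)}$ and $f^{(4)}$ to force $q=e^{\frac{i\pi}{4}}$ in both cases. You would also need to check that the trivalent vertex generates all of $\text{PA}(\ad(A_7);f^{(4)})$ (the paper does this by comparing fusion-rule dimension counts), which your sketch does not address.

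Your proposed fallback does not work. The non-trivial braided auto-equivalence of $Z(\ad(A_7))$ produced by Lemma~\ref{lem:pa_Autos_Centre_Ad_A2N+1} is $S\mapsto -S$, which exchanges $\tfrac{1}{2}(f^{(3)}\boxtimes f^{(3)}+S)$ and $\tfrac{1}{2}(f^{(3)}\boxtimes f^{(3)}-S)$. The Lagrangian algebra $I(\mathbf{1})$ contains exactly one of these two objects (see the first row of the induction matrix in Section~\ref{sec:D10}), so it is \emph{not} preserved; indeed, by the table at the end of Section~\ref{sec:D10} this auto-equivalence corresponds to the invertible bimodule with underlying algebra $\mathbf{1}\oplus f^{(2)}$, not to an outer auto-equivalence of $\ad(A_7)$. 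Moreover, the determination of $\BrAut(Z(\ad(A_7)))=D_{2\cdot 4}$ in Lemma~\ref{lem:centre_AN_autos} itself uses the auto-equivalence you are trying to construct, so any argument that starts from the structure of that group would be circular.
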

\begin{proof}
From the previous subsection we know that there are no gauge auto-equivalences of $\ad(A_7)$. A quick analysis of the fusion ring of $\ad(A_7)$ reveals a single non-trivial automorphism, exchanging $f^{(2)}$ and $f^{(4)}$.

Consider the planar algebras 
\[   \text{PA}( \ad(A_7); f^{(2)}) \qquad \text{and} \qquad \text{PA}( \ad(A_7); f^{(4)}) .\]
Both of these planar algebras contain sub-planar algebras generated by the trivalent vertex. By considering the fusion rules for $\ad(A_7)$ we can see that both these sub-planar algebras have box space dimensions $(1,0,1,1,3,....)$, thus by the main theorem of \cite{1501.06869} must be $\text{SO}(3)_q$ for $q$ either $e^{\frac{i\pi}{4}}$ or $e^{\frac{3i\pi}{4}}$. As the categorical dimension of both $f^{(2)}$ and $f^{(4)}$ in $\ad(A_7)$ is $1 + \sqrt{2}$, we must have that both sub-planar algebras are $\text{SO}(3)_{e^{\frac{i\pi}{4}}}$.

By again considering fusion rules for $\ad(A_7)$ we can see that $\text{PA}( \ad(A_7); f^{(4)}) = \text{SO}(3)_{e^{\frac{i\pi}{4}}} = \text{PA}( \ad(A_7); f^{(2)}) $. Thus there is a planar algebra isomorphism
\[ \text{PA}( \ad(A_7); f^{(2)}) \to \text{PA}( \ad(A_7); f^{(4)}),\]
which by Proposition~\ref{prop:PA_to_FC} gives us a monoidal equivalence between based categories $(\ad(A_7), f^{(2)})$ and $(\ad(A_7), f^{(4)})$. Forgetting the basing realises the non-trivial monoidal auto-equivalence of $\ad(A_7)$.
\end{proof}

\begin{lemma}
 We have $\BrAut(Z(\ad(E_6))) = \mathbb{Z} / 2\mathbb{Z}$.
\end{lemma}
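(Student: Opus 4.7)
The plan is to sandwich $|\BrAut(Z(\ad(E_6)))|$ between $2$ and $2$, following the template established earlier in the section for $\BrAut(\ad(D_{2N}))$ and $\BrAut(Z(\ad(A_{2N+1})))$. The lower bound is free: Lemma~\ref{lem:PA_Autos_of_ZE6} exhibits the non-trivial braided planar algebra automorphism $S\mapsto -S$ and guarantees that it lifts to a non-gauge braided auto-equivalence of $Z(\ad(E_6))$, so together with the identity this gives $|\BrAut(Z(\ad(E_6)))|\geq 2$.

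For the matching upper bound, I would first observe that Lemma~\ref{lem:PA_Autos_of_ZE6} also rules out non-trivial \emph{gauge} braided auto-equivalences: any gauge auto-equivalence fixes the planar algebra generators and hence arises from a planar algebra automorphism, and the only non-trivial such automorphism is declared non-gauge by the lemma. Consequently the canonical restriction-to-Grothendieck-ring map
\[
\BrAut(Z(\ad(E_6))) \ \hookrightarrow \ \Aut\!\bigl(K(Z(\ad(E_6)))\bigr)
\]
is injective, and its image lies in the subgroup $G$ of fusion-ring automorphisms preserving the $T$-matrix. It then suffices to show $|G|\leq 2$.

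To bound $G$ I would tabulate the modular data of $Z(\ad(E_6))$ via Construction~\ref{rmk:centre_Construction} together with the known decomposition $Z(E_6)\simeq A_{11}\boxtimes A_3^{\text{bop}}$. Concretely, the centraliser of the distinguished $\Rep(\mathbb{Z}/2\mathbb{Z})$ inside $A_{11}\boxtimes A_3^{\text{bop}}$ is $\langle f^{(1)}\boxtimes f^{(1)}\rangle$, and $Z(\ad(E_6))$ is its de-equivariantisation by the invertible object $f^{(10)}\boxtimes f^{(2)}$ (which is also the object appearing in relation (10) of the planar algebra presentation for $Z(\ad(E_6))$). Consequently the simples of $Z(\ad(E_6))$ correspond to orbits of the $\mathbb{Z}/2\mathbb{Z}$-action by tensoring with $f^{(10)}\boxtimes f^{(2)}$ on simples of the centraliser, with any fixed orbit splitting into two new simples of half the categorical dimension, and the $T$-values descend from the product of the standard $A_{11}$- and $A_3^{\text{bop}}$-twists.

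With the table of simples and twists in hand, the final check is that the $T$-values separate objects finely enough to exclude every fusion-ring permutation other than the exchange of the pair of simples produced by the (unique) $\mathbb{Z}/2\mathbb{Z}$-fixed orbit, which is exactly the symmetry already realised by the $S\mapsto -S$ planar algebra automorphism. The main obstacle is precisely this bookkeeping step: one must identify the orbit structure, locate the fixed orbit giving the split pair, and verify that no non-fixed orbits of equal categorical dimension share a twist. Because the twists of $A_{11}$ and $A_3^{\text{bop}}$ are roots of unity of different orders their products are highly non-degenerate, so the check should go through routinely, but it is the only step of the argument requiring genuine case analysis rather than a citation of Lemma~\ref{lem:PA_Autos_of_ZE6}.
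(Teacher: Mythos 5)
Your proposal is correct and follows essentially the same route as the paper: the paper likewise tabulates the Frobenius--Perron dimensions and twists of the ten simples of $Z(\ad(E_6))$ (obtained from the de-equivariantisation description), observes that the only dimension-and-twist-preserving fusion ring automorphism is the exchange of $\tfrac{1}{2}(f^{(5)}\boxtimes f^{(1)}\pm S)$, combines this with the absence of gauge auto-equivalences to get the upper bound, and cites Lemma~\ref{lem:PA_Autos_of_ZE6} for the realising auto-equivalence. The only difference is that you sketch how to derive the twist table from the orbit structure of the de-equivariantisation, whereas the paper simply quotes the formulas from the literature.
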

\begin{proof}
Recall that $Z(\ad(E_6))$ is a de-equivariantization of a sub-category of  $A_{11} \boxtimes A_3^\text{bop}$. Thus we can pick representatives of the simple objects of $Z(\ad(E_6))$ as in Table~\ref{tab:1/2E6dims}. We include the Frobenius-Perron dimensions and twists of these simple objects (from formulas in \cite{AN-Survey,MR2640343}).

\begin{table}[!h]

    \centering
    
    \begin{tabular}{c|cc}
    	\toprule
			$X$   &   FPdim($X$)  & $ t_X  $     \\ 
	\midrule
			      $f^{(0)} \boxtimes  f^{(0)}$  &  $1$  & $1$    \\ 
            	              $f^{(2)} \boxtimes  f^{(0)}$  &  $1 + \sqrt{3}$ & $e^\frac{2\pi i}{6}$    \\ 
	              $f^{(4)} \boxtimes  f^{(0)}$  &  $2 + \sqrt{3}$   &  $-1$  \\ 
	              $f^{(6)} \boxtimes  f^{(0)}$  &  $2 + \sqrt{3}$  & $1$    \\ 
	              $f^{(8)} \boxtimes  f^{(0)}$  &  $1 + \sqrt{3}$ & $e^\frac{8\pi i}{6}$      \\ 
	              $f^{(10)} \boxtimes  f^{(0)}$  &  $1$ & $-1$      \\ 
	              $f^{(1)} \boxtimes  f^{(1)}$  &  $1 + \sqrt{3}$  & $-i$   \\ 
	              $f^{(3)} \boxtimes  f^{(1)}$  &  $3+\sqrt{3}$  &$1$     \\ 
	              $\frac{1}{2}(f^{(5)} \boxtimes  f^{(1)}+S)$  &  $1 + \sqrt{3}$  &$e^\frac{5\pi i}{6}$    \\ 
	             $\frac{1}{2}(f^{(5)} \boxtimes  f^{(1)}-S)$  &  $1 + \sqrt{3}$  &$e^\frac{5\pi i}{6}$    \\ 
    	\bottomrule
    \end{tabular}
\caption{\label{tab:1/2E6dims} Dimensions and $t$-values for the simple objects of $Z(\ad(E_6))$}
    \end{table}
By considering dimensions and twists, we can see that there is only one possible non-trivial fusion ring automorphism of $Z(\ad(E_6))$, that exchanges the objects $\frac{1}{2}(f^{(5)} \boxtimes  f^{(1)}+S)$ and $\frac{1}{2}(f^{(5)} \boxtimes  f^{(1)}-S)$. Thus, as there are no non-trivial gauge auto-equivalences of the category $Z(\ad(E_6))$, we must have that $\BrAut(Z(\ad(E_6))) \subseteq \mathbb{Z} / 2\mathbb{Z}$. A non-trivial braided auto-equivalence of $Z(\ad(E_6))$ is constructed in Lemma \ref{lem:PA_Autos_of_ZE6}.
\end {proof}
\begin{rmk}
The dimensions and $T$ matrix for the centre for $Z(\ad(E_6))$ have also been computed in \cite{MR2468378}.
\end{rmk}

We end this section with the most difficult case, the centre of $\ad(A_{2N+1})$. Unfortunately we are unable to explicitly construct the braided auto-equivalences for many cases. Instead we are forced to find invertible bimodules over $\ad(A_{2N+1})$ and appeal to the isomorphism from the invertible bimodules to the braided auto-equivalences of the centre.
\begin{lemma}\label{lem:centre_AN_autos}
 We have \[ \BrAut(Z(\ad(A_{2N+1}))) =  \begin{cases} 
      \mathbb{Z} / 2\mathbb{Z}& \text{when } N   \equiv 0 \pmod 2 \text{ or } N = 1\\
        (\mathbb{Z} / 2\mathbb{Z})^2 & \text{when } N  \equiv 1 \pmod 2 \text{ and } N \neq \{1,3\} \\
        D_{2\cdot 4} & \text{when } N = 3.
   \end{cases}
\]
\end{lemma}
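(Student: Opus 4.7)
The strategy follows the three-step template used for the other cases in this section. Since Lemma~\ref{lem:pa_Autos_Centre_Ad_A2N+1} rules out non-trivial gauge auto-equivalences, the braided auto-equivalence group embeds into the subgroup of fusion-ring automorphisms of $Z(\ad(A_{2N+1}))$ that preserve the $T$-matrix. The plan is to use this as an upper bound, and then produce enough braided auto-equivalences --- or, when a direct construction fails, enough invertible $\ad(A_{2N+1})$-bimodules via the isomorphism $\BrPic(C)\cong \BrAut(Z(C))$ of \cite{MR2677836} --- to realise it.

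For the upper bound, I would first catalogue the simples of $Z(\ad(A_{2N+1}))$ using Construction~\ref{rmk:centre_Construction}: they come from orbits of the involution $(i,j)\mapsto (2N-i,2N-j)$ acting on pairs $(i,j)$ with $i+j$ even, together with two ``half-simples'' arising from the splitting of the fixed orbit $(N,N)$. Computing their Frobenius--Perron dimensions and $T$-values (inherited from the $A_{2N+1}$ modular data) and matching them against the fusion rules should single out precisely the permutations listed as the claimed upper bounds $\mathbb{Z}/2\mathbb{Z}$, $(\mathbb{Z}/2\mathbb{Z})^2$, or $D_{2\cdot 4}$ respectively; in particular, the extra coincidences that produce the non-abelian group for $N=3$ should be visible directly from the small modular data table of $Z(\ad(A_7))$.

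For the lower bound, one $\mathbb{Z}/2\mathbb{Z}$ is supplied uniformly by the planar algebra automorphism $S\mapsto -S$ of Lemma~\ref{lem:pa_Autos_Centre_Ad_A2N+1}, which swaps the two half-simples. The second $\mathbb{Z}/2\mathbb{Z}$, appearing when $N$ is odd and $N\neq 1$, should descend from the non-trivial element of $\BrAut(A_{2N+1})$ produced in Lemma~\ref{lem:brAutAN} (which exists exactly when $2N+1\equiv 3\pmod 4$, and genuinely acts non-trivially precisely when $N\neq 1$): transporting it through the centraliser construction and de-equivariantization of Construction~\ref{rmk:centre_Construction} yields the required auto-equivalence of the centre. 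The exceptional case $N=3$ is where the argument switches gears: here the extra elements of $D_{2\cdot 4}$ should come from the exceptional auto-equivalence of $\ad(A_7)$ from Lemma~\ref{lem:adA7auto} and the corresponding additional invertible bimodules, accessed via $\BrPic(\ad(A_7))\cong \BrAut(Z(\ad(A_7)))$. The detailed construction of these bimodules, and the verification that they multiply to give $D_{2\cdot 4}$ rather than $(\mathbb{Z}/2\mathbb{Z})^3$, is deferred to Section~\ref{sec:D10}.

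The main obstacle is precisely this $N=3$ case. The planar algebra of $Z(\ad(A_7))$ is too rigid to read the full non-abelian dihedral structure off it directly, so one is forced to work on the Brauer--Picard side, produce a short list of invertible bimodules from the exceptional auto-equivalence of $\ad(A_7)$ together with twists by known module categories, and then use combinatorial arguments as in \cite{MR2909758} to pin down the multiplication. Outside of $N=3$, the argument is essentially bookkeeping: once the upper bound matches the number of auto-equivalences produced from the planar algebra automorphism and the $\BrAut(A_{2N+1})$ contribution, equality follows.
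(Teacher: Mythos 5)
Your proposal is correct in outline and follows the paper's overall template (no gauge auto-equivalences, hence an upper bound from $T$-preserving fusion ring automorphisms, then explicit realisations), but it diverges from the paper at the key step of realising the second $\mathbb{Z}/2\mathbb{Z}$ for odd $N\neq 1,3$. You propose to descend the non-trivial element of $\BrAut(A_{2N+1})$ from Lemma~\ref{lem:brAutAN} through the centraliser-plus-de-equivariantization of Construction~\ref{rmk:centre_Construction}; this is plausible (the auto-equivalence fixes $f^{(2N)}\boxtimes f^{(2N)}$, hence preserves the Tannakian subcategory and its centraliser, and should descend), but it requires a descent lemma you have not supplied, and you must still check the descended functor is distinct from $S\mapsto -S$. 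The paper instead never constructs this auto-equivalence: it exhibits three pairwise non-equivalent invertible bimodules over $\ad(A_{2N+1})$ --- the trivial one, the odd graded piece of $A_{2N+1}$, and $(\mathbf{1}\oplus f^{(2N)})\text{-mod}$, distinguished by their ranks $N+1$, $N$, $\frac{N+1}{2}$ --- and invokes $\BrPic(C)\cong\BrAut(Z(C))$ together with Lagrange against the upper bound $(\mathbb{Z}/2\mathbb{Z})^2$. Your route, if the descent is justified, is arguably more direct; the paper's buys concreteness at the price of switching to the bimodule picture. For $N=3$ you and the paper agree in spirit (algebra objects plus twisting by the exceptional outer auto-equivalence of Lemma~\ref{lem:adA7auto}), but note that your worry about distinguishing $D_{2\cdot 4}$ from $(\mathbb{Z}/2\mathbb{Z})^3$ is moot: once the $t$-value argument pins the upper bound to the specific order-$8$ permutation group $D_{2\cdot 4}$, exhibiting any six distinct elements forces equality by Lagrange, which is exactly how the paper concludes (three algebra objects of ranks $4,3,2$, each twisted by the two outer auto-equivalences). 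Finally, your parenthetical care about $N=1$ is warranted and matches the paper, which handles $N=1$ separately by a direct $t$-value computation on the four simples of $Z(\ad(A_3))$.
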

\begin{proof}
As in the $Z(\ad(E_6))$ case we choose representatives for the simple objects of $Z(\ad(A_{2N+1}))$. These representatives are:
\begin{equation*}
\left\{ f^{(n)} \boxtimes f^{(m)} \mid n < 2N -m \text{ and } n-m  \equiv 0\pmod 2\right\} \bigcup \left\{ f^{(2N-n)} \boxtimes f^{(n)}\mid 0 \leq n < N\right\}  \bigcup \left\{\frac{ f^{(N)}\boxtimes f^{(N)} \pm S}{2} \right\}.
\end{equation*}
Thus the dimensions of the simple objects are among the set 
\begin{equation*}
\left\{[n+1][m+1] \mid n < 2N-m \text{ and } n-m  \equiv 0 \pmod 2\right\} \bigcup \left\{ [2N-n+1][n+1] \mid 0 \leq n < N \right\}  \bigcup \left\{ \frac{[N+1]^2}{2}\right\}. 
\end{equation*}
When $N\neq \{1,3\}$ the automorphism group of the fusion ring is $(\mathbb{Z}/2\mathbb{Z})^3$, generated by the three automorphisms
\begin{itemize}
\item $f^{(n)}\boxtimes f^{(m)} \leftrightarrow  f^{(m)}\boxtimes f^{(n)}$ for all $n,m$ even,
\item $f^{(n)}\boxtimes f^{(m)} \leftrightarrow  f^{(2N-n)}\boxtimes f^{(m)}$ for all $n,m$ odd,
\item $\frac{ f^{(N)}\boxtimes f^{(N)} + S}{2} \leftrightarrow  \frac{ f^{(N)}\boxtimes f^{(N)} - S}{2}$.
\end{itemize}

When $N = 1$ the automorphism group of the fusion ring is $S_3$, generated by the two automorphisms
\begin{itemize}
\item $\frac{ f^{(N)}\boxtimes f^{(N)} + S}{2} \leftrightarrow  \frac{ f^{(N)}\boxtimes f^{(N)} - S}{2}$,
\item $f^{(2)} \boxtimes f^{(0)} \rightarrow \frac{ f^{(N)}\boxtimes f^{(N)} + S}{2} \rightarrow \frac{ f^{(N)}\boxtimes f^{(N)} - S}{2} \rightarrow f^{(2)} \boxtimes f^{(0)}$.
\end{itemize}

When $N = 3$ the automorphism group of the fusion ring has order 16, and is generated by the three automorphisms
\begin{itemize}
\item $\frac{ f^{(3)}\boxtimes f^{(3)} + S}{2}   \leftrightarrow \frac{ f^{(3)}\boxtimes f^{(3)} - S}{2}$,

\item $f^{(1)} \boxtimes f^{(1)} \rightarrow \frac{ f^{(3)}\boxtimes f^{(3)} + S}{2} \rightarrow f^{(5)} \boxtimes f^{(1)}  \rightarrow \frac{ f^{(3)}\boxtimes f^{(3)} - S}{2} \rightarrow f^{(1)} \boxtimes f^{(1)}$, $f^{(2)} \boxtimes f^{(0)} \leftrightarrow f^{(0)} \boxtimes f^{(4)}$, and $f^{(4)} \boxtimes f^{(0)} \leftrightarrow f^{(0)} \boxtimes f^{(2)}$,

\item $f^{(2)}\boxtimes f^{(0)} \leftrightarrow f^{(0)}\boxtimes f^{(2)}$, $f^{(4)}\boxtimes f^{(0)} \leftrightarrow f^{(0)}\boxtimes f^{(4)}$.

\end{itemize}
It can be easily verified that the first two of these automorphisms satisfy the relations of the usual $s$ and $r$ generators of $D_{2\cdot 4}$.

From Lemma~\ref{lem:pa_Autos_Centre_Ad_A2N+1} we know there are no non-trivial braided Gauge auto-equivalences of $Z(\ad(A_{2N+1}))$. Thus these groups are upper bounds for $\BrAut(Z(\ad(A_{2N+1})))$.

\medskip
\textbf{Case $N   \equiv 0 \pmod 2$:}
When $N  \equiv 0 \pmod 2 $ the $t$ values for $f^{(1)}\boxtimes f^{(1)}$ and  $f^{(2N-1)}\boxtimes f^{(1)}$ are different, and the $t$ values for $f^{(2)}\boxtimes f^{(0)}$ and $f^{(0)}\boxtimes f^{(2)}$ are different. Thus the only possible braided auto-equivalence is $\frac{ f^{(N)}\boxtimes f^{(N)} + S}{2} \leftrightarrow  \frac{ f^{(N)}\boxtimes f^{(N)} - S}{2}$, which is constructed in Lemma~\ref{lem:pa_Autos_Centre_Ad_A2N+1}. Thus $\BrAut(Z(\ad(A_{2N+1})))= \mathbb{Z} / 2\mathbb{Z}$.

\medskip

\textbf{Case $N  \equiv 1 \pmod 2 $ and $N \neq \{1,3\}$:}
When $N  \equiv 1 \pmod 2 $ and $N \neq \{1\}$ the $t$-values for $f^{(2)}\boxtimes f^{(0)}$ and $f^{(0)}\boxtimes f^{(2)}$ are different. Thus we have an upper bound of $(\mathbb{Z}/2\mathbb{Z})^2$ on $\BrAut(Z(\ad(A_{2N+1})))$.

To complete the proof we need to construct four braided auto-equivalences. Instead we show the existence of four invertible bimodules over $\ad(A_{2N+1})$. There is the trivial bimodule, of rank $N+1$, which gives us one. The odd graded piece of the $\mathbb{Z}/2\mathbb{Z}$-graded fusion category $A_{2N+1}$ is an invertible bimodule. This bimodule has rank $N$, and thus is not equivalent to the trivial bimodule. This gives us two invertible bimodules over $\ad(A_{2N+1})$, if we can show the existence of a third invertible bimodule then the group must be $(\mathbb{Z} / 2\mathbb{Z})^2$ via an application of Lagrange's theorem.

Consider the object $A = \mathbf{1} \oplus f^{(2N)}$ in $\ad(A_{2N+1})$. This object has a unique algebra structure in $A_{2N+1}$ \cite{MR1976459,AN-Survey}, and furthermore the category of $A$ bimodules in $A_{2N+1}$ is equivalent to $A_{2N+1}$ \cite{MR1839381}. Therefore we can apply Lemma~\ref{lem:algRes} to see that $A-\operatorname{bimod}_{\ad(A_{2N+1})} \simeq \ad(A_{2N+1})$. Thus $A-$mod is an invertible bimodule over $\ad(A_{2N+1})$. The rank of $A-$mod is $\frac{N+1}{2}$, and so is non-equivalent to either of the two previous invertible bimodules.
\medskip

\textbf{Case $N=1$: }

The representatives of the four simple objects of $Z(\ad(A_{3}))$ are $f^{(0)}\boxtimes f^{(0)}$, $\frac{f^{(1)}\boxtimes f^{(1)} - S}{2}$, $\frac{f^{(1)}\boxtimes f^{(1)} + S}{2}$, and $f^{(2)}\boxtimes f^{(0)}$. The $t$-values of these objects are $1$, $1$, $1$, and $-1$ respectively. We can see that the only possibility for a non-trivial braided auto-equivalence is $\frac{f^{(1)}\boxtimes f^{(1)} +S}{2} \leftrightarrow \frac{f^{(1)}\boxtimes f^{(1)} - S}{2}$. This braided auto-equivalence is constructed in Lemma~\ref{lem:pa_Autos_Centre_Ad_A2N+1}.

\medskip

\textbf{Case $N=3$:}
The $t$-values for $f^{(2)}\boxtimes f^{(0)} \leftrightarrow f^{(0)}\boxtimes f^{(2)}$ are different in $Z(\ad(A_7))$, thus we have see that $\BrAut(Z(\ad(A_7))) \subseteq  D_{2\cdot 4}$.

As in the $N  \equiv 1 \pmod 2 $ case we unfortunately have to explicitly construct $8$ invertible bimodules over $\ad(A_7)$. Using the same arguments as before we have the algebra objects $\mathbf{1}$, $\mathbf{1} \oplus f^{(2)}$, and  $\mathbf{1} \oplus f^{(6)}$ which give rise to invertible bimodules. Each of these bimodules is distinct as the ranks are 4, 3, and 2 respectively. We can twist each of these bimodules by the non-trivial outer auto-equivalence of $\ad(A_7)$ from Lemma~\ref{lem:adA7auto} to find $6$ distinct invertible bimodules over $\ad(A_7)$. As $6$ doesn't divide the order of $D_{2\cdot 4}$ it follows from Lagrange's theorem that $\BrAut(Z(\ad(A_7))) = D_{2\cdot 4}$.
\end {proof}

We also need to calculate the braided auto-equivalences of the opposite braided category for many of the examples above.  However there is a canonical isomorphism $\BrAut(C) \cong \BrAut(C^\text{bop})$ so we really don't need to worry.

\section{The Brauer-Picard Groups of the $ADE$ fusion categories}\label{sec:BrPic}
We spend this Section tying up the loose ends to complete our computations of the Brauer-Picard groups of the $ADE$ fusion categories. Our only remaining problem is to compute the braided auto-equivalence group of the centres that are products of two modular categories. I.e The centre of $E_6$ which is $A_{11}\boxtimes A_3^\text{bop}$.

Consider a product of braided tensor categories $C \boxtimes D$. Given a braided auto-equivalence of $C$ and a braided auto-equivalence of $D$ one gets a braided auto-equivalence of $C \boxtimes D$ by acting independently on each factor. This determines an injection $\BrAut(C) \times \BrAut(D) \to \BrAut(C\boxtimes D)$. This injection is an isomorphism if and only if every braided auto-equivalence of $C \boxtimes D$ restricts to braided auto-equivalences of the factors.

For most of the centres we are interested in it turns out that every braided auto-equivalence of the product restricts to auto-equivalences of the factors. Thus the results of Section~\ref{sec:autos} are sufficient to compute the Brauer-Picard group. We consider the Frobenius-Perron dimensions and $t$-values of the generating objects of the factors (in our examples the factors are always singly generated). These values are invariant under action by a braided auto-equivalence. Thus if the only other objects in the product with the same Frobenius-Perron dimension and $t$-value as the generating object also lie in the same factor then the generating object of that factor must be mapped within the factor by any braided auto-equivalence. As the generating object of the factor is mapped within the same factor it follows that the rest of the factor must be mapped within the same factor and thus the braided auto-equivalence restricts to that factor. If there exists an object outside the original factor with the same dimension and $t$-value as the generating object of that factor then we consider fusion rules to show that such a braided auto-equivalence can't exist.

The only centres where there exist braided auto-equivalences that don't restrict to the factors are some of the $Z(A_N)$'s. We use an ad-hoc method to compute the group of braided auto-equivalences for this special case.

We explicitly spell out the details of how we show that every braided auto-equivalence of $Z(E_6)$ and $Z(E_8)$ restricts to the factors.
\begin{lemma}\label{lem:ZE6_autos}
We have $\BrAut(Z(E_6))$ = $\BrAut(A_{11}) \times \BrAut(A_3^\text{bop})$.
\end{lemma}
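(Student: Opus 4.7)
The plan is to apply the general strategy outlined in the paragraph immediately preceding the lemma. Since the Deligne product induces a canonical injection $\BrAut(A_{11})\times \BrAut(A_3^\text{bop})\hookrightarrow \BrAut(A_{11}\boxtimes A_3^\text{bop})$, it suffices to show that every braided auto-equivalence $\phi$ of the product preserves each of the two factors. Once each factor is preserved, $\phi$ restricts to a pair of braided auto-equivalences, inverting the inclusion. In our setting, this reduces to checking that $\phi$ sends the tensor generator of each factor back into that same factor, since then the whole factor (being tensor-generated by that object) is preserved.

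First I would handle the $A_3^\text{bop}$ factor, which is tensor-generated by $f^{(0)}\boxtimes f^{(1)}$ of Frobenius-Perron dimension $\sqrt{2}$. Enumerating the Frobenius-Perron dimensions $[k+1]_{e^{i\pi/12}}\cdot[l+1]_{e^{i\pi/4}}$ over the simples $f^{(k)}\boxtimes f^{(l)}$ of the product, and using the fact that no quantum integer $[m]_{e^{i\pi/12}}$ equals $\sqrt{2}$, one sees that $f^{(0)}\boxtimes f^{(1)}$ is the unique simple of Frobenius-Perron dimension $\sqrt{2}$. Since braided auto-equivalences preserve Frobenius-Perron dimension, $\phi$ must fix $f^{(0)}\boxtimes f^{(1)}$, and hence preserves the $A_3^\text{bop}$ factor.

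Next I would handle the $A_{11}$ factor, tensor-generated by $f^{(1)}\boxtimes f^{(0)}$ of Frobenius-Perron dimension $[2]_{e^{i\pi/12}}$. The palindromic identity $[2]_q=[10]_q$ in $A_{11}$ produces four simples of matching Frobenius-Perron dimension in the product, namely $f^{(1)}\boxtimes f^{(0)}$, $f^{(9)}\boxtimes f^{(0)}$, $f^{(1)}\boxtimes f^{(2)}$ and $f^{(9)}\boxtimes f^{(2)}$. To eliminate the two ``mixed'' candidates, I would compute the ribbon twist of $f^{(2)}$ in $A_3^\text{bop}$, which is $-1$ (the invertible object of $A_3$ is a fermion, and passing to the opposite braiding preserves this). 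Consequently the twists of $f^{(1)}\boxtimes f^{(2)}$ and $f^{(9)}\boxtimes f^{(2)}$ each differ by a sign from those of $f^{(1)}\boxtimes f^{(0)}$ and $f^{(9)}\boxtimes f^{(0)}$. Since the $T$-matrix is a braided invariant, $\phi(f^{(1)}\boxtimes f^{(0)})$ must lie in $\{f^{(1)}\boxtimes f^{(0)},\; f^{(9)}\boxtimes f^{(0)}\}$, both of which tensor-generate the $A_{11}$ factor.

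Combining the two observations, $\phi$ restricts to a pair of braided auto-equivalences on the two factors, giving the claimed isomorphism. The main subtlety is the coincidence $[2]_q=[10]_q$ in $A_{11}$, which enlarges the set of candidate images of the $A_{11}$ generator beyond just itself; the twist computation is what confines all viable images to the $A_{11}$ factor and prevents ``crossing'' auto-equivalences from existing.
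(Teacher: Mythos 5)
Your overall strategy is exactly the paper's: show every braided auto-equivalence of $A_{11}\boxtimes A_3^\text{bop}$ carries the tensor generator of each factor back into that factor, using Frobenius--Perron dimensions and twists as invariants. Your treatment of the $A_{11}$ generator is correct and matches the paper's table-based check. However, your treatment of the $A_3^\text{bop}$ generator contains a genuine error: $f^{(0)}\boxtimes f^{(1)}$ is \emph{not} the unique simple of Frobenius--Perron dimension $\sqrt{2}$. Since $[11]_q=1$ for $q=e^{i\pi/12}$ (the quantum dimensions of $A_{11}$ are palindromic, so $[1]_q=[11]_q=1$), the object $f^{(10)}\boxtimes f^{(1)}$ also has dimension $1\cdot\sqrt{2}=\sqrt{2}$; this second object is visible in the paper's dimension table. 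The omission matters, because the subcategory tensor-generated by $f^{(10)}\boxtimes f^{(1)}$ is $\{f^{(0)},f^{(10)}\}\boxtimes A_3^\text{bop}$, which strictly contains the $A_3^\text{bop}$ factor; so an auto-equivalence sending $f^{(0)}\boxtimes f^{(1)}$ to $f^{(10)}\boxtimes f^{(1)}$ would \emph{not} restrict to the factors, and your argument as written does not exclude it.

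The gap is easily repaired by the same twist comparison you already deploy for the other factor: $f^{(10)}$ has twist $-1$ in $A_{11}$ (it is the order-two invertible object), so $t(f^{(10)}\boxtimes f^{(1)})=-\,t(f^{(0)}\boxtimes f^{(1)})$, and since the $T$-matrix is a braided invariant the image of $f^{(0)}\boxtimes f^{(1)}$ is forced to be $f^{(0)}\boxtimes f^{(1)}$ itself. This is precisely the $(\FPdim,t)$-pair uniqueness check the paper records in its Tables~2 and~3. With that one sentence added, your proof is complete and coincides with the paper's.
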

\begin{proof}
Recall from Section~\ref{sec:centres} that $Z(E_6)=A_{11} \boxtimes A_3^\text{bop}$. Thus we have to show that every braided auto-equivalence of $A_{11} \boxtimes A_3^\text{bop}$ restricts to the factors. As described above we start by considering Frobenius-Perron dimensions and $t$-values. We compute tables for these values for all the simple objects of $A_{11} \boxtimes A_3^\text{bop}$  using the formulas in \cite{MR2640343,AN-Survey}.

\begin{table}[h!]

    \centering
    \resizebox{\linewidth}{!}{%
    \begin{tabular}{c|ccccccccccc}
    	\toprule
			FPdim     &                    &   &&&&$A_{11} $&&&&\\ 
	\midrule
			     & $1$                &  $1.932$    &   $2.732$       &   $3.346$   &    $3.732$   &  $3.864$       &  $3.732$  & $3.346$    & $2.732$   &   $1.932$   & $1$   \\ 
            	
    	$A_3^\text{bop}$ & $1.414$ & $2.732$   &  $  3.864$   &  $4.732$     &    $5.278$    &  $5.464$  &  $5.278$   &   $4.732$    & $  3.864$  &$2.732$  &$1.414$\\ 
	
	                   & $1$                &  $1.932$    &   $2.732$       &   $3.346$   &    $3.732$  &  $3.864$       &  $3.732$     & $3.346$    & $2.732$   &   $1.932$   & $1$   \\ 
    	                 
    	\bottomrule
    \end{tabular}}
    \caption{\label{tab:ZE6_Dimensions}Approximate Frobenius-Perron dimensions of the simple objects of $Z(E_6)$}
\end{table}

\begin{table}[h!]

    \centering
    \resizebox{\linewidth}{!}{%
    \begin{tabular}{c|ccccccccccc}
    	\toprule
		T	     &                    &   &&&&$A_{11} $&&&&\\ 
	\midrule
			     & $1$                &  $e^{\frac{15\pi i}{24}}$    &   $e^{\frac{8\pi i }{24}}$       &   $e^{\frac{27\pi i}{24}}$   &    $-1$                     &  $e^{\frac{47\pi i}{24}}$       &  $1$                     &  $e^{\frac{27\pi i}{24}}$    & $e^{\frac{32\pi i} {24}}$  &  $e^{\frac{15\pi i}{24}}$   & $-1$   \\ 
            	
    	$A_3^\text{bop}$ & $e^{\frac{21\pi i}{24}}$ & $ i$   &  $   e^{\frac{29\pi i}{24}}$   &  $1$     &    $e^{\frac{45\pi i}{24}}$    &  $e^{\frac{20\pi i}{24}}$  & $e^{\frac{21\pi i}{24}}$   &   $1$     & $e^{\frac{5\pi i}{24}}$  & $ i  $  & $e^{\frac{45\pi i}{24}}$\\ 
	
	                    & $-1$                &  $e^{\frac{39\pi i}{24}}$    &   $e^{\frac{32\pi i }{24}}$       &   $e^{\frac{3\pi i}{24}}$   &    $1$                     &  $e^{\frac{23\pi i}{24}}$       &  $-1$                     &  $e^{\frac{3\pi i}{24}}$    & $e^{\frac{8\pi i} {24}}$  &  $e^{\frac{39\pi i}{24}}$   & $1$   \\ 
    	                 
    	\bottomrule
    \end{tabular}}
    \caption{\label{tab:ZE6_T}$t$ values of the simple objects of $Z(E_6)$}
\end{table}

Consulting these tables we can see that the only simple object with the same pair of Frobenius-Perron dimension and $t$-value as the generating object of the $A_{11}$ subcategory is also in the $A_{11}$ subcategory. The pair for the generating object of the $A_3^\text{bop}$ subcategory is unique among all other simples. Thus the group of braided auto-equivalences of $A_{11} \boxtimes A_3^\text{bop}$ is the product of the braided auto-equivalence groups of the factors.
 \end{proof}

 \begin{lemma}\label{lem:ZE8_autos}
We have $\BrAut(Z(E_8))$ = $\BrAut(A_{29})\times \BrAut(\ad(A_4)^\text{bop})$.
 \end{lemma}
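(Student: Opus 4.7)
Following the strategy of Lemma~\ref{lem:ZE6_autos}, the plan is to show that every braided auto-equivalence of $Z(E_8) \simeq A_{29} \boxtimes \ad(A_4)^\text{bop}$ preserves each Deligne factor, whence it decomposes as a Deligne product of auto-equivalences of those factors.

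First I would verify that the generator $f^{(1)} \boxtimes f^{(0)}$ of the $A_{29}$ factor is characterised among all $58$ simples of $Z(E_8)$ by its pair of Frobenius--Perron dimension and $t$-value. Writing $q = e^{i\pi/30}$ and $\phi = (1+\sqrt{5})/2$, the simples split as $f^{(n)} \boxtimes f^{(0)}$ with $\FPdim = [n+1]_q$ and $f^{(n)} \boxtimes f^{(2)}$ with $\FPdim = \phi[n+1]_q$, while the twist of $f^{(n)}$ in $A_{29}$ is the standard value $e^{i\pi n(n+2)/60}$. The symmetry $[n+1]_q = [29-n]_q$ singles out $f^{(27)} \boxtimes f^{(0)}$ as the only other candidate in $A_{29} \boxtimes \mathbf{1}$ with $\FPdim = 2\cos(\pi/30)$, and the twists of $f^{(1)}$ and $f^{(27)}$ are $e^{i\pi/20}$ and $-e^{i\pi/20}$ respectively. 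In the second collection one checks that $2\cos(\pi/30)/\phi$ is not among the quantum integers $[k]_q$ for $1 \le k \le 29$, eliminating any remaining matches. Hence every braided auto-equivalence must fix $f^{(1)} \boxtimes f^{(0)}$ and so preserves the subcategory $A_{29} \boxtimes \mathbf{1}$.

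To obtain preservation of the second factor I would invoke modularity: $A_{29}$ is modular because $Z(A_{29}) \simeq A_{29} \boxtimes A_{29}^{\text{bop}}$, and $\ad(A_4)^\text{bop}$ is modular because $\ad(A_{2N})$ admits a modular braiding (as in the earlier computation giving $Z(\ad(A_{2N})) \simeq \ad(A_{2N}) \boxtimes \ad(A_{2N})^\text{bop}$). Thus $A_{29} \boxtimes \ad(A_4)^\text{bop}$ is itself modular, and the M\"uger centraliser of $A_{29} \boxtimes \mathbf{1}$ inside the product equals $\mathbf{1} \boxtimes \ad(A_4)^\text{bop}$. Since braided auto-equivalences preserve M\"uger centralisers, the $\ad(A_4)^\text{bop}$ factor is also preserved, and any auto-equivalence preserving both Deligne factors splits as a Deligne product of auto-equivalences of each, giving the claimed isomorphism.

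The main obstacle is the dimension-counting step: ruling out numerical coincidences between $\phi$-scaled and unscaled quantum integers at the primitive $60$th root of unity $q$. Once this check is carried out, the M\"uger centraliser argument disposes of the second factor cleanly, without the need for a complete table of dimensions and twists as in the $Z(E_6)$ case.
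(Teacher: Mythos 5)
Your proof is correct, and the treatment of the first factor (locating $f^{(1)}\boxtimes f^{(0)}$ by Frobenius--Perron dimension, with $f^{(27)}\boxtimes f^{(0)}$ as the only other candidate, itself inside the $A_{29}$ factor) is essentially the paper's argument; your extra check that $[2]_q/\phi$ is not a quantum integer is exactly the implicit content of the paper's claim that ``the only other object with the same dimension is $f^{(27)}\boxtimes f^{(0)}$.'' Where you genuinely diverge is the second factor. The paper handles $f^{(0)}\boxtimes f^{(2)}$ by hand: dimension counting leaves the two candidates $f^{(0)}\boxtimes f^{(2)}$ and $f^{(28)}\boxtimes f^{(2)}$, and the latter is excluded by transporting the nonzero morphism $f^{(2)}\otimes f^{(2)}\to f^{(2)}$ through a hypothetical auto-equivalence, which would force a nonzero morphism $f^{(0)}\to f^{(28)}$ in $A_{29}$ --- a contradiction. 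You instead observe that both factors are modular (which the paper does establish elsewhere: $Z(A_{29})\simeq A_{29}\boxtimes A_{29}^{\text{bop}}$ and $\ad(A_4)$ admits a modular braiding), so the M\"uger centraliser of $A_{29}\boxtimes\mathbf{1}$ in the product is $\mathbf{1}\boxtimes\ad(A_4)^{\text{bop}}$, and braided auto-equivalences carry centralisers to centralisers. Your route is more conceptual and buys generality: once one Deligne factor of a product of modular categories is known to be preserved, the other comes for free, with no case-by-case fusion-rule analysis; the paper's route is more elementary and does not invoke modularity of the factors at this point. Both are complete proofs.
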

 \begin{proof}
 Recall from Section~\ref{sec:centres} that the Drinfeld centre of $E_8$ is $A_{29}\boxtimes \ad(A_4)^\text{bop}$. This proof will be similar to the $E_6$ case in that we show the generating objects of the $A_{29}$ and $\ad(A_4)^\text{bop}$ factors are mapped within the original factor, and hence the auto-equivalence group of the centre is the product of the auto-equivalence group of the two factors.

Let's first look at $f^{(1)}\boxtimes f^{(0)}$, the generating object of the $A_{29}$ factor. The only other object with the same dimension is the object $f^{(27)}\boxtimes f^{(0)}$. This other object also lives in the $A_{29}$ factor and thus any auto-equivalence of $A_{29}\boxtimes \ad(A_4)^\text{bop}$ maps the generating object of the $A_{29}$ factor within the $A_{29}$ factor, and hence restricts to an auto-equivalence of the $A_{29}$ factor.

Now we look at the generating object of the $\ad(A_4)^\text{bop}$ factor, that is $f^{(0)}\boxtimes f^{(2)}$. Considering dimensions there are two possible simple objects that $f^{(0)}\boxtimes f^{(2)}$ may be sent to. These are $f^{(0)}\boxtimes f^{(2)}$, and $f^{(28)}\boxtimes f^{(2)}$. Aiming for a contradiction, suppose there was an auto-equivalence of $A_{29} \boxtimes  \ad(A_4)^\text{bop}$ sending $f^{(0)}\boxtimes f^{(2)}$ to $f^{(28)}\boxtimes f^{(2)}$. In the category $ \ad(A_4)^\text{bop}$ there exists a non-zero morphism $f^{(2)}\otimes f^{(2)} \to f^{(2)}$. Embedding this morphism into $A_{29} \boxtimes  \ad(A_4)^\text{bop}$ and applying our auto-equivalence gives a non-zero morphism
\begin{equation*}
f^{(0)}\boxtimes  (f^{(2)}\otimes f^{(2)})  \to f^{(28)}  \boxtimes f^{(2)}.
\end{equation*}

However this implies that there exists a non-zero morphism from $f^{(0)}\to f^{(28)}$ in $A_{29}$, which is a contradiction as $f^{(0)}$ and $f^{(28)}$ are non-isomorphic simple objects. Thus by the same argument as earlier any auto-equivalence restricts to an auto-equivalence of the $\ad(A_4)^\text{bop}$ factor.

\end{proof}

The rest of the Drinfeld centres (with the exception of the centres of certain $A_N$'s) described in Section~\ref{sec:centres} which are products have braided auto-equivalence groups which decompose in a similar fashion. 
\begin{lemma}\label{lem:tensor_decomp}
We have the following:
\begin{align*}
\BrAut( Z(\ad(A_{2N}))) &=   \BrAut( \ad(A_{2N})) \times \BrAut(\ad(A_{2N})^\text{bop})  \\
\BrAut( Z(A_{2N}^\text{deg} ) ) &= \BrAut( \ad(A_{2N}^\text{deg})) \times \BrAut(\ad(A_{2N}^\text{deg})^\text{bop}) \times \BrAut( Z(\ad(A_3))) \\
\BrAut( Z(D_{2N}) &=   \BrAut( A_{4N-3}) \times \BrAut(\ad(D_{2N})^\text{bop}) \\
\BrAut(Z(\ad(D_{2N}))) &=    \BrAut( \ad(D_{2N})) \times \BrAut(\ad(D_{2N})^\text{bop})  \\
\BrAut( Z( \ad(E_8))) &=   \BrAut( \ad(D_{16})) \times \BrAut(\ad(A_4)^\text{bop}).
\end{align*}

\end{lemma}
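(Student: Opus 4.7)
The plan is to mimic the strategy used in Lemmas~\ref{lem:ZE6_autos} and \ref{lem:ZE8_autos}: for each product decomposition we show that every braided auto-equivalence sends each tensor factor into itself, and hence restricts to a braided auto-equivalence of that factor. The injection $\prod_i \BrAut(C_i) \to \BrAut(\boxtimes_i C_i)$ being surjective then follows. Since Frobenius-Perron dimension and $t$-value are invariants preserved by any braided auto-equivalence, it will be enough in most cases to tabulate these invariants on simple objects of each factor (using the formulas in \cite{MR2640343,AN-Survey}) and verify that the pair $(\FPdim, t)$ of the generating object of each factor is either unique in the product, or else shared only with objects in the same factor.

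I would work through the five cases in turn. For $Z(\ad(A_{2N}))$, $Z(\ad(D_{2N}))$, and $Z(A_{2N}^\text{deg})$, the tensor factors are Galois conjugate (or identical up to orientation), so after listing dimensions and twists the relevant coincidences of $(\FPdim, t)$-pairs will only occur \emph{within} a factor, giving the restriction immediately; for $Z(A_{2N}^\text{deg})$ one additionally notes that the third factor $Z(\ad(A_3))$ has simple objects of dimension $1$, easily distinguished from those of $\ad(A_{2N}^\text{deg})$ by $t$-values and fusion rules. For $Z(D_{2N}) \simeq A_{4N-3} \boxtimes \ad(D_{2N})^\text{bop}$ and $Z(\ad(E_8)) \simeq \ad(D_{16}) \boxtimes \ad(A_4)^\text{bop}$, the generating objects of the two factors have distinct dimensions from each other, so the only ambiguity is within each factor, and the same tabulation argument closes the case.

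The one recurring obstacle will be situations where the $(\FPdim, t)$-pair of a generating object is matched by an object lying in the \emph{other} factor (this is exactly what happened in Lemma~\ref{lem:ZE8_autos} with $f^{(0)} \boxtimes f^{(2)}$ versus $f^{(28)} \boxtimes f^{(2)}$). In each such case I would argue as in that lemma: if a hypothetical auto-equivalence $F$ sent the generator $X$ of one factor to an object $X'$ not in that factor, then applying $F$ to a non-zero morphism $X \otimes X \to X$ produces a non-zero morphism between non-isomorphic simples in the other factor, a contradiction. This fusion-rule argument handles every remaining stray coincidence of $(\FPdim, t)$-pairs across factors.

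Once each braided auto-equivalence is shown to restrict factorwise, the decomposition of $\BrAut$ as a direct product follows formally: the restriction map $\BrAut(\boxtimes_i C_i) \to \prod_i \BrAut(C_i)$ is a group homomorphism which is inverse to the obvious injection built from component-wise action. Combined with the computations of $\BrAut$ of each factor from Section~\ref{sec:autos}, this completes the proof of all five identities.
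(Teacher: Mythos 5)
Your proposal follows exactly the strategy the paper uses: the paper's proof of this lemma is a one-line appeal to the same techniques as Lemmas~\ref{lem:ZE6_autos} and~\ref{lem:ZE8_autos}, namely tabulating $(\FPdim,t)$-pairs of generating objects of the factors and handling any cross-factor coincidences with the fusion-rule argument. Your write-up is in fact more detailed than the paper's, but the approach is the same.
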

\begin{proof}
The proof of these statements use the same techniques as in Lemmas~\ref{lem:ZE6_autos} and \ref{lem:ZE8_autos}. 
\end{proof}

Finally we need to deal with the $A_N$'s that admit modular braidings. Recall that when $A_N$ is modular the centre is $A_N \boxtimes A_N^\text{bop}$.
\begin{lemma}
We have \[\BrAut(A_N \boxtimes A_N^\text{bop})  \begin{cases} 
     \{e\} & N  \equiv \{0,2\} \pmod 4 \\
        \mathbb{Z} / 2\mathbb{Z} & N  \equiv 1  \pmod 4 \\
        (\mathbb{Z} / 2\mathbb{Z})^2 & N   \equiv 3 \pmod 4. \\
   \end{cases}
\]
\end{lemma}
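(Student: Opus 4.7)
The plan is to execute the three-step strategy used throughout Section~\ref{sec:autos} one final time: bound $\BrAut(A_N \boxtimes A_N^\text{bop})$ above by analysing the twist-preserving fusion ring automorphisms of the Deligne product, use the absence of gauge auto-equivalences to guarantee this is an honest upper bound, and finally realise the upper bound by constructing enough braided auto-equivalences or, equivalently via the Etingof--Nikshych--Ostrik isomorphism, enough invertible bimodules.

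First I will classify the relevant fusion ring automorphisms of $A_N \boxtimes A_N^\text{bop}$. Beyond the ``obvious'' automorphisms---all pairs $(\phi_1,\phi_2)$ of factor-wise fusion ring automorphisms of $A_N$ (of which there is one nontrivial example, the $\sigma$ of Lemma~\ref{lem:brAutAN}, precisely when $N$ is odd), optionally composed with the factor swap---the Deligne product admits further automorphisms when $A_N$ has repeated simple-object dimensions. This is the case for all odd $N \geq 5$, where $f^{(1)}$ and $f^{(N-2)}$ both have Frobenius--Perron dimension $[2]_q$, so that the dimension $[2]_q$ simples of the Deligne product form a large orbit under tensoring by the invertible objects of the factors. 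Using the closed formula $t_{f^{(n)}} = e^{\pi i n(n+2)/(2(N+1))}$ for the $A_N$ twists, one computes that $t_{f^{(N-1)}}$ equals $1$ for $N \equiv 1 \pmod 4$ and $-1$ for $N \equiv 3 \pmod 4$. Consequently the ``invertible-twist'' candidate $f^{(i)} \boxtimes f^{(j)} \mapsto f^{(i)} \boxtimes (f^{(N-1)} \otimes f^{(j)})$ (and its companions with invertible factors swapped) preserves twists only in the $N \equiv 1 \pmod 4$ residue.

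A careful enumeration then yields the twist-preserving subgroup in each residue class of $N$ modulo $4$: trivial for $N \equiv 0, 2 \pmod 4$; of order two for $N \equiv 1 \pmod 4$ (the nontrivial element being the mixed invertible-twist automorphism described above); and of order four for $N \equiv 3 \pmod 4$ (generated by $(\sigma, e)$ and $(e, \sigma)$, since $\sigma$ is braided on $A_N$ in this residue by Lemma~\ref{lem:brAutAN}). Because Lemma~\ref{lem:pa_Autos_AN} forbids nontrivial braided planar algebra automorphisms of $A_N$, and because a gauge auto-equivalence of the Deligne product restricts to gauge auto-equivalences of each factor (by evaluation on objects of the form $X \boxtimes \mathbf{1}$ and $\mathbf{1} \boxtimes Y$), the Deligne product has no nontrivial gauge braided auto-equivalences either, and $\BrAut(A_N \boxtimes A_N^\text{bop})$ is bounded above by the twist-preserving subgroup.

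Finally I will realise these upper bounds. For $N \equiv 3 \pmod 4$ the two nontrivial factor-wise braided auto-equivalences of Lemma~\ref{lem:brAutAN}, acting independently on each Deligne factor, give $(\mathbb{Z}/2\mathbb{Z})^2 \hookrightarrow \BrAut(A_N \boxtimes A_N^\text{bop})$ directly. For $N \equiv 1 \pmod 4$ no factor-wise braided auto-equivalence is available; instead, via the Etingof--Nikshych--Ostrik isomorphism $\BrAut(Z(A_N)) \cong \BrPic(A_N)$, the twisted identity bimodule $(A_N)_\sigma$ built from the exceptional outer tensor auto-equivalence $\sigma$ of Lemma~\ref{lem:tenAutAN} supplies the required nontrivial element; since inner auto-equivalences of a braided fusion category are trivially naturally isomorphic to the identity, $\sigma$ is genuinely outer and $(A_N)_\sigma$ is genuinely non-trivial in $\BrPic$. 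The main obstacle I expect is the careful combinatorial enumeration of fusion ring automorphisms in the upper-bound step, in particular showing that there are no twist-preserving automorphisms beyond the two families already identified once the $A_N$ twist formula is applied to the full $(\mathbb{Z}/2)^2$-graded structure of $A_N \boxtimes A_N^\text{bop}$.
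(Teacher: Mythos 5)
Your proposal is correct and follows essentially the same route as the paper: bound $\BrAut(A_N\boxtimes A_N^{\text{bop}})$ above by the twist-preserving fusion ring automorphisms (using that there are no gauge auto-equivalences), realise $(\mathbb{Z}/2\mathbb{Z})^2$ for $N\equiv 3\pmod 4$ by factor-wise braided auto-equivalences from Lemma~\ref{lem:brAutAN}, and realise the mixed automorphism for $N\equiv 1\pmod 4$ via the image of the outer tensor auto-equivalence of Lemma~\ref{lem:tenAutAN} under $\Out_\otimes(A_N)\hookrightarrow\BrAut(Z(A_N))$. The only slip is notational: as written your ``invertible-twist'' map $f^{(i)}\boxtimes f^{(j)}\mapsto f^{(i)}\boxtimes(f^{(N-1)}\otimes f^{(j)})$ does not fix the unit, so it should be twisted by the $\mathbb{Z}/2\mathbb{Z}$-grading (equivalently, defined on the generators $f^{(1)}\boxtimes f^{(0)}\mapsto f^{(1)}\boxtimes f^{(N-1)}$, $f^{(0)}\boxtimes f^{(1)}\mapsto f^{(N-1)}\boxtimes f^{(1)}$ as in the paper).
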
  
\begin{proof}
\textbf{Case $N  \equiv \{0,2\} \pmod 4 $:}

When $N$ is even the generating objects of the $A_N$ and $A_N^\text{bop}$ factors of $A_N \boxtimes A_N^\text{bop}$ are both fixed by any braided auto-equivalence. This can be seen by a fusion rule / $t$-value argument. Thus $\BrAut(A_N \boxtimes A_N^\text{bop}) = \BrAut(A_N) \times \BrAut(A_N^\text{bop})$ which we know is trivial from Lemma~\ref{lem:brAutAN}.

\textbf{Case $N   \equiv 1 \pmod 4$:}

Again we look at the generating objects of the $A_N$ and $A_N^\text{bop}$ factors. It suffices to consider only auto-equivalences that move these objects as if they were fixed then we could apply Lemma~\ref{lem:brAutAN} to see that the auto-equivalence is trivial. By considering fusion rules and $t$-values we see that there are only two possible auto-equivalences of $A_N \boxtimes A_N^\text{bop}$. The non-trivial auto-equivalence is determined by:
\begin{equation*}
f^{(1)}\boxtimes f^{(0)} \mapsto f^{(1)}\boxtimes f^{(N-1)} \text{ and } f^{(0)}\boxtimes f^{(1)} \mapsto f^{(N-1)}\boxtimes f^{(1)}.
\end{equation*}

There is an injective map $\operatorname{Out}_\otimes(C) \hookrightarrow \BrAut(Z(C))$ that sends the outer tensor auto-equivalence $F$ to the invertible bimodule $_CC_{F(C)}$. Lemma~\ref{lem:tenAutAN} shows that there is a non-trivial outer tensor auto-equivalence of $A_N$. Thus $ \mathbb{Z} /2\mathbb{Z} \subseteq \BrAut(A_N \boxtimes A_N^\text{bop})$ and we are done.

\textbf{Case $N \equiv 3 \pmod 4 $:}

By considering fusion rules and $t$-values it can be shown that every braided auto-equivalence of $A_N \boxtimes A_N^\text{bop}$ restricts to braided auto-equivalences of the factors. Therefore $\BrAut(A_N \boxtimes A_N^\text{bop}) =  \BrAut(A_N) \times \BrAut(A_N^\text{bop}) = (\mathbb{Z} / 2\mathbb{Z})^2$.
\end{proof}

Putting everything together we can now complete the proof of our main Theorem.

\begin{proof}[Proof of Theorem \ref{thm:BrPic}]
The results of Section~\ref{sec:autos} and \ref{sec:BrPic} compute the braided auto-equivalence group of the centre of a representative for each Galois orbit of the $ADE$ fusion categories and adjoint subcategories. The isomorphism $\BrAut(Z(C)) \cong \BrPic(C)$ gives us the Brauer-Picard groups. Finally applying Lemma~\ref{lem:galoisPreservesBrPic} extends our computation to every fusion category realising the $ADE$ fusion rules and the associated adjoint subcategories.

\end{proof}

 \section{Explicit constructions of invertible bimodules from braided autoequivalences}\label{sec:D10}

To compute the extension theory of a fusion category one ideally wants to have explicit descriptions of the invertible bimodules. So far our analysis has just revealed the group structure of the Brauer-Picard groups. In most cases this is sufficient to completely understand the invertible bimodules over the category e.g. The Brauer-Picard group of $\ad(E_6)$ is $\mathbb{Z} /2 \mathbb{Z}$ which corresponds to the two graded pieces of $E_6$. Exceptions to this are the $\ad(D_{10})$ categories which have Brauer-Picard group $S_3 \times S_3$, and the $\ad(A_7)$ categories which have Brauer-Picard group $D_{2\cdot 4}$. The aim of this Section is to give explicit descriptions of all the invertible bimodules over these categories. We achieve this by chasing through the isomorphism $\BrPic(C) \cong \BrAut(Z(C))$.

An invertible bimodule over $C$ corresponds to a left $C$-module $M$ along with an equivalence $C \xrightarrow{\sim} \operatorname{Fun}(M,M)_C$ \cite{MR2677836}. This description gives rise to a natural action of $\Aut(C)$ on $\BrPic(C)$ by precomposing a tensor auto-equivalence of $C$ to get another equivalence $C\xrightarrow{\sim} C \xrightarrow{\sim} \operatorname{Fun}(M,M)_C$. If we restrict this action to outer auto-equivalences of $C$ then this action is free and faithful. Therefore up to the action of $\Out_\otimes(C)$, invertible bimodules over $C$ correspond to left $C$-module categories such that $C \cong \operatorname{Fun}(M,M)_C$. In the language of algebra objects this corresponds to finding all simple algebra objects $A \in C$ such that $A$-bimod $\cong C$. The isomorphism $\BrPic(C) \cong \BrAut(Z(C))$ allows us to compute the underlying algebra object $A$ corresponding to an invertible bimodule we get from a braided auto-equivalence of $Z(C)$.
 \begin{cons}\cite{MR2677836}
 
 Let $F \in \BrAut{Z(C)}$. We will construct an algebra $A \in C$ such that $A-\operatorname{mod}_C$ is equivalent to the image of $F$ under the isomorphism~\ref{eq:iso} (as left $C$-module categores). The object $\mathbf{1}$ is trivially an algebra object in $C$, therefore inducing $\mathbf{1}$ up to the centre of $C$ gives an algebra object of the centre, as the induction functor is lax monoidal. As $F$ is a tensor auto-equivalence $F^{-1}(I(\mathbf{1}))$ also has the structure of an algebra. Finally restricting back down to $C$ gives us an algebra object back in $C$. However $R(F^{-1}(I(\mathbf{1})))$ may not be indecomposable as an algebra object. Let $A$ be any simple algebra object in the decomposition of $R(F^{-1}(I(\mathbf{1})))$, then independent of choice of $A$ the corresponding module category is always the same. Thus we can choose any such $A$.
 \end{cons}
 
 \subsection{Invertible bimodules over $\ad(D_{10})$}

The issue with directly applying the above construction to get such an $A$ is that it can be difficult to determine how $R(F^{-1}(I(\mathbf{1})))$ decomposes into simple algebra objects. To deal with this problem we use the algorithm described in \cite[Chapter 3]{MR2909758} to obtain a finite list of possible simple algebra objects in $\ad(D_{10})$. Note that the algorithm does not guarantee all of the objects returned can be realised as algebra objects. We include the rank of the corresponding module category as this information will be a useful later.
 
 \begin{prop}\label{lem:AlgD10}
 Let $A$ be an algebra object of $\ad(D_{10})$. Then $A$ is one of: 
 
  \begin{table}[h!]

    \centering
    \resizebox{\linewidth}{!}{%
    \begin{tabular}{c|c|c}
    \toprule

			     Rank $3$ & Rank $4$ & Rank $6$  \\ 
	\midrule
		       $\mathbf{1} \oplus  f^{(6)}$ &  $\mathbf{1} \oplus f^{(2)}$ &  $\mathbf{1}$ \\
    	                  $\mathbf{1} \oplus f^{(2)} \oplus  f^{(6)} \oplus P\oplus Q$  &  $\mathbf{1} \oplus P$ & $\mathbf{1} \oplus  f^{(4)} \oplus P$ \\
	                   $\mathbf{1} \oplus 2f^{(2)}\oplus 3 f^{(4)}\oplus 4 f^{(6)}\oplus 2P\oplus 2Q$& $\mathbf{1} \oplus Q$ & $\mathbf{1} \oplus  f^{(4)} \oplus Q$ \\
	                   & $\mathbf{1} \oplus f^{(2)} \oplus  f^{(4)} \oplus  f^{(6)}$ & $\mathbf{1} \oplus f^{(2)} \oplus  f^{(4)}$ \\
	                   &  $\mathbf{1} \oplus  f^{(4)} \oplus  f^{(6)} \oplus P$ &$\mathbf{1} \oplus f^{(2)} \oplus  f^{(4)}\oplus  f^{(6)}\oplus P\oplus Q$ \\
	                   & $\mathbf{1} \oplus  f^{(4)} \oplus  f^{(6)} \oplus Q$ & $\mathbf{1} \oplus f^{(2)} \oplus 2 f^{(4)}\oplus 2 f^{(6)}\oplus P\oplus Q$ \\
	                   &  $\mathbf{1} \oplus f^{(2)} \oplus  f^{(4)}\oplus 2 f^{(6)}\oplus P\oplus Q$& \\
	                   & $\mathbf{1} \oplus 2f^{(2)} \oplus 2 f^{(4)}\oplus 2 f^{(6)}\oplus P\oplus Q$ & \\
	                   & $\mathbf{1} \oplus f^{(2)} \oplus 2 f^{(4)}\oplus 2 f^{(6)}\oplus 2P\oplus Q$ & \\
			&  $\mathbf{1} \oplus f^{(2)} \oplus 2 f^{(4)}\oplus 2 f^{(6)}\oplus P\oplus 2Q$& \\
    	\bottomrule
    \end{tabular}}
\end{table}

 \end{prop}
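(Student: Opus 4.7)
The plan is to apply the enumeration algorithm of \cite{MR2909758}[Chapter 3]. The key observation is that any simple algebra $A \in \ad(D_{10})$ gives rise to a left module category $M = A\text{-mod}_{\ad(D_{10})}$ of some finite rank $n$, and the action of $\ad(D_{10})$ on $M$ is encoded by a system of non-negative integer matrices $\{[X]\}_{X \in \operatorname{Irr}(\ad(D_{10}))}$, each of size $n\times n$, satisfying the fusion relations of $\ad(D_{10})$, with $[\mathbf{1}] = I_n$ and $[X^*] = [X]^T$. The underlying object of $A$ is then recovered via $[A : X] = [X]_{1,1}$, after fixing a base simple $m_1 \in M$ with $A = \underline{\operatorname{End}}(m_1)$.

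First I would bound the rank $n$. Since $\FPdim(A)\cdot\FPdim(M) = \FPdim(\ad(D_{10}))$ and $\FPdim(A) \geq 1$, the rank $n$ is bounded above by $\FPdim(\ad(D_{10}))$. Moreover the FPdim vector $(\FPdim(m_1),\dots,\FPdim(m_n))$ on $M$ must be the Perron eigenvector of $[f^{(2)}]$, which, together with $\sum_i \FPdim(m_i)^2 = \FPdim(M)$, severely restricts admissible $n$. For $\ad(D_{10})$ these constraints should single out exactly $n \in \{3,4,6\}$.

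Next, for each admissible $n$, I would enumerate all valid tuples of action matrices. Since the fusion subring of $\ad(D_{10})$ is generated as an augmented $\mathbb{Z}$-algebra by $f^{(2)}$ together with $P$ (which distinguishes the two boundary vertices), it suffices to enumerate integer matrices $[f^{(2)}]$ and $[P]$ of size $n\times n$ satisfying the defining relations (Jones-Wenzl recursion at level $9$, plus $P+Q = f^{(8)}$, $PQ = f^{(6)}$, etc.). This is a finite combinatorial search. From each valid solution I would read off the candidate algebra $A = \bigoplus_X [X]_{1,1} X$, verify that $[A:\mathbf{1}] = 1$ (equivalent to $M$ being connected), and discard solutions duplicated under relabeling of the basis $\{m_1,\dots,m_n\}$.

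The main obstacle will be the rank-$4$ and rank-$6$ enumerations, where the number of integer matrix tuples to check grows rapidly and many different matrix systems encode the same algebra up to an outer automorphism of $M$. I would handle this by first fixing the Perron eigenvector of $[f^{(2)}]$ (determining the FPdims $\FPdim(m_i)$), then using the bound $[X]_{ij} \leq \min(\FPdim(X)\FPdim(m_j)/\FPdim(m_i), \FPdim(X)\FPdim(m_i)/\FPdim(m_j))$ to restrict entries, and finally checking the fusion relations directly. Once the enumeration terminates, the list in the statement should emerge; the question of which candidates are actually realised as algebras is deferred to the subsequent analysis in the paper.
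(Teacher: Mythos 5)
Your proposal is correct and is essentially the paper's own approach: the paper proves this proposition simply by stating that the list was produced by the enumeration algorithm of \cite{MR2909758} (Chapter 3), which is precisely the NIM-rep search over rank-bounded, Perron-eigenvector-constrained integer matrix actions that you describe, with the underlying algebra read off as $\underline{\operatorname{End}}(m_1)$. Like the paper, you correctly defer the question of which candidates are actually realised as algebra objects to the subsequent analysis.
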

\begin{proof}
This list was computed using the algorithm from \cite[Chapter 3]{MR2909758}. Our implementation was tested against the results of \cite{MR2909758}.
\end{proof}

With this list of possible simple algebra objects we wish to determine how $R(F^{-1}(I(\mathbf{1})))$ decomposes into simple algebra objects.

 Recall from Lemmas~\ref{lem:aut_D10} and \ref{lem:tensor_decomp} that the group of braided auto-equivalences of $Z(\ad(D_{10})) = \ad(D_{10}) \boxtimes \ad(D_{10})^\text{bop}$ is $S_3 \times S_3$ where each $S_3$ factor independently permutes the objects $f^{(2)},P$ and $Q$ in $\ad(D_{10})$ and $\ad(D_{10})^\text{bop}$ respectively.
  
In general describing the induction and restriction functors between the categories $C$ and $Z(C)$ is a difficult problem. However as $\ad(D_{10})$ is modular these functors behave quite nicely. The induction of the tensor identity $\mathbf{1}$ up to the centre $\ad(D_{10}) \boxtimes \ad(D_{10})^\text{bop}$ gives the object $\oplus_{X \in \operatorname{Irr}(\ad(D_{10}))} X\boxtimes X^\text{bop}$ where we write $X^\text{bop}$ to specify the object $X$ in the opposite category. The restriction of an object $X \boxtimes Y^\text{bop}$ in $\ad(D_{10}) \boxtimes \ad(D_{10})^\text{bop}$ back down to $\ad(D_{10})$ is given by $X\otimes Y^*$. We compute the following table of $R(F^{-1}(I(\mathbf{1})))$ for each $F\in \BrAut(Z(D_{10}))$.
 
 \begin{table}[h!]\label{tab:big_alg_objects}

    \centering
    \resizebox{\linewidth}{!}{%
    \begin{tabular}{cc|cccccc}
    \toprule
                              &&&&&$\BrAut(\ad(D_{10}))$&&\\
    	
			     &$\times$&          $id$         &   $P\leftrightarrow Q$& $f^{(2)}\leftrightarrow P$ &    $f^{(2)}\leftrightarrow Q$&  $f^{(2)}\rightarrow P \rightarrow Q$   & $f^{(2)}\rightarrow Q \rightarrow P$  \\ 
	\midrule
			   &$id$    & $I$       &  $B_{f^{(2)}}$  &   $B_Q $ & $B_P$ &    $A$    &  $A$      \\ 
            	
    	                     &$P\leftrightarrow Q$ & $B_{f^{(2)}}$ &     $ I$    &  $  A$      &  $A$    & $B_P$  &  $B_Q$     \\ 
	
   	                 &$f^{(2)}\leftrightarrow P$ & $B_Q$  & $A $      &  $I$         &  $A$     &  $B_{f^{(2)}}$ &   $B_P$   \\  
	                     
	                   \raisebox{\dimexpr 2ex}[0pt][0pt]{\rotatebox[origin=c]{90}{ $\BrAut(\ad(D_{10}^\text{bop}))$}}    &$f^{(2)}\leftrightarrow Q$& $B_P$  &  $A$      &   $A$       &   $I$     & $B_Q$  &  $B_{f^{(2)}}$      \\ 
            	
    	                     & $f^{(2)}\rightarrow P \rightarrow Q$& $A$      & $ B_P$  &  $  B_{f^{(2)}}$ & $B_Q$ &  $I$       &  $A$     \\ 
	
   	                   & $f^{(2)}\rightarrow Q \rightarrow P$ & $A$      & $B_Q $  &  $B_P$    & $B_{f^{(2)}}$ &    $A$   &   $I$  \\
    	                 
    	\bottomrule
    \end{tabular}}
\end{table}
 \newpage
Where 
\begin{align*}
I & := 6\mathbf{1} \oplus 3f^{(2)} \oplus 6 f^{(4)} \oplus 3 f^{(6)} \oplus 3P \oplus 3Q \\
A &:= 3\mathbf{1} \oplus 3f^{(2)} \oplus 3 f^{(4)} \oplus 6 f^{(6)} \oplus 3P \oplus 3Q \\
B_{f^{(2)}} &:= 4\mathbf{1} \oplus 5f^{(2)} \oplus 4 f^{(4)} \oplus 5 f^{(6)} \oplus 2P \oplus 2Q \\
B_P &:= 4\mathbf{1} \oplus 2f^{(2)}\oplus 4 f^{(4)} \oplus 5 f^{(6)} \oplus 5P \oplus 2Q \\
B_Q &:= 4\mathbf{1} \oplus 2f^{(2)} \oplus 4 f^{(4)} \oplus 5 f^{(6)} \oplus 2P \oplus 5Q.
\end{align*}

\begin{lemma}\label{lem:decomp1}
There exist unique decompositions of $I$, $B_{f^{(2)}}$,$B_P$, and $B_Q$ into simple algebra objects. These decompositions are
\begin{align*}
I =& (\mathbf{1}) \oplus (\mathbf{1} \oplus  f^{(4)} \oplus P) \oplus (\mathbf{1} \oplus  f^{(4)} \oplus Q) \oplus (\mathbf{1} \oplus f^{(2)} \oplus  f^{(4)}) \oplus (\mathbf{1} \oplus f^{(2)} \oplus  f^{(4)}\oplus  f^{(6)}\oplus P\oplus Q) \\ 
     & \oplus (\mathbf{1} \oplus f^{(2)} \oplus 2 f^{(4)}\oplus 2 f^{(6)}\oplus P\oplus Q), \\
B_{f^{(2)}} =& (\mathbf{1} \oplus f^{(2)}) \oplus (\mathbf{1} \oplus f^{(2)} \oplus  f^{(4)} \oplus  f^{(6)}) \oplus (\mathbf{1} \oplus f^{(2)} \oplus  f^{(4)}\oplus 2 f^{(6)}\oplus P\oplus Q) \\
   & \oplus  (\mathbf{1} \oplus 2 f^{(2)} \oplus 2 f^{(4)}\oplus 2 f^{(6)}\oplus P\oplus Q), \\
B_P = & (\mathbf{1} \oplus P) \oplus (\mathbf{1} \oplus  f^{(4)} \oplus  f^{(6)} \oplus P) \oplus (\mathbf{1} \oplus f^{(2)} \oplus  f^{(4)}\oplus 2 f^{(6)}\oplus P\oplus Q)  \\
     &\oplus  (\mathbf{1} \oplus f^{(2)} \oplus 2 f^{(4)}\oplus 2 f^{(6)}\oplus 2P\oplus Q), \\
B_Q =& (\mathbf{1} \oplus Q) \oplus (\mathbf{1} \oplus  f^{(4)} \oplus  f^{(6)}\oplus Q) \oplus (\mathbf{1} \oplus f^{(2)} \oplus  f^{(4)}\oplus 2 f^{(6)}\oplus P\oplus Q)  \\
   &\oplus  (\mathbf{1} \oplus f^{(2)} \oplus 2 f^{(4)}\oplus 2 f^{(6)}\oplus P\oplus 2Q).
\end{align*}
\end{lemma}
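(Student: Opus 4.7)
The plan is to reduce the uniqueness claim to a finite combinatorial search over the list of candidate simple algebra objects in Proposition~\ref{lem:AlgD10}. The first observation is the standard fact that every simple (i.e. indecomposable) algebra object $A$ in $\ad(D_{10})$ contains the tensor identity $\mathbf{1}$ with multiplicity exactly one: the unit map $\mathbf{1} \to A$ supplies one copy, and a second copy would split $A$ as a direct sum of nontrivial algebras. Therefore, if $X$ is one of $I, B_{f^{(2)}}, B_P, B_Q$ and $X \cong A_1 \oplus \cdots \oplus A_k$ is a decomposition into simple algebras, then $k$ equals the multiplicity of $\mathbf{1}$ in $X$. Inspecting the definitions gives $k=6$ for $I$ and $k=4$ for $B_{f^{(2)}}, B_P, B_Q$.

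Next I would use the Frobenius-Perron dimensions to cut down the search space drastically. Compute $\FPdim(X)$ for each of the four objects and compare against the multiset of Frobenius-Perron dimensions of the candidates in Proposition~\ref{lem:AlgD10}, partitioned by rank. Since the candidate algebra objects have quite different FPdims (and each rank class in the table is small), the condition
\begin{equation*}
\FPdim(X) = \sum_{i=1}^{k} \FPdim(A_i)
\end{equation*}
together with $k$ fixed already eliminates most combinations. After that, for each surviving multiset $\{A_1,\dots,A_k\}$ of candidates I would check the multiplicity of every simple object of $\ad(D_{10})$: the decomposition must match $X$ term by term in the Grothendieck ring. This is a finite, mechanical verification.

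The main obstacle is keeping the bookkeeping honest: the rank-$4$ list in Proposition~\ref{lem:AlgD10} is long, so for $B_{f^{(2)}}, B_P, B_Q$ there are a priori many candidate $4$-tuples with the correct total FPdim, and one must be careful to enumerate all of them before concluding uniqueness. I would organize the case analysis by first fixing which rank-$6$ or rank-$3$ summands (if any) appear, since those are more restrictive, then filling in the rank-$4$ pieces; symmetries under the $P \leftrightarrow Q$ swap let one handle $B_P$ and $B_Q$ simultaneously. A useful sanity check at the end: since the module category attached to each invertible bimodule has rank equal to $\FPdim(\ad(D_{10}))/\FPdim(A)$, the ranks of the resulting module categories should be consistent with the orbit structure of the $S_3 \times S_3$ action in Table~\ref{tab:big_alg_objects}, and in particular with the fact that distinct elements of $\BrPic$ produce inequivalent module categories.

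Once the decompositions of $I, B_{f^{(2)}}, B_P, B_Q$ are pinned down uniquely, the remaining entry $A$ in Table~\ref{tab:big_alg_objects} can be handled in the same way, and the candidate algebras of Proposition~\ref{lem:AlgD10} are then assigned to the elements of the Brauer-Picard group via the correspondence $F \mapsto R(F^{-1}(I(\mathbf{1})))$; different simple summands of the same $R(F^{-1}(I(\mathbf{1})))$ must give Morita-equivalent algebras, which provides a further consistency check on the enumeration.
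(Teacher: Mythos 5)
Your proposal is essentially the paper's proof: both reduce the lemma to a finite brute-force check against the candidate list of Proposition~\ref{lem:AlgD10}, and both would verify the multiplicities in the Grothendieck ring by hand. The difference lies in how the search is pruned. The paper's one key simplification is structural: since each of $I, B_{f^{(2)}}, B_P, B_Q$ arises as $R(F^{-1}(I(\mathbf{1})))$, all of its simple algebra summands are internal Ends of simple objects of a \emph{single} module category, hence have equivalent module categories and in particular equal rank; one therefore only needs to test combinations drawn from a single rank class of the table. You instead fix the number of summands by counting the multiplicity of $\mathbf{1}$ and then filter by Frobenius--Perron dimension, relegating the rank/Morita-equivalence fact to a final consistency check. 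Your search space is strictly larger (it admits mixed-rank combinations), which makes the bookkeeping heavier but would, if completed, establish the lemma as literally stated rather than only uniqueness among same-rank decompositions; the paper's pruning is what makes the check comfortably finite by hand. Two small cautions. First, your justification that a simple algebra contains $\mathbf{1}$ with multiplicity one is not correct in general (an indecomposable algebra need not be haploid --- think of a matrix algebra in $\operatorname{Vec}$); what saves the count is that every candidate in Proposition~\ref{lem:AlgD10} is an internal End of a simple module object and so is haploid, as one sees directly from the table, so the number of summands still equals the multiplicity of $\mathbf{1}$. Second, note that the FPdim filter is redundant once you insist on matching every simple multiplicity, so the real content of your argument is the same exhaustive comparison the paper performs; you should be explicit that the enumeration over mixed-rank $4$-tuples and $6$-tuples has actually been carried to completion before claiming uniqueness.
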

\begin{proof}
We brute force check all possible combinations of simple algebra objects in Lemma~\ref{lem:AlgD10} and see that only the above decompositions are possible. To simplify our computations recall that all simple algebra objects in the decomposition of $R(F^{-1}(I(\mathbf{1})))$ will have equivalent module categories. As module categories with different ranks are clearly non-equivalent we can restrict our attention to combinations of algebra objects whose corresponding module categories have the same rank.
\end{proof}
\begin{lemma}\label{lem:uniqueDecomp}
The objects $\mathbf{1} \oplus f^{(2)}$, $\mathbf{1} \oplus P$, and $\mathbf{1} \oplus Q$ have unique algebra object structures. 
\end{lemma}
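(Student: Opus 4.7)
The plan is to prove, for each of the three objects $A = \mathbf{1} \oplus X$ with $X \in \{f^{(2)}, P, Q\}$, that the set of algebra structures on $A$ modulo the action of $\mathrm{Aut}(A)$ is a single point. I would first observe that by Lemma~\ref{lem:aut_D10} the category $\ad(D_{10})$ admits an $S_3$ braided auto-equivalence freely permuting the three simples $f^{(2)}, P, Q$. Since such an auto-equivalence carries algebra structures to algebra structures, the three cases are related by symmetry, so it suffices to prove uniqueness for a single choice, say $A = \mathbf{1} \oplus P$.

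Next I would pin down the unit and the ``boundary'' components of the multiplication. Because $\dim \Hom(\mathbf{1}, A) = 1$, the unit $u\colon \mathbf{1}\to A$ is the canonical inclusion up to a non-zero scalar, which we fix; the unit axioms then force the components of $m\colon A\otimes A\to A$ with a $\mathbf{1}$-factor to be the canonical unitor isomorphisms. Hence the only remaining data is the component $m_{P,P}\colon P\otimes P \to A$. By inspection of the fusion rules of $\ad(D_{10})$, both $\Hom(P\otimes P,\mathbf{1})$ and $\Hom(P\otimes P,P)$ are at most one-dimensional, so $m_{P,P}$ is encoded by at most two scalars $(\alpha,\beta)$. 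Furthermore $\alpha \neq 0$: otherwise $P$ would generate a proper nilpotent ideal of $A$, contradicting that a connected algebra with the given Frobenius--Perron dimension must be semisimple as an abstract algebra.

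Finally I would impose associativity and reduce modulo gauge. After the trivial checks on components involving a unit, associativity reduces to a single identity on the $P\otimes P\otimes P$-component, which expands via the associator of $\ad(D_{10})$ into a polynomial equation $F(\alpha,\beta)=0$. The residual automorphism freedom is $\Aut(A, u)\cong\mathbb{C}^\times$, rescaling the $P$-summand by $\lambda$ and acting as $(\alpha,\beta)\mapsto (\lambda^{-2}\alpha, \lambda^{-1}\beta)$. Showing that the solution set of $F=0$ in $\{\alpha\neq 0\}$ is a single $\mathbb{C}^\times$-orbit then gives uniqueness.

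The main obstacle is computing the associativity equation $F(\alpha,\beta)$ explicitly. In principle the structure constants are available from the planar algebra presentation of $\ad(D_{2N})$ in Section~\ref{sec:prelim}, via the trivalent vertex $T$ and its defining relations, but the calculation is delicate and depends on whether $P\otimes P$ actually contains $P$ as a summand (determining whether $\beta$ is a genuine free parameter or identically zero). An alternative route that sidesteps this computation would be to identify $A$-mod as a specific indecomposable module category over $\ad(D_{10})$ already known to admit a unique internal Hom algebra of the stated form, for instance by realising $A$ as an induction from the trivial algebra along a subcategory inclusion; this would make the uniqueness of algebra structure follow from Ostrik's bijection between pointed module categories and algebras together with the triviality of the relevant automorphism group.
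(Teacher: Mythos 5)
Your setup is the right one, but as written the argument stops exactly where the proof has to happen: you reduce everything to a pair of scalars $(\alpha,\beta)$ on the $P\otimes P$ component, assert that associativity yields one equation $F(\alpha,\beta)=0$, and then say that it remains to show the solution set with $\alpha\neq 0$ is a single $\mathbb{C}^\times$-orbit --- and you explicitly flag that computation as an unresolved obstacle, including uncertainty about whether $\Hom(P\otimes P,P)$ is zero- or one-dimensional. That final step is the entire content of the uniqueness claim, so the proposal has a genuine gap rather than being a complete proof by a different route. (A secondary omission: the lemma also asserts that these objects \emph{admit} algebra structures; the paper gets existence from the decomposition in Lemma~\ref{lem:decomp1}, whereas you only address uniqueness. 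And your justification that $\alpha\neq 0$ is loose --- with $\alpha=0$ the object $\mathbf{1}\oplus P$ still carries a square-zero algebra structure, so some standing hypothesis on the class of algebras considered, e.g.\ that they arise as internal Homs of semisimple module categories, is needed to exclude it.)

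The paper closes this gap by not doing the computation at all: it observes that $\Hom(f^{(2)}\otimes f^{(2)},f^{(2)})$, $\Hom(P\otimes P,P)$ and $\Hom(Q\otimes Q,Q)$ are each one-dimensional and then cites \cite[Lemma 8]{MR1976459}, which is precisely the general statement that an object of the form $\mathbf{1}\oplus X$ with $\dim\Hom(X\otimes X,X)\le 1$ carries at most one algebra structure of the relevant kind. Ostrik's lemma is, in effect, the normalisation-plus-associativity-plus-gauge argument you sketch, carried out once and for all (after rescaling so that the pairing component is $1$, the residual gauge $\lambda=\pm 1$ identifies the two roots of the quadratic that associativity imposes on $\beta$). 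Your closing ``alternative route'' gestures at the right reference but frames it as identifying $A$-mod with a known module category, which is more than is needed; the direct citation suffices. So the fix is simply to replace your unfinished computation with that lemma, after verifying the one-dimensionality of the three Hom spaces from the $\ad(D_{10})$ fusion rules.
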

\begin{proof}
Existence of an algebra object structure follows from Lemma~\ref{lem:decomp1}. As $\Hom(f^{(2)}\otimes f^{(2)},f^{(2)})$, $\Hom(P\otimes P,P)$, and $\Hom(Q\otimes Q,Q)$ are all $1$-dimensional we can apply \cite[Lemma 8]{MR1976459} to get uniqueness.
\end{proof}

\begin{lemma}\label{lem:decomp2}
The algebra object $A$ decomposes in to simple algebra objects as $(\mathbf{1} \oplus  f^{(6)}) \oplus (\mathbf{1} \oplus f^{(2)} \oplus  f^{(6)}\oplus P\oplus Q) \oplus (\mathbf{1} \oplus 2f^{(2)} \oplus 3 f^{(4)}\oplus 4 f^{(6)}\oplus 2P\oplus 2Q)$.
\end{lemma}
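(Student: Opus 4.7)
My plan is to follow exactly the strategy used for Lemma~\ref{lem:decomp1}: reduce the problem to a finite combinatorial search over the simple algebra objects classified in Proposition~\ref{lem:AlgD10}. The key conceptual input is the same as before. Because $A = R(F^{-1}(I(\mathbf{1})))$ for a fixed braided auto-equivalence $F$, and because an isomorphism class of invertible $\ad(D_{10})$-module category is a well-defined invariant of $F$, every simple algebra object appearing in the decomposition must give rise to equivalent module categories, and in particular to module categories of the same rank. This immediately restricts the search to one of the three columns of Proposition~\ref{lem:AlgD10}.

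To isolate the relevant column I would first pin down the rank. One clean way is to note that $A$ corresponds (via the columns of the table) to one of the order-$3$ braided auto-equivalences, and compare $\operatorname{FPdim}(A)$ against the total Frobenius-Perron dimensions of the candidate algebras in each column: the rank-$6$ algebras are ruled out because they would force a summand of multiplicity greater than one of $\mathbf{1}$ coming from algebras containing $2f^{(4)}$, and the rank-$4$ algebras are ruled out because every rank-$4$ candidate contains $f^{(2)}$ with coefficient at least one accompanied by imbalances between $P$ and $Q$ that cannot be made to match the perfectly $P$/$Q$-symmetric multiplicities of $A$. So the decomposition must be as a sum of rank-$3$ algebras.

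Once reduced to the three rank-$3$ candidates
\begin{align*}
X_1 &= \mathbf{1} \oplus f^{(6)},\\
X_2 &= \mathbf{1} \oplus f^{(2)} \oplus f^{(6)} \oplus P \oplus Q,\\
X_3 &= \mathbf{1} \oplus 2f^{(2)} \oplus 3f^{(4)} \oplus 4f^{(6)} \oplus 2P \oplus 2Q,
\end{align*}
it remains to solve the linear system $aX_1 + bX_2 + cX_3 = A$ for non-negative integer multiplicities. The coefficient of $f^{(4)}$ forces $c = 1$; the coefficient of $f^{(2)}$ then forces $b = 1$; and the coefficient of $\mathbf{1}$ forces $a = 1$. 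A quick check against the remaining multiplicities of $f^{(6)}$, $P$ and $Q$ confirms consistency, and gives the claimed decomposition. Existence of algebra structures on $X_1$, $X_2$, $X_3$ follows because each of them appears as a summand of some $R(F^{-1}(I(\mathbf{1})))$ from the table (the order-$3$ auto-equivalences), so they inherit algebra structures from the induced object.

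The only real obstacle is the first step, confirming that we are indeed in the rank-$3$ column rather than rank-$4$ or rank-$6$; I expect to do this by the dimension/symmetry argument sketched above, or, failing that, by running the brute-force enumeration in each of the three columns and observing that rank-$4$ and rank-$6$ admit no decomposition of $A$. The rest of the argument is just bookkeeping.
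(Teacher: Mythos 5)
There is a genuine gap, and it sits exactly where you flagged your own uncertainty: the elimination of the rank-$4$ column. Your proposed symmetry argument does not work, because the rank-$4$ candidate $\mathbf{1}\oplus f^{(2)}\oplus f^{(4)}\oplus 2f^{(6)}\oplus P\oplus Q$ has multiplicity vector $(1,1,1,2,1,1)$, which is perfectly $P$/$Q$-symmetric, and three copies of it sum to exactly $A = 3\mathbf{1}\oplus 3f^{(2)}\oplus 3f^{(4)}\oplus 6f^{(6)}\oplus 3P\oplus 3Q$. So your fallback — ``running the brute-force enumeration \dots and observing that rank-$4$ and rank-$6$ admit no decomposition of $A$'' — would also fail: the brute force leaves \emph{two} surviving candidates, the claimed rank-$3$ decomposition and $3\times(\mathbf{1}\oplus f^{(2)}\oplus f^{(4)}\oplus 2f^{(6)}\oplus P\oplus Q)$. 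Unlike Lemma~\ref{lem:decomp1}, the multiplicity bookkeeping here is genuinely not decisive, and this is the entire difficulty of the lemma.

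The paper rules out the rank-$4$ alternative by a non-combinatorial argument: if it held, the invertible bimodule attached to $A$ would be $(\mathbf{1}\oplus f^{(2)}\oplus f^{(4)}\oplus 2f^{(6)}\oplus P\oplus Q)$-mod; inspecting its module fusion graph, one object has dimension $\sqrt{[3]+1}$, so its internal hom is an algebra of dimension $[3]+1$, forcing this module category to coincide with $(\mathbf{1}\oplus f^{(2)})$-mod, $(\mathbf{1}\oplus P)$-mod, or $(\mathbf{1}\oplus Q)$-mod. But Lemma~\ref{lem:uniqueDecomp} gives a \emph{unique} algebra structure on each of these objects, and the six occurrences of $B_{f^{(2)}}$ (resp.\ $B_P$, $B_Q$) in the table already exhaust, up to the order-$6$ group $\Out_\otimes(\ad(D_{10}))$, the invertible bimodules with that underlying algebra; the bimodule coming from $A$ would then require a second, distinct algebra structure on $\mathbf{1}\oplus f^{(2)}$, a contradiction. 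Your proposal contains none of this, so as written it does not establish the lemma. (Your remarks about rank-$6$ being impossible and about the linear system within the rank-$3$ column forcing $a=b=c=1$ are correct, as is the observation that all summands must yield module categories of equal rank.)
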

\begin{proof}
Using the same technique as in Lemma~\ref{lem:decomp1} we can show that $A$ either decomposes as 

\[(\mathbf{1} \oplus  f^{(6)}) \oplus (\mathbf{1} \oplus f^{(2)} \oplus  f^{(6)}\oplus P\oplus Q) \oplus (\mathbf{1} \oplus 2f^{(2)} \oplus 3 f^{(4)}\oplus 4 f^{(6)}\oplus 2P\oplus 2Q),\]
 or 
\[3\times (\mathbf{1}\oplus f^{(2)}\oplus  f^{(4)}\oplus 2 f^{(6)} \oplus P \oplus Q).\] We will rule out the possibility of the latter decomposition.

Suppose such a decomposition did exist, then this would imply that the corresponding bimodule category would have underlying algebra object $\mathbf{1}\oplus f^{(2)}\oplus  f^{(4)}\oplus 2 f^{(6)} \oplus P \oplus Q$, and hence is equivalent to $\mathbf{1}\oplus f^{(2)}\oplus  f^{(4)}\oplus 2 f^{(6)} \oplus P \oplus Q$-mod as a left module category. The category $\mathbf{1}\oplus f^{(2)}\oplus  f^{(4)}\oplus 2 f^{(6)} \oplus P \oplus Q$-mod has module fusion graph:

\begin{center}\begin{tikzpicture}[baseline={([yshift=-.5ex]current bounding box.center)}]
        \vertex[label=$$](1) at (0,0) {};
        \vertex[label=$$](2) at (2,0) {};
        \vertex[label=$$](3) at (4,0) {};
\vertex[label=$$](4) at (4,1) {};
\vertex[label=$$](5) at (4,2) {};
\vertex[label=$$](6) at (4,-1) {};
\vertex[label=$$](7) at (4,-2) {};
         \vertex[label=$$](8) at (6,0) {};
         \vertex[label=$$](9) at (8,0) {}; 
         \vertex[label=$$](10) at (10,0) {};
        \Edge(1)(2)
 \Edge[style = {bend left = 5}](2)(5)
 \Edge[style = {bend right = 5}](2)(5)
 \Edge(2)(4)
\Edge(2)(3)
\Edge(2)(6)
\Edge(2)(7)
 \Edge[style = {bend right = 5}](8)(5)
 \Edge[style = {bend left = 5}](8)(5)
 \Edge(8)(4)
\Edge(8)(3)
\Edge(8)(6)
 \Edge[style = {bend right = 5}](8)(7)
 \Edge[style = {bend left = 5}](8)(7)
\Edge(4)(9)
\Edge[style = {bend left=10}](3)(9)
\Edge(6)(9)
\Edge(5)(10)
\Edge(7)(10)
\Edge(5)(9)
\Edge(7)(9)
       
    \end{tikzpicture}\end{center}

The dimension of object at the far right we compute to be $\sqrt{[3]+1}$. Thus the internal hom of this object is an algebra object in $\ad(D_{10})$ with dimension $[3]+1$, and so must be one of $\mathbf{1} \oplus f^{(2)}$, $\mathbf{1} \oplus P$, or $\mathbf{1} \oplus Q$. This implies $\mathbf{1}\oplus f^{(2)}\oplus  f^{(4)}\oplus 2 f^{(6)} \oplus P \oplus Q$-mod is equivalent to one of $\mathbf{1} \oplus f^{(2)}$-mod, $\mathbf{1} \oplus P$-mod, or $\mathbf{1} \oplus Q$-mod. 

Assume without loss of generality that $\mathbf{1}\oplus f^{(2)}\oplus  f^{(4)}\oplus 2 f^{(6)} \oplus P \oplus Q $-mod is equivalent to $\mathbf{1} \oplus f^{(2)}$-mod, as the following argument works with $f^{(2)}$ replaced with $P$ or $Q$. Lemma~\ref{lem:uniqueDecomp} shows us that there exists a unique algebra object structure on $\mathbf{1} \oplus f^{(2)}$, and thus up to action of tensor auto-equivalences there is a unique invertible bimodule with underlying algebra object $\mathbf{1} \oplus f^{(2)}$. However Lemma~\ref{lem:decomp1} tells us that the underlying algebra object of any of the invertible bimodules corresponding to $B_{f^{(2)}}$ must be $\mathbf{1} \oplus f^{(2)}$. As $B_{f^{(2)}}$ appears $6$ times in Table~\ref{tab:big_alg_objects} it follows that the algebra object structure on $\mathbf{1} \oplus f^{(2)}$ coming from $A$ is different than the one coming from $B_{f^{(2)}}$. Thus there are two algebra object structures on $\mathbf{1} \oplus f^{(2)}$, but this is a contradiction to Lemma~\ref{lem:uniqueDecomp}. Therefore $A$ cannot decompose as $3\times (\mathbf{1}\oplus f^{(2)}\oplus  f^{(4)}\oplus 2 f^{(6)} \oplus P \oplus Q)$.
 \end{proof}
 
 \begin{cor}\label{lem:Zalg}
 There exists two algebra object structures on $\mathbf{1} \oplus  f^{(6)}$ such that $(\mathbf{1} \oplus  f^{(6)})-\operatorname{bimod}  \simeq \ad(D_{10})$.
 \end{cor}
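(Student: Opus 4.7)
The existence of at least one such algebra structure is immediate from Lemma~\ref{lem:decomp2}: since $\mathbf{1}\oplus f^{(6)}$ appears as a simple algebra summand of $A = R(F^{-1}(I(\mathbf{1})))$ for some braided auto-equivalence $F$ of $Z(\ad(D_{10}))$, the algebra structure on $A$ restricts to one on $\mathbf{1}\oplus f^{(6)}$, and the Etingof--Nikshych--Ostrik construction reviewed at the start of Section~\ref{sec:D10} guarantees that the associated bimodule category is equivalent to $\ad(D_{10})$.

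To exhibit a second, non-equivalent structure, the plan is to exploit the fact that $A$ appears in Table~\ref{tab:big_alg_objects} exactly $12$ times, corresponding via $\BrPic(\ad(D_{10}))\cong \BrAut(Z(\ad(D_{10})))$ to $12$ distinct invertible bimodules. I first want to pin down $\Out_\otimes(\ad(D_{10}))$. By Lemma~\ref{lem:aut_D10}, $\BrAut(\ad(D_{10})) = S_3 \subseteq \Aut_\otimes(\ad(D_{10}))$, while Lemma~\ref{lem:PA_Autos_of_1/2D_{2n}} tells us $\ad(D_{10})$ has no non-trivial gauge auto-equivalences; hence $\Aut_\otimes(\ad(D_{10}))$ injects into the fusion-ring automorphism group of $\ad(D_{10})$, which is itself $S_3$ (permuting $f^{(2)}, P, Q$). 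This forces $\Aut_\otimes(\ad(D_{10})) = S_3$, and since $\mathbf{1}$ is the only invertible object we also get $\Out_\otimes(\ad(D_{10})) = S_3$.

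The free action of $\Out_\otimes(\ad(D_{10}))$ on $\BrPic(\ad(D_{10}))$ therefore has orbits of size exactly $6$, and the $12$ appearances of $A$ split into precisely two orbits. Bimodules in different orbits project to inequivalent left $\ad(D_{10})$-module categories, hence correspond to non-Morita-equivalent simple algebras. Picking $\mathbf{1}\oplus f^{(6)}$ as the simple algebra representative from each orbit — which is possible because Lemma~\ref{lem:decomp2} ensures $\mathbf{1}\oplus f^{(6)}$ appears uniquely as a summand in every decomposition of $A$ — yields two distinct, non-isomorphic algebra structures on $\mathbf{1}\oplus f^{(6)}$, each with bimodule category equivalent to $\ad(D_{10})$.

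The main obstacle is nailing down $\Out_\otimes(\ad(D_{10}))$: were it strictly larger than $S_3$, the free action could amalgamate the $12$ bimodules into a single orbit and we would only extract one algebra structure. The gauge-rigidity of $\ad(D_{10})$ supplied by Lemma~\ref{lem:PA_Autos_of_1/2D_{2n}} is exactly what prevents this, and once the coincidence $\Out_\otimes = \BrAut = S_3$ is secured the count $12/6 = 2$ finishes the argument.
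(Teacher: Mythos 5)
Your proposal is correct and follows essentially the same route as the paper: count the $12$ entries of Table~\ref{tab:big_alg_objects} equal to $A$, use Lemma~\ref{lem:decomp2} to identify $\mathbf{1}\oplus f^{(6)}$ as the simple algebra summand, and divide by the free action of $\Out_\otimes(\ad(D_{10}))\cong S_3$ to obtain $12/6=2$ inequivalent left module categories and hence two algebra structures. The only difference is that you spell out why $\Out_\otimes(\ad(D_{10}))=S_3$ (gauge rigidity plus the fusion-ring bound), which the paper simply cites from Lemma~\ref{lem:aut_D10}; this is a welcome clarification but not a different argument.
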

 \begin{proof}
The above table and Lemma~\ref{lem:decomp2} shows that there are exactly $12$ invertible bimodule categories over $\ad(D_{10})$ whose underlying algebra object is $\mathbf{1} \oplus f^{(6)}$. As $\Out_\otimes(\ad(D_{10})) = S_3$ has order 6 (Lemma~\ref{lem:aut_D10}), up to the action of outer tensor auto-equivalences there are two different left $\ad(D_{10})$ modules with dual equivalent to $\ad(D_{10})$, and hence two algebra object structures on $\mathbf{1} \oplus  f^{(6)}$ such that $(\mathbf{1} \oplus  f^{(6)})$-bimod $ \simeq \ad(D_{10})$.
 \end{proof}

The algebra objects $\mathbf{1} \oplus  f^{(6)}$ give rise to subfactors when $\ad(D_{10})$ is unitary. These are the GHJ subfactors corresponding to the odd part of $E_7$ as a module over $D_{10}$.
\begin{cor}
There exist two inequivalent subfactors of index $1+ \cos(\frac{\pi}{9})\csc(\frac{\pi}{18}) \approxeq 6.411$ whose even and dual even parts are $\ad(D_{10})$, and with principle graph
\begin{center}
\begin{tikzpicture}[scale = .707,baseline={([yshift=-.5ex]current bounding box.center)}]
        \vertex (1) at (0,0) {};
        \vertex (2) at (2,0) {};
\vertex (3) at (4,0) {};
\vertex (4) at (5.414,1.414) {};
\vertex (5) at (5.414,-1.414) {};
\vertex (6) at (6.818,0) {};
\vertex (7) at (5.414,-3.414) {};
\vertex (8) at (4,-2.828) {};
\vertex (9) at (6.818,-2.828) {};

       \Edge(1)(2)
\Edge(2)(3)
\Edge(3)(4)
\Edge(3)(5)
\Edge(4)(6)
\Edge(6)(5)
\Edge(5)(7)
\Edge(5)(8)
\Edge(5)(9)

    \end{tikzpicture}\end{center}
\end{cor}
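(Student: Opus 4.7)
The plan is to upgrade the two algebra structures produced in Corollary~\ref{lem:Zalg} to two subfactors, compute the index and principal graph in parallel, and finally verify inequivalence. Fix a choice of $q$ so that $\ad(D_{10})$ is unitary, namely $q = e^{i\pi/18}$, so that $[n+1]_q = \sin((n+1)\pi/18)/\sin(\pi/18)$ is positive for the simples $f^{(n)}$ appearing in the principal graph. The general recipe I will invoke is: if $A$ is a connected $Q$-system in a unitary fusion category $C$ whose category of bimodules satisfies $A\text{-bimod}_C \simeq C$, then the inclusion $A\text{-mod}_C \hookrightarrow C$ (viewing $C$ as a right $C$-module in the usual way and taking internal endomorphism algebras) gives a finite-index, irreducible, hyperfinite subfactor $N \subset M$ with $N'\cap M_1 \simeq C$, $M'\cap M_2 \simeq A\text{-bimod}_C \simeq C$, and Jones index equal to $\FPdim(A)^2/\FPdim(A) = \FPdim(A)$.

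First I would verify that each of the two algebra structures on $A = \mathbf{1} \oplus f^{(6)}$ from Corollary~\ref{lem:Zalg} is actually a unitary $Q$-system. This is essentially automatic: both structures live in a unitary pivotal category, and since $\Hom(A\otimes A, A)$ is one--dimensional, the algebra structure is rigid enough that up to rescaling the multiplication it can be chosen so the standard solutions of the conjugate equations realize it as a $C^{*}$-Frobenius algebra; uniqueness of the unit (and the one--dimensionality of the relevant hom spaces coming from the $\ad(D_{10})$ fusion rules) force the compatibility of the $\ast$-structure with the multiplication. The resulting unitary $Q$-systems yield subfactors by the standard Longo construction.

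Next I would compute the index. By the recipe above, the index equals $\FPdim(\mathbf{1}\oplus f^{(6)}) = 1 + [7]_q$ with $q = e^{i\pi/18}$, and a direct trigonometric identity gives
\[
1 + [7]_q \;=\; 1 + \frac{\sin(7\pi/18)}{\sin(\pi/18)} \;=\; 1 + \frac{\cos(\pi/9)}{\sin(\pi/18)} \;=\; 1 + \cos(\tfrac{\pi}{9})\csc(\tfrac{\pi}{18}),
\]
matching the stated value $\approx 6.411$.

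Then I would compute the principal graph, which is the bipartite fusion graph for the action of $A$, or equivalently the induction/restriction graph for $C \leftrightarrows A\text{-mod}_C$. Concretely, the odd vertices are the six simple objects $\mathbf{1}, f^{(2)}, f^{(4)}, f^{(6)}, P, Q$ of $\ad(D_{10})$; the even vertices are the simples of $A\text{-mod}_C$, and edges from $X$ to a simple $A$-module $M$ are counted by $\dim\Hom_C(X, F(M))$ where $F$ is the forgetful functor. Using the known $\ad(D_{10})$ fusion rules (recorded in the fusion graph in Section~\ref{sec:prelim} together with $f^{(6)}\otimes f^{(6)}$, $f^{(6)}\otimes P$, etc.), one decomposes each $X\otimes A$ into free $A$-modules and reads off the principal graph. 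I would expect (and would verify by a dimension count, summing squared dimensions against $\FPdim(A\text{-mod}_C) = \FPdim(\ad(D_{10}))/\FPdim(A)$) that the resulting bipartite graph is exactly the depicted one with nine vertices.

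Finally I would argue inequivalence of the two subfactors. By Corollary~\ref{lem:Zalg}, the two $Q$-system structures on $\mathbf{1}\oplus f^{(6)}$ correspond to two distinct $\ad(D_{10})$-module categories $\mathcal{M}_1,\mathcal{M}_2$ with $\operatorname{Fun}_C(\mathcal{M}_i,\mathcal{M}_i)\simeq \ad(D_{10})$, and these two module categories are not related by an outer auto-equivalence of $\ad(D_{10})$ (this is precisely how they are counted in the proof of that corollary, using that $|\Out_\otimes(\ad(D_{10}))|=6$ and that exactly twelve invertible bimodules in Table~\ref{tab:big_alg_objects} carry underlying algebra $\mathbf{1}\oplus f^{(6)}$). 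Since equivalent subfactors produce equivalent associated module categories, this distinguishes the subfactors. The main obstacle I anticipate is the principal-graph computation: one must carefully track how $f^{(6)}$ acts on each simple $A$-module — in particular, resolving how $P$ and $Q$ restrict and whether $f^{(6)}\otimes P$ and $f^{(6)}\otimes Q$ create two distinct rank-one $A$-modules (producing the two antenna vertices in the graph) — but everything is determined by the $\ad(D_{10})$ fusion ring and the unique algebra structure on $A$ up to automorphism of $A$, so the computation is mechanical.
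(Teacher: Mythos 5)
Your proposal follows essentially the same route as the paper: the paper derives this corollary directly from the preceding corollary (two algebra structures on $\mathbf{1}\oplus f^{(6)}$ with $(\mathbf{1}\oplus f^{(6)})\text{-bimod}\simeq \ad(D_{10})$), together with the observation that in the unitary setting these algebras are Q-systems giving the GHJ subfactors for the odd half of $E_7$; the index $1+[7]_q$ and the $6+3$-vertex induction/restriction graph are then read off exactly as you describe, and inequivalence follows because the two module categories are distinct even after quotienting by $\Out_\otimes(\ad(D_{10}))$.

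One local error should be fixed: your claim that $\Hom(A\otimes A,A)$ is one-dimensional for $A=\mathbf{1}\oplus f^{(6)}$ is false. Since $f^{(6)}$ is self-dual and $f^{(6)}\otimes f^{(6)}$ contains both $\mathbf{1}$ and $f^{(6)}$, this hom space has dimension at least $5$; indeed, were it one-dimensional in the sense needed for Ostrik's uniqueness lemma, there could not be two inequivalent algebra structures on $\mathbf{1}\oplus f^{(6)}$, contradicting the very corollary you are building on. The conclusion you want (that each of the two algebra structures can be unitarized to a $C^{*}$-Frobenius algebra, i.e.\ a Q-system) is still true, but it should be justified by the general fact that connected separable algebra objects in a unitary fusion category admit (essentially unique) unitary Q-system structures, not by a multiplicity-one argument. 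The rest of the outline --- index via $[7]_q=\cos(\tfrac{\pi}{9})\csc(\tfrac{\pi}{18})$, principal graph as the bipartite restriction graph between the six simples of $\ad(D_{10})$ and the three simple $A$-modules, and inequivalence from the count of twelve invertible bimodules against $|\Out_\otimes(\ad(D_{10}))|=6$ --- is sound and matches the paper.
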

Summarising the above discussion, we have:
 \begin{theorem}
 Up to action of outer tensor auto-equivalences there are six invertible bimodules over $\ad(D_{10})$. These invertible bimodules come from the algebra objects $\mathbf{1}$, $\mathbf{1} \oplus f^{(2)}$, $\mathbf{1} \oplus P$, and $\mathbf{1} \oplus Q$ each of which have a unique algebra object structure, and $\mathbf{1} \oplus f^{(6)}$ which has two algebra object structures.
 \end{theorem}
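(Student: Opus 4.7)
The plan is to derive the statement as a counting corollary of the machinery already built up in this section. First I would observe that by Lemma~\ref{lem:aut_D10} together with the isomorphism~\eqref{eq:iso}, $|\BrPic(\ad(D_{10}))| = |S_3 \times S_3| = 36$. Since the precomposition action of $\Out_\otimes(\ad(D_{10})) \cong S_3$ on $\BrPic$ is free, there are exactly $36/6 = 6$ orbits, and the task reduces to exhibiting six algebra structures accounting for them.

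To produce these, I would chase the isomorphism $\BrAut(Z(\ad(D_{10}))) \to \BrPic(\ad(D_{10}))$ via $F \mapsto R(F^{-1}(I(\mathbf{1})))$. The table preceding Lemma~\ref{lem:decomp1} enumerates the resulting $36$ objects as one of five types $I, A, B_{f^{(2)}}, B_P, B_Q$, with multiplicities $6, 12, 6, 6, 6$ respectively. Since all simple algebra summands of $R(F^{-1}(I(\mathbf{1})))$ yield equivalent module categories, each entry corresponds to a single invertible bimodule. From the decompositions in Lemmas~\ref{lem:decomp1} and~\ref{lem:decomp2} I would read off a canonical simple algebra summand in each type: $\mathbf{1}$ in $I$; $\mathbf{1} \oplus f^{(2)}$, $\mathbf{1} \oplus P$, $\mathbf{1} \oplus Q$ in the three $B$'s; and $\mathbf{1} \oplus f^{(6)}$ in $A$. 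Lemma~\ref{lem:uniqueDecomp} (via the one-dimensional fusion spaces $\Hom(X \otimes X, X)$ combined with \cite[Lemma 8]{MR1976459}) then gives unique algebra structures on the three rank-$3$ algebras, with $\mathbf{1}$ trivially unique, contributing $6/6 = 1$ orbit each. The remaining $12/6 = 2$ orbits must all have underlying algebra $\mathbf{1} \oplus f^{(6)}$, forcing exactly two inequivalent algebra structures, which is Corollary~\ref{lem:Zalg}; the final count $1+1+1+1+2 = 6$ matches the orbit bound.

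The hard part is concentrated in Lemma~\ref{lem:decomp2} and its corollary: I must rule out the alternate decomposition $A = 3(\mathbf{1} \oplus f^{(2)} \oplus f^{(4)} \oplus 2f^{(6)} \oplus P \oplus Q)$, since such a decomposition would spawn twelve extra invertible bimodules with a rank-$6$ underlying algebra and thereby falsify the orbit count. My approach would be to compute the module fusion graph of $(\mathbf{1} \oplus f^{(2)} \oplus f^{(4)} \oplus 2f^{(6)} \oplus P \oplus Q)$-mod, identify a distinguished simple object whose internal $\Hom$ would have to be one of the rank-$3$ algebras $\mathbf{1} \oplus f^{(2)}, \mathbf{1} \oplus P, \mathbf{1} \oplus Q$, and derive a contradiction from the uniqueness of those algebra structures (which have already been consumed in fixing the $B$-type orbits). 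Once this obstruction is dispatched, the theorem falls out as the summary statement of Lemmas~\ref{lem:decomp1}, \ref{lem:uniqueDecomp}, \ref{lem:decomp2} and Corollary~\ref{lem:Zalg}.
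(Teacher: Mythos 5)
Your proposal is correct and follows essentially the same route as the paper: the theorem there is likewise a summary of the table of $R(F^{-1}(I(\mathbf{1})))$, the decomposition Lemmas~\ref{lem:decomp1} and~\ref{lem:decomp2}, the uniqueness Lemma~\ref{lem:uniqueDecomp}, and Corollary~\ref{lem:Zalg}, with the freeness of the $\Out_\otimes(\ad(D_{10}))\cong S_3$ action giving the count of six orbits. You correctly isolate the only genuinely delicate step (excluding the decomposition $A = 3(\mathbf{1}\oplus f^{(2)}\oplus f^{(4)}\oplus 2f^{(6)}\oplus P\oplus Q)$ via the module fusion graph, an internal hom of dimension $[3]+1$, and the already-established uniqueness of the algebra structures on $\mathbf{1}\oplus f^{(2)}$, $\mathbf{1}\oplus P$, $\mathbf{1}\oplus Q$), which is exactly the paper's argument; the only quibble is that those three algebras correspond to rank-$4$, not rank-$3$, module categories.
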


 \subsection{Bimodules over $\ad(A_7)$}
 \setcounter{MaxMatrixCols}{20}

 Recall that the group of braided auto-equivalences of $Z(\ad(A_7))$ is $D_{2\cdot 4}$ with generators $r$ and $s$ as described in Lemma~\ref{lem:centre_AN_autos}. The same calculations as for $D_{10}$ reveal the group structure on the invertible bimodules of $\ad(A_7)$. The induction matrix \[
\begin{bmatrix}
    1&0&0&0&1&0&0&0&1&0&0&1&0&0\\
    0&1&0&0&1&1&0&1&1&1&1&0&1&0\\
    0&0&1&0&0&1&1&0&1&1&1&1&0&1\\
    0&0&0&1&0&0&1&0&0&1&0&0&1&0
\end{bmatrix}
\]
for this computation was computed from the induction matrix for the modular category $A_7$. The ordering of the rows is the standard ordering of the simples objects of $\ad(A_7)$. The ordering of the columns is as follows:

\begin{align*} \mathbf{1}\boxtimes \mathbf{1}, f^{(2)}\boxtimes \mathbf{1},f^{(4)}\boxtimes \mathbf{1},f^{(6)}\boxtimes \mathbf{1},&f^{(1)}\boxtimes f^{(1)},f^{(3)}\boxtimes f^{(1)},f^{(5)}\boxtimes f^{(1)}, \mathbf{1}\boxtimes f^{(2)},f^{(2)}\boxtimes f^{(2)},f^{(4)}\boxtimes f^{(2)},f^{(1)}\boxtimes f^{(3)},\\ &\frac{f^{(3)}\boxtimes f^{(3)}+S}{2},\frac{f^{(3)}\boxtimes f^{(3)}-S}{2},\mathbf{1} \boxtimes f^{(4)}. 
\end{align*}
With this data we compute the group structure as follows:
 
  \begin{table}[h!]

    \centering

    \begin{tabular}{c|cccccccc}
    \toprule
                   $\BrAut(Z(\ad(A_7)))$           & $e$ & $r$ & $r^2$ & $r^3$ & $s$ & $rs$ & $r^2s$ & $r^3s$ \\
	\midrule
		$A$	   &$\mathbf{1}$    & $\mathbf{1} \oplus f^{(2)}$      &  $\mathbf{1} \oplus f^{(6)}$   &   $\mathbf{1} \oplus f^{(4)}$   & $\mathbf{1} \oplus f^{(2)}$   &    $\mathbf{1}$    & $\mathbf{1} \oplus f^{(4)}$ &   $\mathbf{1} \oplus f^{(6)}$    \\                	                 
    	\bottomrule
    \end{tabular}
\end{table}

\appendix

\section{Planar algebra natural isomorphisms}
In \cite{1607.06041} the authors show an equivalence between the category of planar algebras and the category of based pivotal categories (pivotal categories with a chosen self-dual generating object). However their definition of the latter category assumes that the natural transformations between pivotal functors must have trivial component on the chosen generating object. This assumption turns out to be too restrictive as there exist planar algebras whose automorphism group is non-trivial, yet whose associated based pivotal category has trivial based auto-equivalence group, e.g. $\ad(A_{2N})$. In this appendix we define natural isomorphisms between planar algebra homomorphisms. We then show that the group of planar algebra automorphisms, and the group of based auto-equivalences of the associated pivotal category are isomorphic (when both groups are considered up to natural isomorphism).

Let's begin by looking at natural transformations in the category of based pivotal categories. Let $F,G: (C,X) \to (D,Y)$ be two based pivotal functors, and $\eta: F \to G$ a natural transformation between them. In \cite{1607.06041} it is shown that $\eta$ is entirely determined by its component on $Y$, that is a morphism in $\Hom_D(Y,Y)$. Consider the duality map $\epsilon_Y: Y \otimes Y \to \mathbf{1}$. As $F$ and $G$ are pivotal functors they fix this duality map, and therefore the following diagram commutes:
$$\begin{CD}
Y\otimes Y @>\epsilon_Y >> \mathbf{1} \\
@VV{\eta_Y \otimes \eta_Y}V @VV{\text{id}_1}V\\
Y\otimes Y @>\epsilon_Y>>  \mathbf{1}.
\end{CD}$$
Therefore $\eta_Y\circ (\eta_Y)^* = \text{id}_Y$. In particular this implies that every natural transformation between based pivotal functors is in fact an isomorphism.

Recall that one gets a planar algebra from a pointed pivotal category $(D,Y)$ by taking $P_{N} := \Hom_D(\mathbf{1} \to Y^{\otimes N})$. We wish to carry the notion of natural isomorphism across this construction. As a natural isomorphism between based pivotal functors is determined by its component on $Y$, i.e an element of $\Hom_D( Y,Y)$, it follows that a natural isomorphism of planar algebra homomorphisms should be determined by an element of $P_2$. By pushing the naturality condition of a natural transformation of functors through the planar algebra construction we arrive at the following definition.

\begin{dfn}
Let $P$ and $Q$ be planar algebras, and $\phi,\psi : Q \to P$ planar algebra homomorphisms. A natural isomorphism $\phi \to \psi$ is an element   \begin{tikzpicture}[baseline={([yshift=-.5ex]current bounding box.center)}]
\node [thick, rectangle, draw] at (0,0) (Z1) {$\eta$};
\draw [thick] (0,-0.25) --  (0,-.5);
\draw [thick] (0,0.25) -- (0,.5);
\end{tikzpicture} $ \in Q_{2}$ satisfying for any element   \begin{tikzpicture}[baseline={([yshift=-.5ex]current bounding box.center)}]
\node [thick, rectangle, draw,minimum size = 29] at (0,0) (Z1) {$f$};
\draw [thick] (0,0.5) -- (0,.75);
\draw [thick] (0.3,0.5) -- (0.3,.75);
\draw [thick] (-0.3,0.5) -- (-.3,.75);
\node at (0,1) {$n$};
\end{tikzpicture} $ \in P_{n}$ the naturality condition
\[
\begin{tikzpicture}[baseline={([yshift=-.5ex]current bounding box.center)}]
\node [thick, rectangle, draw,minimum size = 45] at (0,0) (Z1) {$\phi(f)$};
\draw [thick] (0,0.78) -- (0,1);
\draw [thick] (.6,0.78) -- (0.6,1);
\draw [thick] (-.6,0.78) -- (-0.6,1);

\node [thick, rectangle, draw] at (0,1.25) (Z1) {$\eta$};
\node [thick, rectangle, draw] at (0.6,1.25) (Z1) {$\eta$};
\node [thick, rectangle, draw] at (-0.6,1.25) (Z1) {$\eta$};

\draw [thick] (0,1.75) -- (0,1.5);
\draw [thick] (.6,1.75) -- (0.6,1.5);
\draw [thick] (-.6,1.75) -- (-0.6,1.5);
\end{tikzpicture} \quad = \quad \begin{tikzpicture}[baseline={([yshift=-.5ex]current bounding box.center)}]
\node [thick, rectangle, draw,minimum size = 45] at (0,0) (Z1) {$\psi(f)$};
\draw [thick] (0,0.78) -- (0,1.25);
\draw [thick] (.6,0.78) -- (0.6,1.25);
\draw [thick] (-.6,0.78) -- (-0.6,1.25);



\end{tikzpicture}\]
\end{dfn}

If the planar algebra is irreducible, i.e $P_2 \cong \mathbb{C}$, this implies that the only possible natural isomorphisms are $\pm \text{id}_1$. For the general case the natural isomorphisms live in $\{ u \in P_2 : uu^* = \text{id}_1 \}$, the orthogonal group of $P_2$.

If we consider the group of planar algebra isomorphisms up to natural isomorphism we get the following lemma.

\begin{lemma}
Up to natural isomorphism, the groups $\Aut(P)$ and $\Aut^\text{piv}(C_P;X)$ are isomorphic.
\end{lemma}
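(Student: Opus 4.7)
The strategy is to upgrade the set-theoretic bijection from \cite[Theorem 2.4]{1607.06041} (which identifies $\Aut(P)$ with the group of based pivotal auto-equivalences of $C_P$ that fix $X$ on the nose, up to \emph{strict} equality) to an isomorphism on the quotients by natural isomorphism. Since the underlying bijection $\Phi : \Aut(P) \to \Aut^{\text{piv,strict}}(C_P; X)$ is already a group isomorphism, it suffices to show that $\Phi$ and $\Phi^{-1}$ each carry the relation of natural isomorphism on one side to the relation of natural isomorphism on the other. The resulting induced map on quotients will then automatically be a group isomorphism, as the quotient group structures are inherited from composition on both sides.

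\textbf{From planar algebra natural isomorphisms to pivotal ones.} Suppose $\phi, \psi \in \Aut(P)$ are related by a planar algebra natural isomorphism $\eta \in P_2$ as in the definition. I would construct a pivotal natural isomorphism $\tilde\eta : F_\phi \to F_\psi$ between the corresponding based pivotal functors. For an object of $C_P$, i.e.\ a projection $\pi \in P_{2n}$, define the component
\[
\tilde\eta_\pi \;:=\; \psi(\pi)\cdot \eta^{\otimes n}\cdot \phi(\pi) \;\in\; \psi(\pi)\,P_{2n}\,\phi(\pi) \;=\; \Hom_{C_P}(F_\phi(\pi), F_\psi(\pi)),
\]
where $\eta^{\otimes n}$ means $n$ horizontally juxtaposed copies of the box $\eta$. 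Naturality of $\tilde\eta$ with respect to a morphism $f \in \phi(\pi_1)\,P_{n_1+n_2}\,\phi(\pi_2)$ is then the special case of the planar algebra naturality condition where one sandwiches $f$ by the appropriate projections; the monoidality of $\tilde\eta$ follows from horizontal juxtaposition of the defining boxes; and the identity $\tilde\eta_\pi \circ \tilde\eta_\pi^* = \id$ required for compatibility with the pivotal structure comes from applying the naturality condition to the duality cap $\varepsilon_X \in P_2$, which both $\phi$ and $\psi$ fix as it belongs to the planar operad.

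\textbf{From pivotal natural isomorphisms to planar algebra ones.} Conversely, given a natural isomorphism $\tau : F_\phi \to F_\psi$ of based pivotal functors, extract its component $\eta := \tau_X \in \Hom_{C_P}(X,X) = P_2$. The argument sketched in the appendix, using that both $F_\phi$ and $F_\psi$ preserve the duality morphism $\varepsilon_X$, shows that $\eta\eta^* = \id_1$, so $\eta$ lies in the orthogonal group of $P_2$. To verify the planar algebra naturality condition for an arbitrary $f \in P_n$, I would view $f$ as a morphism $\mathbf{1} \to X^{\otimes n}$ in $C_P$ (equivalently, sandwich it by the identity projections $\phi(\id_{X^{\otimes n}}) = \psi(\id_{X^{\otimes n}})$) and invoke the naturality square of $\tau$ at this morphism, together with the monoidality identity $\tau_{X^{\otimes n}} = \tau_X^{\otimes n} = \eta^{\otimes n}$.

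\textbf{Main obstacle.} The genuine mathematical content is minimal once one accepts Proposition~\ref{prop:PA_to_FC}; the proof is essentially a bookkeeping translation between two ways of expressing naturality. The only subtle point is checking that the two constructions above are mutually inverse on the nose (not merely up to a further equivalence), which amounts to verifying that evaluating $\tilde\eta$ at the object $X \in P_2$ returns $\eta$, and conversely that the $\tilde\eta$ built from $\tau_X$ agrees with $\tau$ on all projections $\pi$. The latter requires unpacking how $\tau_\pi$ is determined by $\tau_{X^{\otimes n}}$ via the inclusion $\pi \hookrightarrow X^{\otimes n}$ and the naturality of $\tau$; this is precisely the argument of \cite[Theorem 2.4]{1607.06041} applied one categorical level up, and so no new idea is required.
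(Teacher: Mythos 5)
Your proposal is correct and follows essentially the same route as the paper: the key step in both is that a pivotal natural isomorphism of based functors is determined by its component on $X$, which (using that both functors preserve $\varepsilon_X$, hence $\eta\eta^*=\id$) is exactly a planar algebra natural isomorphism. The paper only writes out the direction you call ``from pivotal natural isomorphisms to planar algebra ones'' (to identify the kernel of the surjection from Proposition~\ref{prop:PA_to_FC}), leaving the converse extension $\tilde\eta_\pi = \psi(\pi)\cdot\eta^{\otimes n}\cdot\phi(\pi)$ implicit, so your write-up is simply a more complete version of the same argument.
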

\begin{proof}
To complete this claim we need to show that everything in the kernel of the map in Proposition~\ref{prop:PA_to_FC} is naturally isomorphic to the identity planar algebra map. Let $F$ be a planar algebra automorphism of $P$ that maps to the identity functor of $C_P$. That is there exists a natural isomorphism $\mu : F \simeq \text{Id}_{C_P}$. Let $\eta$ be the component of $\mu$ on the distinguished generating object of $C_P$ i.e the single strand. It follows from the earlier discussion that $\eta$ is a planar algebra natural isomorphism from $F$ to the identity morphism on $P$.
\end{proof}

Planar algebra natural isomorphisms should also make sense for oriented planar algebras. We leave it up to a motivated reader to work out the details.

\bibliography{bibliography} 
\bibliographystyle{plain}
\end{document}